\newcommand{\MMM}{\mathcal{M}}   
\newcommand{\SSS}{\mathcal{S}}   
\newcommand{\Sall}{\SSS_{\text{all}}}  
\newcommand{\DDD}{\mathcal{D}}   
\newcommand{\GGG}{\mathcal{G}}   
\newcommand{\NN}{\mathbb N}
\newcommand{\esper}{\mathbb{E}}
\newcommand{\One}{\mathbb{1}}
\newcommand{\nonp}{{\scriptscriptstyle{\mathrm{not}}{\oplus}}}
\newcommand{\cR}{\mathcal{R}}
\newcommand{\lufT}{\mathfrak{T}^{\bullet,\text{\tiny luf}}}
\newcommand{\lufTD}{\mathfrak{T}^{\bullet,\text{\tiny luf}}_{\mathcal D}}
\newcommand{\lufPT}{\mathfrak{P}^{\bullet,\text{\tiny luf}}} 
\newcommand{\Proba}{\mathbb{P}}
\newcommand{\E}{\mathbb{E}}
\newcommand{\V}{\mathbb{V}}
\newcommand{\RR}{\mathbb{R}}
\newcommand{\Vint}{V_{\text{int}}}
\newcommand{\Seq}{ \textsc{Seq}}
\newcommand{\PackedTree}{P}
\newcommand{\Uinf}{\mathcal{U}_{\infty}^{\bullet}}
\newcommand{\Sr}{\SG_{\bullet}}
\newcommand{\Sri}{\tilde{\SG_{\bullet}}}
\newcommand{\Z}{\mathbb{Z}}
\newcommand{\Asi}{A_{\nu,i}}
\newcommand{\leqsi}{\preccurlyeq_{\nu,i}}
\newcommand{\coc}{c\text{-}occ}
\newcommand{\setTkt}{\mathcal T_{k,\Omega}^{[t]}}
\newcommand{\setTktn}{\mathcal T_{k,\Omega}^{[t_n]}}
\newcommand{\setToh}{\mathcal T_{1,\{0\}}^{[h]}}
\newcommand{\setTth}{\mathcal T_{2,\{0\}}^{[h]}}
\def\N{N}
\newcommand\restr[2]{{%
		\left.\kern-\nulldelimiterspace %
		#1 %
		\right|_{#2} %
}}%
\newcommand{\Av}{\mathrm{Av}}
\newcommand{\cC}{\mathcal{C}}
\newcommand{\myvec}[1]{\vec{#1}} 
\newcommand{\rv}[1]{\mathbf{#1}} 
\newcommand{\cS}{\mathcal{S}}
\newcommand{\cP}{\mathcal{P}}
\newcommand{\convdis}{\,{\buildrel \mathrm{d} \over \longrightarrow}\,}
\newcommand{\convp}{\,{\buildrel \mathrm{P} \over \longrightarrow}\,}
\newcommand{\eqdist}{\,{\buildrel \mathrm{d} \over =}\,}
\newcommand{\SG}{\mathfrak{S}}
\newcommand{\Prb}[1]{\mathbb{P}\left(#1\right)}
\DeclareMathOperator{\socc}{occ}
\DeclareMathOperator{\occ}{\widetilde{occ}}
\DeclareMathOperator{\pat}{pat}
\DeclareMathOperator{\Occ}{Occ}
\DeclareMathOperator{\size}{size}
\DeclareMathOperator{\dec}{dec}
\DeclareMathOperator{\Sh}{Sh}
\DeclareMathOperator{\Lab}{Lab}
\DeclareMathOperator{\height}{ht}
\DeclareMathOperator{\CanTree}{CT} 
\DeclareMathOperator{\Pack}{PA} 
\DeclareMathOperator{\DF}{DF} 
\DeclareMathOperator{\DT}{DT} 
\DeclareMathOperator{\RP}{RP} 
\DeclareMathOperator{\sgn}{sgn} 
\begin{document}



\section{Introduction}
\label{sec:intro}

\subsection{Uniform random permutations in classes: some background and overview of our results}
We assume some familiarity of the reader with basic definitions of permutation patterns and permutation classes,
\emph{i.e.,} what is a pattern, an occurrence and a consecutive occurrence, a class, its basis, \dots~If needed,
the definitions of these notions are given at the end of the introduction.

Permutation classes are classically studied from an enumerative point of view,
\emph{i.e.,} one wants to compute the number of permutations of any fixed size in a given class
or the generating function of the class (possibly refining according to some statistics).
In recent years, there has also been an increasing interest
in the behaviour of a large typical permutation taken in a given permutation class.
We refer for example to \cite{Borga2019,MR3632417,MR3704772,MR3894923, doi:10.1002/rsa.20806,Ja_multiple,MR3176717,pinsky} 
for results on random $\tau$-avoiding permutations with $\tau$ of size $3$.
Other specific classes (or sets of permutations) have been studied: permutations avoiding a monotone pattern of any size \cite{HRSinpreparation}, separable permutations \cite{MR3813988}, 
square permutations \cite{borga2019almost,borga2019square}, doubly alternating Baxter permutations \cite{MR3238333}. 
The recent paper \cite{bassino2017universal} by Bassino, Bouvel, F{\'e}ray, Gerin, and  Maazoun uses singularity analysis methods 
to study random permutations from {\em substitution-closed} classes satisfying some analytic assumptions.

The present work takes a probabilistic approach to the analysis of random permutations from substitution-closed classes.
We establish a novel encoding of these permutations as decorated conditioned monotype Galton--Watson forests, hence integrating them into the framework of random enriched trees and tree-like structures introduced by Stufler~\cite{stufler2016limits,StEJC2018}.
This yields a unified and powerful way for describing their asymptotic shape on a global and local scale.
\begin{itemize}
	\item 
	As a first application we give a new proof of the main scaling limit result of \cite{bassino2017universal, MR3813988} by using an extension of Aldous' skeleton decomposition~(see \cite{MR1207226} for Aldous' original
	statement, and \cref{le:semilocal} for our extension).
	This new proof %
	works under weaker conditions
	and makes transparent the connection to random trees which was suggested, but unclear, in \cite{bassino2017universal}
	(see in particular Remark 1.11 or the beginning of Section 1.7 there).
	In particular,
	our proof yields a probabilistic interpretation of the conditions under which
	this scaling limit result holds (see \cref{ssec:intro_Conditions}).
	\item 
	Our second main contribution is a novel quenched local limit for random permutations from substitution-closed classes. Here we use fringe subtree count asymptotics and the  skeleton decomposition to describe a concentration phenomenon for consecutive patterns. This notion of convergence has recently been introduced by Borga in \cite{Borga2019}, where such limits were proven for random permutations avoiding patterns of length $3$.
\end{itemize}

The rest of the introduction defines substitution-closed classes and
provides details on our results and on the approach used in this paper.

\subsection{Substitution of permutations and closed classes}
To define the substitution operation, it is convenient to think of permutations 
as diagrams. That is, if $n$ denotes the size of a permutation~$\nu$, we may identify $\nu$ with the set of points $(i,\nu(i))$ (for $i$ in $[n]$).
The substitution $\theta[\nu^{(1)},...,\nu^{(d)}]$, where $\theta, \nu^{(1)}, \ldots, \nu^{(d)}$ are permutations and $d$ is the size of $\theta$,
is then obtained as follows. For each $i$,
we first replace the point $(i,\theta(i))$ with the diagram of $\nu^{(i)}$.
Then rescaling the rows and columns yields the diagram of a bigger permutation,
which is by definition $\theta[\nu^{(1)},...,\nu^{(d)}]$.
A permutation of size greater than $2$ is called {\em simple}
if it cannot be obtained as the substitution of smaller permutations.
An example of substitution is given in \cref{fig:Subs}.
\begin{figure}[htbp]
	\centering
	\includegraphics[height=2.5cm]{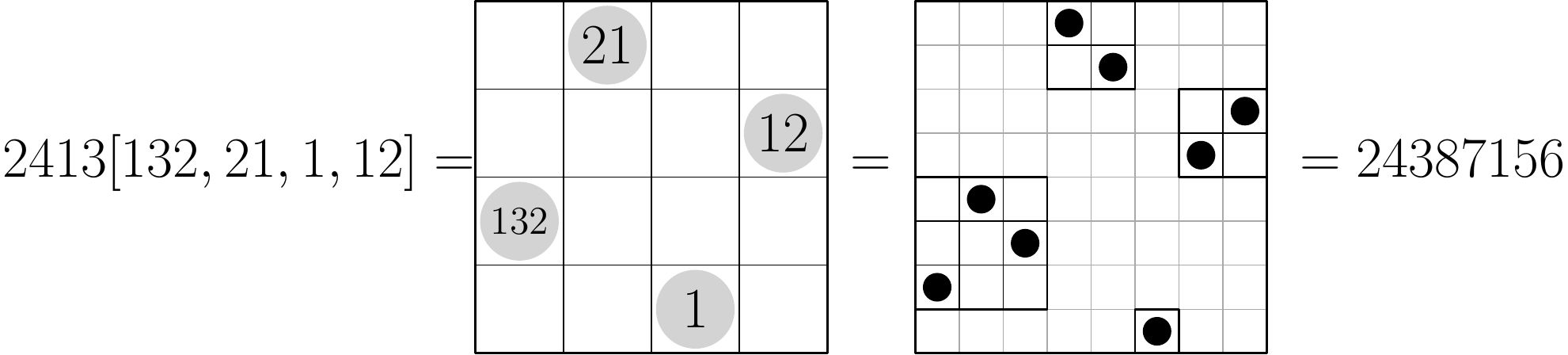}
	\caption{Example of substitution of permutations}
	\label{fig:Subs}
\end{figure}

As said above, this article considers {\em substitution-closed classes} of permutations,
\emph{i.e.,} classes $\mathcal C$ such that $\theta,  \nu^{(1)},...,\nu^{(d)} \in \mathcal C$
implies $\theta[\nu^{(1)},...,\nu^{(d)}] \in \mathcal C$.
Alternatively, a class is substitution-closed
if and only if its basis (\emph{i.e.,} the avoided patterns defining the class)
consists only of simple permutations.
In particular, there are uncountably many substitution-closed permutation classes.
Due to their nice combinatorial structure (see \cref{sec:bijections}),
substitution-closed permutation classes are a nice general framework,
where to investigate the properties of uniform random elements.

We note that a substitution-closed class $\mathcal C$ is entirely determined by the set $\mathcal S$
of simple permutations in it (see~\cref{prop:treesOfSubsClosedClasses}).
We consider this set $\mathcal S$ as the data of our problem, 
and the goal is, under various conditions on $\mathcal S$
to obtain convergence results for
uniform random permutations in the class $\mathcal C$.
These conditions will typically be expressed in terms of the generating functions
of $\mathcal S$, that we conveniently also denote $\mathcal S$.
From Stanley-Wilf-Marcus-Tard\"os' theorem \cite{MR2063960},
it always has a positive radius of convergence $\rho_\cS>0$
(except in the trivial case where $\mathcal C$ is the set $\SG$ of all permutations, 
which we exclude from now on; permutation classes different from $\SG$ are called \emph{proper}).

\subsection{Permuton convergence of substitution-closed classes}
The notion of permutons was introduced in \cite{MR2995721} to describe limits of permutation sequences.
Formally, a permuton is a probability measure on the unit square $[0,1]^2$,
whose projection on each axis is the Lebesgue measure on $[0,1]$
(we say that the measure has uniform marginals).
Permutations can be seen as permutons by considering the rescaled diagrams;
we will denote $\mu_\nu$ the permuton associated with the permutation $\nu$.
The weak topology on measures gives then a natural meaning to the convergence
of a sequence of permutations to a given permuton.
A nice feature is that the convergence in terms of permutons
is equivalent to the {\em convergence of pattern proportions}.
We refer to \cite[Section 2]{bassino2017universal} for details.

Some specific permutons have been described as limits of permutation classes, as in~\cite[Chapter 6]{bevanThesis}, \cite{borga2019square} and~\cite{MR3813988,bassino2017universal,bassino2019finitelyGenerated}.
Among these, the {\em biased Brownian separable permuton} $\bm{\mu}^{(p)}$ of parameter $p$
is a random permuton, constructed from a Brownian excursion and independent signs associated
with its local minima, see Maazoun~\cite{maazoun17BrownianPermuton}.
It was proved in \cite{bassino2017universal, MR3813988}  that this
is a universal limiting object for substitution-closed permutations classes,
in the sense that uniform random permutations in many substitution-closed classes
converge to $\bm{\mu}^{(p)}$, for some $p$.
In this article, we give a new proof of this theorem that is based on an extension of Aldous' skeleton decomposition~\cite{MR1207226} and the framework of random enriched trees and tree-like structures~\cite{stufler2016limits,StEJC2018}.
\begin{theorem}
	\label{thm:scaling_intro}
	Let $\bm{\nu}_n$ be the uniform $n$-sized permutation from a proper substitution-closed class of permutations $\cC$. Suppose that
	\begin{equation}
	\label{eq:S_ExpMoments}
	\cS'(\rho_\cS) > \frac{2}{(1 +\rho_\cS)^2} -1,
	\end{equation}
	or
	\begin{equation}
	\label{eq:S_FiniteVariance}
	\cS'(\rho_\cS)  = \frac{2}{(1 +\rho_\cS)^2} -1 \qquad \text{and} \qquad 	\cS''(\rho_\cS)  < \infty.
	\end{equation}
	Then
	\[
	\bm{\mu}_{\bm{\nu}_n} \convdis \bm{\mu}^{(p)},
	\]
	with $\bm{\mu}^{(p)}$ denoting the biased Brownian separable permuton 
	with an explicit parameter given by \cref{eq:parm_p} page \pageref{eq:parm_p}. This includes the case of uniform separable permutations, for which $\cS = \emptyset$ and $p=1/2$.
\end{theorem}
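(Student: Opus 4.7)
The plan is to leverage the decorated-tree encoding advertised in the abstract to transfer the problem into the world of conditioned Galton--Watson forests. As a first step, I would use the bijection sending a permutation $\nu \in \cC$ of size $n$ to its canonical substitution tree, whose internal nodes carry labels in $\{\oplus,\ominus\}\cup\cS$ and whose $n$ leaves index the points of $\nu$. Summing out the decorations at each internal node, the uniform distribution on $\cC_n$ becomes a simply generated (equivalently, conditioned Galton--Watson) distribution on forests of size $n$ with weight sequence $(w_k)_{k\ge 0}$ built from the generating function $\cS$; the weights encode how many ways a node of arity $k$ can be decorated by a sign or a simple permutation of size $k$.

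Next, the analytic hypotheses \eqref{eq:S_ExpMoments} and \eqref{eq:S_FiniteVariance} should be translated into statements about the offspring law of this Galton--Watson forest. The point of the inequality in \eqref{eq:S_ExpMoments} is that it guarantees the existence of an exponential tilt making the weight sequence critical and within the radius of convergence of the generating function, which in turn provides finite exponential moments; the boundary case \eqref{eq:S_FiniteVariance} yields criticality with finite variance. In either alternative, Aldous' invariance principle applies and the rescaled contour function of the conditioned forest converges to a normalized Brownian excursion $e$.

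To promote tree convergence into permuton convergence, I would invoke the now-standard criterion that the permuton limit of $\bm{\nu}_n$ is determined by the joint law of the patterns induced on $k$ uniformly random positions, for each fixed $k$. Under the bijection these $k$ points correspond to $k$ uniform leaves of the decorated forest, and the induced pattern depends on two data: the combinatorial shape of the reduced subtree spanned by these leaves, and the decorations sitting at its $k-1$ branching internal nodes. Aldous' skeleton decomposition yields the limit of the first datum (a CRT-distributed $k$-tree whose edge lengths are the standard excursion span distances). The semi-local refinement promised as \cref{le:semilocal} in the paper should provide the limiting law of the decorations at the branch points, showing that they become asymptotically i.i.d.\ with distribution $\{\oplus,\ominus\}$ and bias $p$ given by the expression announced in \eqref{eq:parm_p}; Maazoun's construction then identifies the joint limit with $\bm{\mu}^{(p)}$.

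The main obstacle I expect lies precisely in the decoration step. The branch points along the skeleton of a conditioned Galton--Watson tree are exactly its high-degree vertices, whose local environments do not Benjamini--Schramm converge in the usual sense but instead require a semi-local analysis that tracks the out-degrees of several such macroscopic nodes simultaneously. Proving that at the joint level these decorations (i) are asymptotically independent, (ii) concentrate on the $\oplus/\ominus$ labels so that the contribution of the genuine simple-permutation labels from $\cS$ vanishes in the pattern extracted, and (iii) have the correct asymptotic bias $p$, is the delicate part. In the critical finite-variance case the computation of $p$ reduces to evaluating a generating function ratio at $\rho_\cS$; in the case \eqref{eq:S_ExpMoments} one first shifts to the Boltzmann-tilted critical law, carries out the same analysis there, and checks that the unshifted pattern statistics coincide with the tilted ones in the limit because the tilt only reweights a bounded number of large vertices whose decorations have been already controlled.
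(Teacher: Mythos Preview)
Your high-level strategy is right---reduce permuton convergence to the distribution of patterns induced by $k$ random leaves, then control the reduced skeleton and the signs at its branch points via Aldous' decomposition---but there are two concrete gaps.

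First, the canonical substitution tree is \emph{not} simply generated. Because canonical trees forbid $\oplus$--$\oplus$ and $\ominus$--$\ominus$ edges, the decoration of a vertex constrains the decorations of its children, and summing out the decorations does not produce a product weight $\prod_v w_{d^+(v)}$. The resulting random tree is a conditioned \emph{multitype} Galton--Watson tree, for which the skeleton-decomposition tools you invoke are not available at the needed precision. The paper circumvents this by introducing a further ``packing'' bijection that merges each $\cS$-decorated vertex with its $\oplus$-children into a single vertex carrying a composite decoration (an $\cS$-gadget); this kills the sign-alternation constraint and yields a genuine monotype simply generated tree, on which Aldous' machinery applies directly.

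Second, your claim (ii) is wrong as stated: the simple-permutation decorations do \emph{not} vanish at the branch points. A branch point in the (packed) skeleton receives offspring according to the doubly size-biased law $\xi^*$, and its decoration is with positive probability an $\cS$-gadget. When that happens, the sign recorded by the two marked leaves is the pattern they induce inside that simple permutation; this contributes directly to the formula for $p$ through the term $\Occ_{12}(\kappa)$ in \eqref{eq:parm_p}. When the branch point carries a $\circledast$ instead, one must read off the sign from the parity of the distance to the nearest $\cS$-decorated ancestor---this is where the semi-local extension in \cref{le:semilocal} (with $t_n\to\infty$) is genuinely needed, and it is also why the separable case $\cS=\emptyset$ (where no such ancestor exists) requires a separate argument tracking parities all the way to the root via \cref{cor:semilocal3}.
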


Specifically, the result~\cite[Thm. 1.2]{MR3813988} corresponds to the special case where $\cS= \emptyset$, and \cite[Thm. 1.10]{bassino2017universal} corresponds to the special case where \cref{eq:S_ExpMoments} is satisfied. The result~\cite[Thm. 7.8]{bassino2017universal} corresponds to the case where \cref{eq:S_FiniteVariance} is satisfied and additionally $\cS(z)$ is amendable to singularity analysis.

\subsection{Local convergence: a concentration phenomenon for substitution-closed classes}
In addition to scaling limits,
our decorated tree approach also allows us to obtain local limit results 
for uniform random permutations in substitution-closed classes.
For this, we use a local topology for permutations recently defined by Borga
in \cite{Borga2019}. This topology is the analogue of the celebrated Benjamini--Schramm convergence
for graphs, in the sense that we look at the neighbourhood of a {\em random element} of the permutation.
Pleasantly, convergence for this local topology is equivalent to 
the {\em convergence of consecutive pattern proportions}.

For convenience, we present our results in term of consecutive patterns.
For a permutation $\nu$ and a pattern $\pi$, 
we denote by $\coc(\pi,\nu)$ the number of consecutive occurrences of a pattern $\pi$ in $\nu$;
for instance, for $\pi=21$ (resp. $\pi=321$), these are the number of descents (resp. double-descents)
in the permutation. 
\begin{theorem}
	\label{thm:local_intro}
	Let $\mathcal{C}$ be a proper substitution-closed permutation class and assume that
	\begin{equation}
	\label{eq:S_TypeI}
	\cS'(\rho_\cS) \ge \frac{2}{(1 +\rho_\cS)^2} -1.
	\end{equation}
	For each $n \in \NN$, we consider a uniform random permutation $\bm{\nu}_n$ of size $n$
	in $\mathcal{C}$.
	Then, for each pattern $\pi \in \mathcal C$,
	there exists $\gamma_{\pi,\mathcal C}$ in $[0,1]$ such that
	\begin{equation}
	\tfrac1n \, \coc(\pi,\bm{\nu}_n) \stackrel{P}{\longrightarrow}  \gamma_{\pi,\mathcal C}.
	\end{equation}
\end{theorem}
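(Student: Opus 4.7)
The plan is to combine the bijective encoding of permutations in $\mathcal{C}$ as decorated trees (the main topic of this paper, foreshadowed in the introduction and detailed in \cref{sec:bijections}) with Benjamini--Schramm type convergence results for conditioned critical Galton--Watson forests. Under the encoding, the uniform permutation $\bm{\nu}_n$ corresponds to a forest $\bm{F}_n$ whose internal nodes carry decorations ($\oplus$, $\ominus$, or a simple permutation in $\mathcal{S}$) and whose $n$ leaves, read left to right, represent the positions of $\bm{\nu}_n$. Condition \eqref{eq:S_TypeI} is precisely the criticality condition under which the offspring law of $\bm{F}_n$ can be chosen critical, so that $\bm{F}_n$ is (in distribution) a critical Galton--Watson forest conditioned to have $n$ leaves, with i.i.d.\ decorations on its internal nodes.

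Next, I would observe that the consecutive-pattern count is, up to a small error, a local functional on the tree side. The pattern formed by $k$ consecutive leaves of $\bm{F}_n$ is determined by the chain of last common ancestors of consecutive pairs, together with the decorations of these ancestors and the sibling orders of the relevant branches. For each truncation radius $R$, this information is contained in the ball of radius $R$ around the $i$-th leaf of $\bm{F}_n$, except on an event $A_{i,R}$ corresponding to some required common ancestor being further than $R$ away. One therefore obtains a measurable set $E_{\pi,R}$ of decorated pointed neighbourhoods such that
$$
\Bigl| \coc(\pi,\bm{\nu}_n) - \sum_{i=1}^{n} \One\bigl[B_R(i,\bm{F}_n)\in E_{\pi,R}\bigr] \Bigr| \;\le\; \sum_{i=1}^{n}\One\bigl[A_{i,R}\bigr],
$$
where $B_R(i,\bm{F}_n)$ denotes the radius-$R$ decorated neighbourhood of the $i$-th leaf of $\bm{F}_n$. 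Standard local convergence of conditioned critical Galton--Watson forests (with i.i.d.\ marks) around a uniformly chosen leaf then yields, for each fixed $R$,
$$
\tfrac{1}{n}\sum_{i=1}^{n}\One\bigl[B_R(i,\bm{F}_n)\in E_{\pi,R}\bigr] \;\convp\; q_{\pi,R},
$$
with an explicit $q_{\pi,R}$ given by a probability on the local limit forest; moreover $q_{\pi,R}$ is monotone in $R$ and converges as $R \to \infty$ to a constant which we take as the definition of $\gamma_{\pi,\mathcal{C}}$.

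The main obstacle is the uniform-in-$n$ tail control of the error term: two adjacent leaves can have an arbitrarily high common ancestor, so the radius needed to decipher a consecutive pattern from the tree is not uniformly bounded. Quantifying that $\E\bigl[\tfrac{1}{n}\sum_i \One[A_{i,R}]\bigr] \to 0$ as $R \to \infty$, uniformly in $n$, rests crucially on the critical Galton--Watson structure of $\bm{F}_n$ guaranteed by \eqref{eq:S_TypeI}. Once this tightness is in place, a standard triangle-inequality argument combining the three ingredients gives the desired convergence in probability to $\gamma_{\pi,\mathcal{C}}$.
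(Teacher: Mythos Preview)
Your overall strategy---encode as decorated trees, prove quenched local convergence around a uniform leaf, and read off convergence of $\widetilde{\coc}(\pi,\cdot)$---is exactly the paper's. But there is a genuine gap in the encoding step that the paper spends considerable effort resolving.

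You describe $\bm F_n$ as having internal decorations in $\{\oplus,\ominus\}\cup\mathcal S$ and then assert that it is a critical monotype Galton--Watson forest with i.i.d.\ decorations. These two claims are incompatible: the decorations you list are those of the \emph{canonical} tree, and canonical trees carry the constraint that no $\oplus$-vertex is the child of a $\oplus$-vertex (and likewise for $\ominus$). This constraint makes the decorations dependent and the underlying tree a \emph{multitype} Galton--Watson tree, for which the ``standard local convergence around a uniformly chosen leaf'' you invoke is not available off the shelf (the paper notes this explicitly in the introduction). The paper's fix is the \emph{packed} tree encoding of \cref{sect:Packed_decomposition_trees}: merging each simple-decorated vertex with its $\oplus$-children collapses the sign alternation into $\circledast$-decorations, yielding a genuine monotype Galton--Watson tree with i.i.d.\ $\widehat{\mathcal G(\mathcal S)}$-decorations.

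This fix, however, breaks the purely local pattern-reading you rely on. With packed trees, determining whether two leaves form an inversion when their common ancestor is $\circledast$-decorated requires locating the nearest ancestor decorated by an $\mathcal S$-gadget (see \cref{ssec:patterns_subtrees}). When $\mathcal S\neq\emptyset$ this ancestor is at stochastically bounded distance and your $A_{i,R}$-style tail argument goes through (the paper packages this as continuity of a map $\RP$, \cref{prop:continuity_RP,prop:prob_discontinuity_RP}). But when $\mathcal S=\emptyset$ (separable permutations) there are no $\mathcal S$-gadgets at all: the sign at the common ancestor depends on the \emph{parity of the height of the leaf}, a quantity no finite neighbourhood determines. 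Your error event $A_{i,R}$ then has probability bounded away from zero for every $R$, and the truncation argument fails. The paper handles this case separately (\cref{prop:quenched_conv_tree_Plus_Signs}), using the skeleton decomposition of \cref{sec:skeleton} to show that this parity converges jointly with the local neighbourhood to an independent fair coin flip; note this step genuinely uses finite variance of $\xi$, which holds automatically for the separable class.
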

We note that the hypothesis made in this theorem is slightly weaker than that for scaling limits. The theorem shows the convergence
of all random variables $\tfrac1n \, \coc(\pi,\bm{\nu}_n)$
to deterministic constants,
revealing a "concentration" phenomenon in substitution-closed class
under hypothesis \eqref{eq:S_TypeI}.
The constants $\gamma_{\pi,\mathcal C}$ can be constructed from local limits of 
conditioned Galton--Watson trees around a random leaf, see \cref{sec:local}
and in particular \cref{rk:gammas}.
They depend both on the pattern $\pi$ and on the class $\mathcal C$.

\subsection{Proof methodology}
Start with a permutation $\nu$ of size $n \ge 2$.
If it is not simple nor monotone\footnote{By definition, for $k \ge 2$, there are exactly two monotone permutations
	of size $k$: the monotone increasing one $12 \cdots k$ and the monotone decreasing one $k \cdots 21$.}, it can be written as $\theta[\nu^{(1)},...,\nu^{(d)}]$, for some smaller
permutations $\theta, \nu^{(1)},...,\nu^{(d)}$.
We can iterate this decomposition on $\theta, \nu^{(1)},...,\nu^{(d)}$: as long as they are not simple nor monotone, we decompose them further through substitution.
The result is a representation of $\nu$ as a tree with $n$ leaves,
whose internal vertices are decorated by monotone or simple permutations.
We call positive (resp. negative) a decoration 
with a monotone increasing (resp. decreasing) permutation.
From a result of Albert and Atkinson \cite{albert2005simple},
the decorated tree representation of a permutation is unique
if we forbid the children of a vertex with a positive (resp. negative) decoration  
to have themselves a positive (resp. negative) decoration.
The resulting tree is called the {\em canonical tree} of the permutation.
Details on this construction, standard in the permutation pattern literature,
are given in \cref{sec:DecoTrees}.
\medskip

From a probabilistic point of view, one wants to consider the tree associated with a uniform
random permutation in a class $\mathcal C$ and possibly to recognize some standard tree models.
One can show that, if the class is substitution-closed,
the associated random canonical tree is a {\em multitype} random Galton--Watson tree
with some specific offspring distribution conditioned on having $n$ leaves.
The need to have several types comes from the condition 
on positive and negative decorations:
this forces us to consider children of positively and negatively decorated vertices
to be of a different type from other vertices in the tree.

Results on conditioned multitype Galton--Watson trees do exist in the literature:
in particular, there are some scaling limit results under finite or infinite variance assumptions
\cite{MR3748121,deraphelis2017,MR2469338}, and local limit results around the root
\cite{MR3803914,MR3769811} for such trees.
Nevertheless these results do not cover our needs.
\begin{itemize}
	\item For the scaling limit results on permutations,
	we need information on the type and outdegree of the closest common ancestors of randomly selected leaves
	(while tree scaling limit results only give information on the genealogy of such leaves).
	\item For the local limit results on permutations,
	we need some local limit results around a random leaf, and not around the root. For studying local convergence of random separable permutations we additionally require joint convergence with the parity of the height of the leaf.
\end{itemize}
We therefore do not use this encoding as multitype Galton--Watson trees,
but rather provide a novel encoding of random permutations in substitution-closed classes
as {\em decorated monotype Galton--Watson forests}. That is, random plane forests where each vertex is enriched with an independent local structure. This integrates the random permutations naturally into the framework of random tree-like structures~\cite{stufler2016limits}.
\medskip

To identify permutations with decorated forests, 
we first note that a generic permutation is the $\oplus$-sum of an ordered sequence of $\oplus$-indecomposable permutations, 
\emph{i.e.,} of permutations which cannot be obtained as a substitution $12[\pi^{(1)},\pi^{(2)}]$
(see \cref{Th:AlbertAtkinson} below). 
We then associate to each of these $\oplus$-indecomposable permutations its canonical tree.
To those trees, we apply a packing procedure.
This packing procedure merges vertices decorated 
with a simple permutation with its children having a positive decoration.
As a consequence, we do not need anymore to distinguish between positive and negative decorations.
The resulting tree, called packed tree of the ($\oplus$-indecomposable) permutation,
is still a decorated tree with $n$ leaves,
but the decorations are now more complicated objects than permutations,
being themselves trees of permutations 
(called $\cS$-gadget below, see \cref{sect:Packed_decomposition_trees}
for details).
The advantage of this new representation is that there is no condition
on the decoration of a vertex, depending on the one of its parent. 
As a result of this construction, any permutation is represented as an ordered sequence of decorated trees,
\emph{i.e.,} an ordered decorated forest, without any constraint on the decorations (\cref{te:bijection}).
We note that this representation is a bijection from the set of all permutations
to ordered decorated forests, and could thus be of interest,
independently from its application to the study of random elements in substitution-closed classes done here.
\medskip

To study random permutations of size $n$ taken uniformly at random 
in a substitution-closed class $\mathcal C$, 
we use a result on convergent Gibbs partitions (see Stufler~\cite[Thm. 3.1]{stufler2016gibbs})
to prove that the associated ordered decorated forest contains a giant tree of size $n-O_p(1)$ (\cref{prop:giant_comp_perm} page \pageref{prop:giant_comp_perm}).
It is therefore enough to study a random decorated \emph{tree} with $n$ leaves.
Such trees have the same distribution as a {\em monotype} Galton--Watson tree $\bm T^\xi_n$ 
with a specific offspring distribution $\xi$ conditioned on having $n$ leaves.
We can therefore use results or techniques on monotype Galton--Watson trees,
which are much more developed than in the multitype case.
\begin{itemize}
	\item
	In particular, to find the scaling limit of our permutations, there are some results on the genealogy and the outdegree of common ancestors
	of randomly chosen vertices (this is implicit in the original paper of Aldous, see~\cite[Eq. (49)]{MR1207226}).
	We will refer to this as {\em Aldous' skeleton decomposition}.
	In this article we will need an extension of this, 
	considering also local neighbourhood of the common ancestors (being therefore {\em semi-local})
	and allowing to condition on the number of leaves instead of the number of vertices.
	\cref{le:semilocal} provides a general result to this effect,
	allowing to condition on the number of vertices
	with arity in any given set $\Omega \subseteq \mathbb \NN_0$ satisfying $\mathbb{P}(\xi \in \Omega)>0$.
	\item
	The literature also contains results on the number of (extended) fringe subtrees of $\bm T^\xi_n$ (and related models) 
	isomorphic to a given tree \cite{MR1102319,holmgren2017,janson2012simply,stufler2016limits,stufler2019,stufler2019offspring}.
	When $\xi$ is critical, such results may be translated to local limit results for $\bm T^\xi_n$, pointed at a random leaf
	(see \cref{prop:quenched_conv_tree}).
	We shall however need and will prove a slightly stronger result when $\xi$ is critical and additionally has finite variance,
	taking also into account the parity of the height of the pointed leaf
	(see \cref{prop:quenched_conv_tree_Plus_Signs} page \pageref{prop:quenched_conv_tree_Plus_Signs}).
\end{itemize}
\medskip

The last step of the proofs (both in the scaling and local limit cases)
is to translate the results on the packed trees to results on the permutation 
$\bm \nu_n$ itself.
A difficulty here arises from the identification of positive and negative decorations
in the packing construction. To invert this construction,
and recover the correct signs on the decorated trees, we need to determine the distance to the closest
ancestor decorated with a simple permutation.
When $\mathcal S \ne \emptyset$, this ancestor is at a stochastically bounded distance, so that
this inversion procedure is still local.
However, when $\mathcal S= \emptyset$, \emph{i.e.,} in the case of {\em separable permutations},
there is no such ancestor and we need to go all the way to the root to invert the packing construction.
This creates an extra difficulty, that we overcome by using a local limit theorem for the length of ``bones'' in the skeleton decomposition.\medskip

\subsection{Interpretation of the various assumptions on $\cS$}~
\label{ssec:intro_Conditions}
Our assumptions on $\cS$ might seem artificial but they are in fact very natural,
after having introduced the above representation of permutations
as decorated conditioned Galton--Watson forests.
Namely
\begin{itemize}
	\item \cref{eq:S_TypeI} is equivalent to the fact that the Galton--Watson tree model is critical;
	\item \cref{eq:S_ExpMoments} asks in addition that the offspring distribution has small exponential moments;
	\item finally, \cref{eq:S_FiniteVariance} means that the offspring distribution has no exponential moments,
	but finite variance.
\end{itemize}
Such hypotheses are classical in the analysis of conditioned Galton--Watson trees, and give a probabilistic meaning to the conditions used in~\cite{bassino2017universal}. In terms of substitution-closed classes, the small exponential moment condition
is satisfied for most classes in the literature, see the discussion in 
\cite[Section 1.4]{bassino2017universal}.
Although general classes satisfying \cref{eq:S_TypeI} but not \cref{eq:S_ExpMoments} have also been studied in previous works~\cite[Sec. 7]{bassino2017universal}, we do not know at present whether such classes exist. 

\medskip

There is however at least one class not satisfying \cref{eq:S_TypeI}: the class $\Av(2413)$.
The packed forest associated with a uniform random permutation $\bm \nu_n$ in this class
has the distribution of a decorated conditioned Galton--Watson forest 
with a {\em subcritical offspring distribution}.
It will therefore contain with high probability a unique vertex with macroscopic degree (see \cite{janson2012simply,MR2764126,MR3335012,stufler2019offspring}).
This  vertex is decorated with a large simple permutation $\bm \alpha_n$ in the class
and the scaling (resp. local) limit of $\bm \nu_n$ could be described \emph{if} one knew that of~$\bm \alpha_n$.
In the current state of the art,
studying a uniform random {\em simple} $\bm \alpha_n$ in $\Av(2413)$
does not seem to be a simpler problem than the original one of studying $\bm \nu_n$, hence this approach appears to be ineffective for $\Av(2413)$.
\bigskip

\subsection{Outline of the paper}
The paper is organized as follows. The rest of the introduction sets up some notation.
\cref{sec:bijections} presents the combinatorial construction used in this paper,
that is the canonical tree and packed forest associated with a permutation.
\cref{sec:indecomposable_GW} identifies the packed forest associated
with a uniform random permutation
in a substitution-closed class as a conditioned monotype Galton--Watson forest.
We also discuss the existence of a giant tree in such a forest.
In \cref{sec:skeleton}, we state and prove our improvement of Aldous' skeleton decomposition.
The last two sections are devoted to the proofs of the main theorems:
\cref{sec:scaling} for the scaling limit result (\cref{thm:scaling_intro}) and \cref{sec:local} for the local limit result (\cref{thm:local_intro}).

\subsection{Permutation patterns and permutation classes: basic definitions and notation}
We let $\NN_0 = \{0, 1, \ldots\}$ denote the collection of non-negative integers and $\NN = \{1,2 \ldots\}$ the collection of strictly positive integers. For any $n\in\NN,$ we denote the set of permutations of $[n]:=\{1,2,\dots,n\}$ \label{def:SG} by $\SG^n.$ We write permutations of $\SG^n$ in one-line notation as $\nu=\nu(1)\nu(2)\dots\nu(n).$ For a permutation $\nu\in\SG^n$ the \emph{size} $n$ of $\nu$ is denoted by $|\nu|.$ We let $\SG\coloneqq\bigcup_{n\in\NN}\SG^n$ be the set of finite permutations. We write sequences of permutations in $\SG$ as $(\nu_n)_{n\in\NN}.$ 

We will often view a permutation $\nu$ as its diagram, which is (as said earlier -- see also the right part of \cref{fig:Subs}) the set of points of the Cartesian plane at coordinates $(j,\nu(j))$.

If $x_1\dots x_n$ is a sequence of distinct numbers, let $\text{std}(x_1\dots x_n)$ be the unique permutation $\pi$ in $\SG^n$ that is in the same relative order as $x_1\dots x_n,$ \emph{i.e.}, $\pi(i)<\pi(j)$ if and only if $x_i<x_j.$
Given a permutation $\nu\in\SG^n$ and a subset of indices $I\subseteq[n]$, let $\text{pat}_I(\nu)$ be the permutation induced by $(\nu(i))_{i\in I},$ namely, $\text{pat}_I(\nu)\coloneqq\text{std}\big((\nu(i))_{i\in I}\big).$
For example, if $\nu=87532461$ and $I=\{2,4,7\}$ then $\text{pat}_{\{2,4,7\}}(87532461)=\text{std}(736)=312$. 

Given two permutations, $\nu\in\SG^n$ for some $n\in\NN$ and $\pi\in\SG^k$ for some $k\leq n,$ we say that $\nu$ contains $\pi$ as a \textit{pattern} (and we write $\pi\leq\nu$) 
if $\nu$ has a subsequence of entries order-isomorphic to $\pi,$ that is, if there exists a \emph{subset} $I\subseteq[n]$ such that $\text{pat}_I(\nu)=\pi$.
Denoting $i_1, i_2, \dots, i_k$ the elements of $I$ in increasing order, the subsequence $\nu(i_1) \nu(i_2) \dots \nu(i_k)$ is called an \emph{occurrence} of $\pi$ in $\nu$. 
In addition, we say that $\nu$ contains $\pi$ as a \textit{consecutive pattern} if $\nu$ has a subsequence of adjacent entries order-isomorphic to $\pi$, 
that is, if there exists an \emph{interval} $I\subseteq[n]$ such that $\text{pat}_I(\nu)=\pi$. 
Using the same notation as above, $\nu(i_1) \nu(i_2) \dots \nu(i_k)$ is then called a \emph{consecutive occurrence} of $\pi$ in $\nu$. 
All along the article, for any integers $a,b\in\Z$ (resp. $n \in \N$), the interval $[a,b]$ (any interval $I \subseteq [n]$) is to be understood as an integer interval, \emph{i.e.}, an interval contained in $\Z$.
For real numbers $a\leq b$, we use the same notation $[a,b]$ to denote the interval $[a,b] \subseteq \mathbb{R}$ 
\begin{example} 
	The permutation $\nu=1532467$ contains $1423$ as a pattern but not as a consecutive pattern and $321$ as consecutive pattern. Indeed $\text{pat}_{\{1,2,3,5\}}(\nu)=1423$ but no interval of indices of $\nu$ induces the permutation $1423.$ Moreover, $\text{pat}_{[2,4]}(\nu)=\text{pat}_{\{2,3,4\}}(\nu)=321.$
\end{example}

We say that $\nu$ \emph{avoids} $\pi$ if $\nu$ does not contain $\pi$ as a pattern. We point out that the definition of $\pi$-avoiding permutations refers to patterns and not to consecutive patterns. Given a set of patterns $B\subseteq\SG,$ we say that $\nu$ \emph{avoids} $B$ if $\nu$ avoids $\pi,$ for all $\pi\in B$. We denote by $\text{Av}^n(B)$ the set of $B$-avoiding permutations \ of size $n$ and by $\text{Av}(B)\coloneqq\bigcup_{n\in\NN}\text{Av}^n(B)$ the set of $B$-avoiding permutations of arbitrary size. 

A \emph{permutation class} $\mathcal{C}$ is a set of permutations closed under the operation of pattern-containment (\emph{i.e.,} if $\nu\in\mathcal{C}$ and $\pi\leq\nu$ then $\pi\in\mathcal{C}$). We recall that every permutation class can be rewritten as a family of pattern-avoiding permutations, \emph{i.e.,} for every permutation class $\mathcal{C}$ there exists a set of patterns $B\subseteq\SG$ such that $\mathcal{C}=\text{Av}(B).$ Note that if one permutation of $B$ is contained in another then we may remove the larger one without changing the family. Thus we may take $B$ to be an \emph{antichain}, meaning that no element of $B$ contains any others. In the case that $B$ is an antichain we call it the \emph{basis} of this family. 
We note that the basis of a class may be finite or infinite. 

\subsection{Probabilistic notation}
In order to avoid any confusion, we write random quantities using \textbf{bold characters} to distinguish them from deterministic quantities. 
Moreover, given a random variable $\bm{X},$ we denote with $\mathcal{L}(\bm{X})$ its law. Unless otherwise stated, all limits are taken as $n \to \infty$. 
Given a sequence of random variables $(\bm{X}_n)_{n\in\NN}$ we write $\bm{X}_n\convdis\bm{X}$ to denote  convergence in distribution and $\bm{X}_n\convp\bm{X}$ to denote  convergence in probability. 
We let $O_p(1)$ represent an unspecified
random variable $\bm{Y}_n$ of a stochastically bounded sequence $(\bm{Y}_n)_n$. 

Besides, the indicator of an event $A$ is denoted $\One[A]$.
Finally, the expression {\em with high probability} means {\em with probability tending to 1}
(without precision on the speed of convergence).

\subsection{Index of notation}

\cref{table:notation} summarizes some notational conventions and frequently used terminology in this paper.
In general, for a combinatorial class denoted by a curly letter, e.g. $\cP$,
we use the same letter $\cP(z)$ for its generating series, $\rho_{\cP}$ for the radius of convergence of $\cP(z)$, 
a standard uppercase letter $P$ for an object in the class,
and a lowercase letter $p_n$ with index $n \ge 0$ for the number of objects of size $n$. 
Bijections between classes will be denoted by two upper case letters, e.g. $\CanTree$ (page \pageref{def:cantree}) building the canonical tree of a permutation. 

\begin{table}[ht]
	\begin{center}
		\begin{tabular}{ll}
			$\mathfrak{S}^n$ & the set of permutations of size $n,$ page \pageref{def:SG}\\[2pt]
			$\cC$ & a substitution-closed class of permutations, page \pageref{def:substclosed}\\[2pt]
			$\cS$ & the subset of simple permutations in $\cC$, page \pageref{prop:treesOfSubsClosedClasses}\\[2pt]
			$\mathcal{T}$ & the class of canonical trees associated with $\cC$, page \pageref{def:ct}\\[2pt]
			$\cP$ & the class of packed trees associated with $\cC$, page \pageref{le:bij_perm_tree}\\[2pt]
			$\bm{\nu}_n$ & the uniform random $n$-sized permutation from $\cC$, page \pageref{def:bmnu}\\[2pt]
			$\bm{P}_n = (\bm{T}_n,\bm{\lambda}_{\bm{T}_n})$ & the uniform random packed tree with $n$ leaves, page \pageref{def:Pn}\\[2pt]
			$\mathfrak{T}^\bullet$ & the collection of (possibly infinite) pointed plane trees, page \pageref{def:mfTb}\\[2pt]
			$\lufT$ & the collection of (possibly infinite) {locally and upwards finite}\\& pointed plane trees, page \pageref{def:lufT}\\[2pt]
			$\lufTD$ & the collection of (possibly infinite) {locally and upwards finite} \\&decorated pointed plane trees, page \pageref{def:luftd}\\[2pt]
			$\lufPT$ & the collection of (possibly infinite) {locally and upwards finite }\\& pointed packed trees, page \pageref{def:pluft}%
		\end{tabular}
	\end{center}
	\caption{Table of the main notation.} \label{table:notation}
\end{table}

\section{A novel encoding of permutations as forests of decorated trees}
\label{sec:bijections}

In this section we show that any substitution-closed class of permutations may be bijectively encoded as a forest of trees decorated (or enriched) with local structures. This goal is achieved in Theorem~\ref{te:bijection}, in Subsection~\ref{sec:final}.

\subsection{Basics on combinatorial classes and decorated trees}

In this paper, we only consider rooted (a.k.a.\ planted) plane trees;
{\em plane} means that the children of a given vertex are endowed with a linear order.
Throughout the paper, the \emph{outdegree} $d^+_T(v)$ (or $d^+(v)$ when there is no ambiguity) of a vertex $v$ in a tree $T$ is the number of its children (which is sometimes called arity in other works). 
Note that it may be  different from the graph-degree:
the edge to the parent (if it exists) is not counted in the outdegree. 
We consider both finite and infinite trees. 
We say a tree is {\em locally finite}, if all its vertices have finite degree. 
A vertex of $T$ is called a {\em leaf}, if it has outdegree zero. The collection of non-leaves (also called \emph{internal vertices}) is denoted by $\Vint(T)$.
The \emph{fringe subtree} of a tree $T$ rooted at a vertex $v$ is the subtree of $T$ containing $v$ and all its descendants. 
We will also speak of {\em branch} attached to a vertex $v$ for a fringe subtree rooted at a child of $v$.

Any plane tree may be encoded in a canonical way as a subtree of the Ulam--Harris tree  $\mathcal{U}_{\infty}$. 
The vertex set of  $\mathcal{U}_{\infty}$ is given by the collection of all finite sequences of positive integers, 
and the offspring of a vertex $(i_1, \ldots, i_k)$ is given by all sequences $(i_1, \ldots, i_k, j)$, $j \ge 1$. The root of $\mathcal{U}_{\infty}$ is the unique sequence of length $0$.

Moreover, most trees considered here carry some additional structures on their vertices from a \emph{combinatorial class}.
Let $\DDD$ be a set and $\size:\DDD \to \NN_0$ be a map
from $\DDD$ to the set of non-negative integers, associating to each object in $\DDD$ its size. 
We say $\DDD$ is an (unlabelled) combinatorial class,  if for any $n \in \NN_0$ the number $d_n$ of $n$-sized objects in $\DDD$ is finite. This allows us to form the \emph{generating series}
\begin{align}
\DDD(z) = \sum_{n \ge 0} d_n z^n.
\end{align}
Note that we use the same curvy letter $\mathcal{D}$ for the class and its generating series.
This should hopefully not lead to confusions.
Two combinatorial classes $\mathcal{D}_1, \mathcal{D}_2$ are considered \emph{isomorphic} if there is a size-preserving bijection between the two,
or equivalently if they have the same generating series.

Various standard operations are available for combinatorial classes. 
For example, whenever $\DDD$ has no objects of size $0$,
we can form the combinatorial class $\Seq(\DDD)$,
which is the collection of finite sequences of objects from $\DDD$. 
The size of such a sequence is defined to be the sum of sizes of its components. 
We may also consider the subclass $\Seq_{\ge 1}(\DDD) \subset \Seq(\DDD)$ of non-empty sequences.

\begin{definition} \label{defn:decorated_tree}
	Let $\DDD$ be a combinatorial class.
	A $\DDD$-decorated (or $\DDD$-enriched) tree is a rooted locally finite plane tree $T$,
	equipped with a function $\dec:\Vint(T) \to \DDD$ from the set of internal vertices of $T$ to $\DDD$
	such that the following holds:
	{\em for each $v$ in $\Vint(T)$, the outdegree of $v$ is exactly $\size(\dec(v))$.} 
\end{definition}

This is a (planar) variant of Labelle's \emph{enriched trees}~\cite{MR642392}, which have been studied in~\cite{StEJC2018,stufler2016limits} from a probabilistic viewpoint. %

\subsection{Substitution decomposition and canonical trees}
\label{sec:DecoTrees}
We recall classical concepts related to permutation classes, including for expository purposes the concepts sketched in the introduction.

\begin{definition}%
	Let $\theta=\theta(1)\cdots \theta(d)$ be a permutation of size $d$, and let $\nu^{(1)},\dots,\nu^{(d)}$ be $d$ other permutations. 
	The \emph{substitution} of $\nu^{(1)},\dots,\nu^{(d)}$ in $\theta$,
	denoted by $\theta[\nu^{(1)},\dots,\nu^{(d)}]$, 
	is the permutation of size $|\nu^{(1)}|+ \dots +|\nu^{(d)}|$ 
	obtained by replacing each $\theta(i)$ by a sequence of integers isomorphic to $\nu^{(i)}$ while keeping the relative order induced by $\theta$ between these subsequences.\\
\end{definition}

Examples of substitution (see \cref{fig:sum_and_skew} below) are conveniently presented representing permutations by their diagrams: 
the diagram of $\nu = \theta[\nu^{(1)},\dots,\nu^{(d)}]$ is obtained by inflating each point $\theta(i)$ of $\theta$ by a square containing the diagram of $\nu^{(i)}$. 
Note that each $\nu^{(i)}$ then corresponds to a \emph{block} of $\nu$, a block being defined as an interval of $[|\nu|]$ which is mapped to an interval by $\nu$. 

Throughout this article, the increasing permutation $12 \ldots k$ will be denoted by $\oplus_k$,
or even $\oplus$ when its size $k$ can be recovered from the context:
this is the case in an inflation $\oplus[\nu^{(1)},\dots,\nu^{(d)}]$ where the size of $\oplus$ is the number $d$ of permutations inside the brackets.
Similarly, we denote the decreasing permutation $k \ldots 21$ by $\ominus_k$,
or $\ominus$ when there is no ambiguity.

\begin{figure}[htbp]
	\centering
		\includegraphics[width=6.5cm]{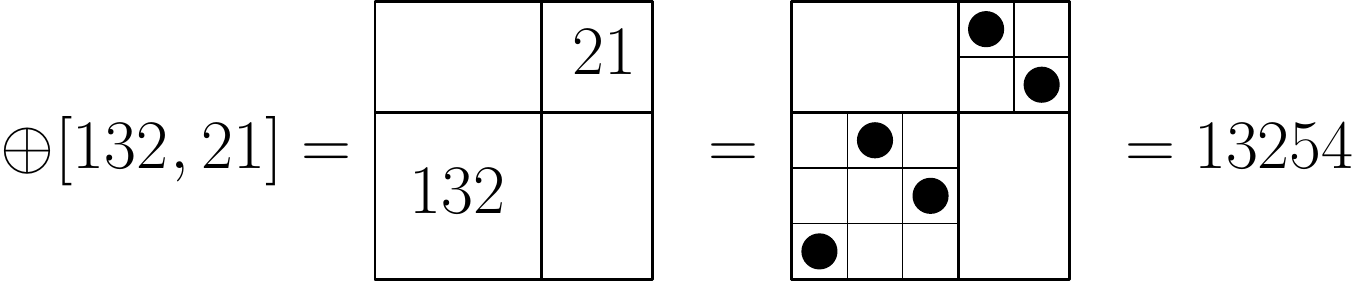} \qquad 
		\includegraphics[width=6.5cm]{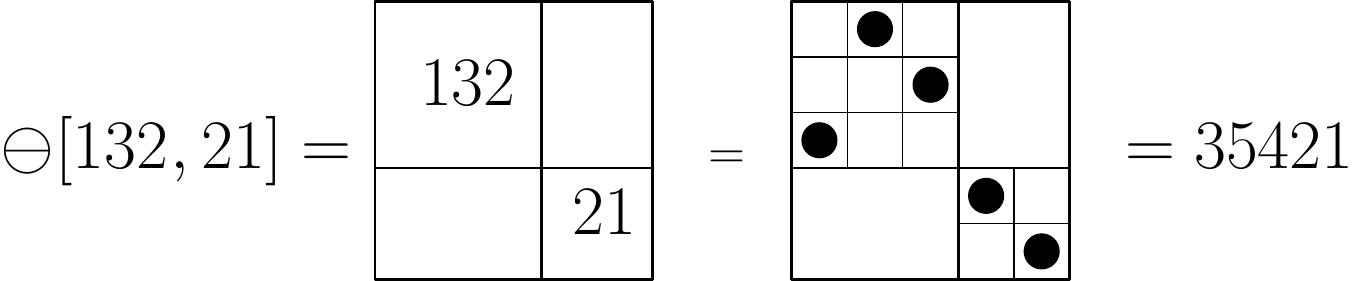}\medskip\\
		\includegraphics[width=95mm]{2413Sum}
	\caption{Substitution of permutations.\label{fig:sum_and_skew}}
\end{figure}

Permutations can be decomposed in a canonical way using recursively the substitution operation.
To explain this, we first need to define several notions of indecomposable objects.
\begin{definition}%
	A permutation $\nu$ is \emph{$\oplus$-indecomposable} (resp. $\ominus$-indecomposable) 
	if it cannot be written as $\oplus[\nu^{(1)},\nu^{(2)}]$ (resp. $\ominus[\nu^{(1)},\nu^{(2)}]$).
	
	A permutation of size $n > 2$ is \emph{simple} if it contains no nontrivial block, i.e., if it does not map any nontrivial interval 
	(i.e., a range in $[n]$ containing at least two and at most $n-1$ elements) onto an interval.
\end{definition}
For example, $451326$ is not simple as it maps the interval $[3,5]$ onto the interval $[1,3]$. 
The smallest simple permutations are $2413$ and $3142$ (there is no simple permutation of size $3$). 
We denote by $\Sall$ the set of simple permutations.
\medskip

\begin{remark}
Usually in the literature, the definition of a simple permutation requires $n \geq 2$ instead of $n > 2$, so that $12$ and $21$ are considered to be simple.
However, for decomposition trees, $12$ and $21$ do not play the same role as the other simple permutations, that is why we do not consider them to be simple.
\end{remark}

\begin{theorem}[Decomposition of permutations]\label{Th:AlbertAtkinson}
	Every permutation $\nu$ of size $n\geq 2$ can be uniquely decomposed as either:
	\begin{itemize}
		\item $\alpha[\nu^{(1)},\dots,\nu^{(d)}]$, where $\alpha$ is a simple permutation (of size $d\geq 4$),
		\item $\oplus[\nu^{(1)},\dots,\nu^{(d)}]$, where $d\geq 2$ and $\nu^{(1)},\dots,\nu^{(d)}$ are $\oplus$-indecomposable,
		\item $\ominus[\nu^{(1)},\dots,\nu^{(d)}]$, where $d\geq 2$ and $\nu^{(1)},\dots,\nu^{(d)}$ are $\ominus$-indecomposable.
	\end{itemize}
\end{theorem}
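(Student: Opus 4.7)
The approach I would take is the classical interval/block analysis due to Albert and Atkinson. An \emph{interval} (or block) of $\nu$ is a range $[a,b] \subseteq [n]$ whose image $\nu([a,b])$ is also a range in $[n]$. My cornerstone is a \emph{block lemma}: if two intervals $B_1, B_2$ of $\nu$ have nonempty intersection as subsets of $[n]$, then $B_1 \cap B_2$ and $B_1 \cup B_2$ are also intervals of $\nu$. This follows at once from the corresponding statement for overlapping intervals of $[n]$ together with injectivity of $\nu$ applied to the images.

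First I would check that the three cases in the statement are mutually exclusive. If $\nu = \oplus[\mu^{(1)},\mu^{(2)}]$ then $\nu^{-1}(1) \le |\mu^{(1)}| < \nu^{-1}(n)$, while $\nu = \ominus[\mu^{(1)},\mu^{(2)}]$ gives the reverse inequality, and a simple skeleton (of size $\ge 4$) is neither kind. Next, when $\nu$ is $\oplus$-decomposable, I would take the finest decomposition with $\oplus$-indecomposable parts: existence is an easy induction, and uniqueness holds since the split positions are forced to be precisely those $i$ with $\nu([1,i]) = [1,i]$ for which $\pat_{[1,i]}(\nu)$ is itself $\oplus$-indecomposable. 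The $\ominus$-decomposable case is entirely symmetric.

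The heart of the argument is the remaining case, where $\nu$ is both $\oplus$- and $\ominus$-indecomposable. I would consider the collection of maximal proper blocks of $\nu$, where \emph{proper} means of size strictly less than $n$, and claim that they form a partition of $[n]$. Suppose for contradiction that two such blocks $B_1, B_2$ overlap. By the block lemma and maximality, $B_1 \cup B_2 = [n]$, so after relabelling $B_1 = [1,b]$ and $B_2 = [a,n]$ with $2 \le a \le b \le n-1$. The images $\nu(B_1), \nu(B_2)$ then overlap, their union is $[1,n]$, and neither contains the other (by injectivity), which forces $\{\nu(B_1), \nu(B_2)\} = \{[1,q],[r,n]\}$ with $r \le q$. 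A short computation then shows that $\nu([1,a-1])$ and $\nu([b+1,n])$ are both intervals, placed either in increasing or in decreasing order with $\nu([a,b])$; this exhibits $\nu$ as $\oplus$ or $\ominus$ of three parts, contradicting the hypothesis of the case. Granted the partition property, the quotient permutation $\alpha$ on the maximal proper blocks has size $d \ge 2$ and no nontrivial block (any such would lift to a block of $\nu$ strictly larger than some $B_i$). Since $d = 2$ and $d = 3$ would force $\nu$ to be $\oplus$- or $\ominus$-decomposable (all permutations of size at most $3$ are), we get $d \ge 4$ and $\alpha$ simple. Uniqueness in this case is automatic since the maximal proper blocks of $\nu$ are canonically determined.

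The main obstacle I foresee is the geometric step in the third case ruling out overlapping maximal proper blocks: it requires carefully tracking where the common intersection $\nu([a,b])$ sits inside $\nu(B_1)$ and $\nu(B_2)$ and converting this picture into an explicit $\oplus$- or $\ominus$-decomposition. Every other step is either elementary bookkeeping or a routine induction on size.
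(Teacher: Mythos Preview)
Your approach is correct and is essentially the classical argument of Albert and Atkinson. Note that the paper does not give its own proof of this theorem: it simply cites \cite{albert2005simple} (Proposition~2) and remarks on how the present formulation differs in presentation. So there is no independent argument in the paper to compare against, and what you outline is precisely the standard block-lattice proof underlying that reference.

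One step you assert without justification is that a decomposition $\nu=\alpha[\nu^{(1)},\dots,\nu^{(d)}]$ with $\alpha$ simple forces $\nu$ to be both $\oplus$- and $\ominus$-indecomposable (your phrase ``a simple skeleton is neither kind''). This is needed to make the three cases genuinely exclusive, and it is not immediate from the inequality $\nu^{-1}(1)<\nu^{-1}(n)$ alone. It does follow from the same block machinery: if $[1,k]$ were a proper block of $\nu$ with image $[1,k]$, then either it is a union of some initial $C_1,\dots,C_m$ (giving the nontrivial block $[1,m]$ of $\alpha$), or it overlaps one $C_j$, in which case $[1,k]\cup C_j$ is such a union by your block lemma; either way $\alpha$ acquires a nontrivial block. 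Similarly, your uniqueness claim in the simple case (``the maximal proper blocks are canonically determined'') implicitly uses that any other simple decomposition $\nu=\beta[\mu^{(1)},\dots,\mu^{(e)}]$ must have each $\mu^{(j)}$ equal to a maximal proper block, which again comes down to the same overlap argument. These are routine to fill in, and once done your sketch is complete.
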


\begin{remark}
	The above theorem is essentially Proposition 2 in \cite{albert2005simple}, presented with a slightly different point of view.
	The decomposition according to \cref{Th:AlbertAtkinson} is obtained from the one of \cite[Proposition 2]{albert2005simple} 
	by merging maximal sequences of nested substitutions in $12$ (resp. $21$) into a substitution in $\oplus$ (resp. $\ominus$).
	For example, the second item above for $d=4$ corresponds to $12[\nu^{(1)}, 12[\nu^{(2)},12[\nu^{(3)},\nu^{(4)}]]]$ with the notation of \cite{albert2005simple}. 
	With this obvious rewriting, the statements of \cite[Proposition 2]{albert2005simple} and of \cref{Th:AlbertAtkinson} are trivially equivalent.  
\end{remark}

This decomposition theorem can be applied recursively
inside the permutations $\nu^{(i)}$ appearing in the items above, 
until we reach permutations of size $1$. 
Doing so, a permutation $\nu$ can be naturally encoded by a rooted labelled plane tree $\CanTree(\nu)$ as follows. \label{def:cantree}
(The notation $\CanTree(\nu)$ stands for \emph{canonical tree} -- see \cref{defintro:CanonicalTree}.)
\begin{itemize}
	\item If $\nu=1$ is the unique permutation of size $1$,
	then $\CanTree(\nu)$ is reduced to a single leaf.
	\item If $\nu=\beta[\nu^{(1)},\dots,\nu^{(d)}]$, where $\beta$ is either a simple permutation
	or the increasing (resp. decreasing) permutation (denoted by $\oplus$, resp. $\ominus$),
	then $\CanTree(\nu)$ has a root of degree $d$
	labelled by $\beta$ and the subtrees attached to the root
	are $\CanTree(\nu^{(1)})$, \ldots, $\CanTree(\nu^{(d)})$ (in this order from left to right).
\end{itemize}

From the above theorem, the decomposition $\nu=\beta[\nu^{(1)},\dots,\nu^{(d)}]$ exists
and is unique if $|\nu|~\ge~2$. Moreover, $\nu^{(1)},\dots,\nu^{(d)}$ have size smaller than $\nu$
so that this recursive procedure always terminates and its result is unambiguously defined.
The map $\CanTree$ is therefore well-defined.
An example of this construction is shown on \cref{fig:ExCanTree}.
\begin{figure}[htbp]
	\centering
		\includegraphics[height=5cm]{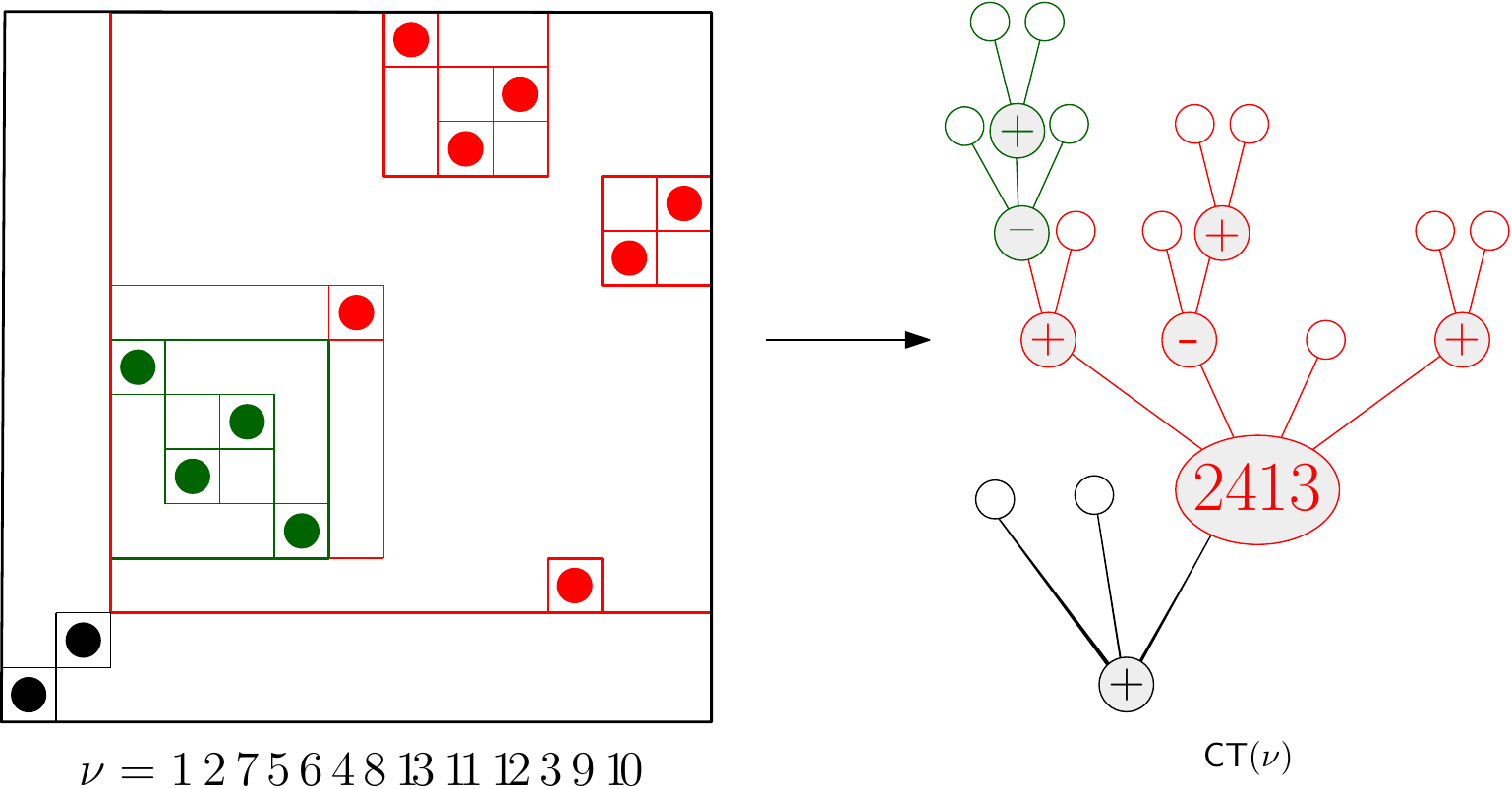}
	\caption{A permutation $\nu$ and its decomposition tree $\CanTree(\nu)$.
		To help the reader understand the construction, we have coloured accordingly
		some blocks of $\nu$ and some subtrees of $\CanTree(\nu)$.}
	\label{fig:ExCanTree}
\end{figure}

Since the labels of the vertex record the permutation $\beta$
in which we substitute, it is clear that $\CanTree$ is injective. 
Moreover, its inverse (once restricted to $\CanTree(\mathfrak{S})$) 
is immediate to describe, simply by performing the iterated substitutions recorded in the tree. 
We are just left with identifying the image set of $\CanTree$.
Recall that $\Sall$ denotes the set of all simple permutations, and let $\MMM$ be the set of all monotone (increasing or decreasing) permutations of size at least $2$. Denote $\widehat{\Sall}:=\Sall \cup \MMM$.
\begin{definition}\label{defintro:CanonicalTree}
	A \emph{canonical tree}  is an $\widehat{\Sall}$-decorated tree
	such that we cannot find two adjacent vertices both decorated
	with increasing permutations (i.e., with $\oplus$) or both decorated with decreasing permutations (i.e., with $\ominus$).
\end{definition}
Canonical trees are also known in the literature under several names:
decomposition trees, substitution trees,\ldots
We choose the term {\em canonical}
to be consistent with \cite{bassino2017universal}.
The following is an easy consequence of \cref{Th:AlbertAtkinson}.
\begin{proposition}\label{prop:Can_tree}
	The map $\CanTree$ defines a size-preserving bijection from the set of
	all permutations to the set of all canonical trees,
	the size of a tree being its number of \emph{leaves}.
\end{proposition}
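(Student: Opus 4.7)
The plan is to prove the four required properties of $\CanTree$ — well-definedness, size-preservation, injectivity, and surjectivity onto canonical trees — essentially by induction on the size of the permutation, with the Albert--Atkinson decomposition theorem (\cref{Th:AlbertAtkinson}) as the main input at each step.

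First I would observe that well-definedness and size-preservation follow directly from the recursive definition. For $\nu$ of size $1$ the tree is a single leaf. For $|\nu| \ge 2$, the existence and uniqueness statement of \cref{Th:AlbertAtkinson} provides a decomposition $\nu = \beta[\nu^{(1)},\dots,\nu^{(d)}]$ with $|\nu^{(i)}| < |\nu|$, so the recursion terminates. Size-preservation follows by a trivial induction: if each $\CanTree(\nu^{(i)})$ has $|\nu^{(i)}|$ leaves, their union under a new root gives $\sum_i |\nu^{(i)}| = |\nu|$ leaves.

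Next I would show that the image of $\CanTree$ is contained in the set of canonical trees. Suppose an internal vertex $v$ of $\CanTree(\nu)$ is labelled by $\oplus$, corresponding to a subpermutation $\mu = \oplus[\mu^{(1)},\dots,\mu^{(d)}]$ of $\nu$. By \cref{Th:AlbertAtkinson}, each $\mu^{(i)}$ is then $\oplus$-indecomposable, hence by the same theorem its own decomposition is of one of the two remaining types, so the root of $\CanTree(\mu^{(i)})$ — which is a child of $v$ — is not labelled $\oplus$. A symmetric argument handles $\ominus$, establishing the canonical tree condition of \cref{defintro:CanonicalTree}.

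For injectivity and surjectivity I would construct the inverse. Define a map $\Phi$ from canonical trees to permutations by: send a single leaf to $1 \in \SG^1$, and send a tree with root labelled $\beta$ and immediate subtrees $T_1,\dots,T_d$ to $\beta[\Phi(T_1),\dots,\Phi(T_d)]$. This is well-defined by induction on the number of internal vertices. To see that $\CanTree \circ \Phi = \id$ on canonical trees, observe that the canonical condition on $T$ ensures that the decomposition $\Phi(T) = \beta[\Phi(T_1),\dots,\Phi(T_d)]$ satisfies the hypotheses of the appropriate case of \cref{Th:AlbertAtkinson}: if $\beta$ is simple, this is immediate; if $\beta = \oplus$, the children of the root in $T$ are not labelled $\oplus$, so each $\Phi(T_i)$ is $\oplus$-indecomposable; and symmetrically for $\ominus$. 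By the uniqueness part of \cref{Th:AlbertAtkinson}, $\CanTree(\Phi(T))$ has the same root label and the same children as $T$, and we conclude by induction. The composition $\Phi \circ \CanTree = \id$ is handled identically.

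The main obstacle — though a mild one — is bookkeeping in the last step: one must check that the three cases of \cref{Th:AlbertAtkinson} correspond bijectively to the three admissible label types at the root of a canonical tree, and that the canonical condition on \emph{adjacent} vertices is precisely what is needed to match the $\oplus$- and $\ominus$-indecomposability constraints in \cref{Th:AlbertAtkinson}. Everything else is a straightforward induction on the size.
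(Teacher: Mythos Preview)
Your proposal is correct and follows exactly the approach the paper takes: the paper does not give a formal proof but simply notes before the proposition that $\CanTree$ is well-defined (the recursion terminates), injective (since the labels record $\beta$), has an obvious inverse (performing the nested substitutions), and that the remaining task is identifying the image; it then states the proposition as ``an easy consequence of \cref{Th:AlbertAtkinson}''. Your write-up is a fully fleshed-out version of this same sketch, including the key observation that the canonical-tree constraint on adjacent $\oplus$/$\ominus$ labels matches the indecomposability hypotheses in \cref{Th:AlbertAtkinson}.
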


\begin{remark}
	\label{rk:CT-1}
	We note that the inverse map $\CanTree^{-1}$, which builds a permutation from a canonical tree performing nested substitutions, 
	can obviously be extended to all $\widehat{\Sall}$-decorated trees, regardless of whether they contain $\oplus - \oplus$ or $\ominus - \ominus$ edges. 
	However, $\CanTree^{-1}$ is no longer injective on this larger class of ``non-canonical'' decomposition trees.
\end{remark}

We will be interested in the restriction of $\CanTree$
to some permutation class.
The following condition ensures that its image
has a nice description.

\begin{definition}%
	\label{def:substclosed}
	A permutation class $\mathcal{C}$ is \emph{substitution-closed}
	if  for every $\theta, \nu^{(1)},\dots,\nu^{(d)}$ in $\mathcal{C}$ it holds that
	$\theta[\nu^{(1)},\dots,\nu^{(d)}] \in \mathcal{C}$.
\end{definition}
\begin{proposition}
	\label{prop:treesOfSubsClosedClasses}
	Let $\mathcal{C}$ be a substitution-closed permutation class, and assume\footnote{Otherwise, 
		$\mathcal{C} \subseteq \{12\ldots k : k \geq 1 \}$ or 
		$\mathcal{C} \subseteq \{k\ldots 21 : k \geq 1 \}$ and these cases are trivial.} that $12,21 \in \mathcal{C}$. 
	Denote by $\mathcal{S}$ the set of simple permutations in $\mathcal{C}$. 
	The set of canonical trees encoding permutations of $\mathcal{C}$ 
	is the set of canonical trees with decorations in
	$\widehat{\SSS}:=\SSS \cup \MMM$. 
\end{proposition}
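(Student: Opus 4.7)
The plan is to prove both inclusions by induction, treating $\CanTree$ as the well-defined bijection from \cref{prop:Can_tree}.

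For the forward inclusion, I would show by induction on $|\nu|$ that if $\nu \in \mathcal{C}$ then every decoration of $\CanTree(\nu)$ lies in $\widehat{\mathcal{S}} = \mathcal{S} \cup \mathcal{M}$. The base case $|\nu|=1$ is vacuous since the tree has no internal vertex. For $|\nu| \geq 2$, write $\nu = \beta[\nu^{(1)}, \ldots, \nu^{(d)}]$ as in \cref{Th:AlbertAtkinson}, so that the root of $\CanTree(\nu)$ is decorated by $\beta$ and its children subtrees are $\CanTree(\nu^{(i)})$. By construction $\beta$ is either simple or monotone; if monotone it is automatically in $\widehat{\mathcal{S}}$, and if simple it suffices to check that $\beta \in \mathcal{C}$ (hence $\beta \in \mathcal{S}$). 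For this I would note that picking one index from each block of $\nu$ corresponding to $\nu^{(i)}$ produces a subsequence of $\nu$ which is order-isomorphic to $\beta$; thus $\beta \leq \nu$, and since $\mathcal{C}$ is closed under pattern-containment, $\beta \in \mathcal{C}$. Similarly each $\nu^{(i)} \leq \nu$ so $\nu^{(i)} \in \mathcal{C}$, and the induction hypothesis applies to $\nu^{(i)}$.

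For the reverse inclusion, I would show by induction on the number of internal vertices of a canonical tree $T$ with decorations in $\widehat{\mathcal{S}}$ that $\CanTree^{-1}(T) \in \mathcal{C}$. The base case is $T$ a single leaf, giving the permutation $1$, which is a pattern of $12 \in \mathcal{C}$ and therefore in $\mathcal{C}$. For the inductive step, let $\beta \in \widehat{\mathcal{S}}$ be the root decoration and $T_1, \ldots, T_d$ the subtrees, so that $\CanTree^{-1}(T) = \beta[\CanTree^{-1}(T_1), \ldots, \CanTree^{-1}(T_d)]$. By the induction hypothesis each $\CanTree^{-1}(T_i) \in \mathcal{C}$. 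I must then check that $\beta \in \mathcal{C}$. If $\beta \in \mathcal{S}$ this is immediate. If $\beta \in \mathcal{M}$, then $\beta = \oplus_k$ or $\beta = \ominus_k$ for some $k \geq 2$; I would then argue by a short induction on $k$ that $\oplus_k, \ominus_k \in \mathcal{C}$: for $k=2$ they are exactly $12, 21 \in \mathcal{C}$ by hypothesis, and for $k > 2$ we have $\oplus_k = \oplus_2[\oplus_{k-1}, 1]$ which lies in $\mathcal{C}$ by the substitution-closure of $\mathcal{C}$ (and analogously for $\ominus_k$). Once $\beta \in \mathcal{C}$ is established, substitution-closure immediately yields $\beta[\CanTree^{-1}(T_1), \ldots, \CanTree^{-1}(T_d)] \in \mathcal{C}$, completing the induction.

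No serious obstacle is expected here: the only small subtlety is remembering that for the forward direction one extracts $\beta$ as a pattern of $\nu$ by picking one representative per block, and that the monotone decorations (which are \emph{not} elements of $\mathcal{S}$) still belong to $\mathcal{C}$ thanks to the assumption $12, 21 \in \mathcal{C}$ combined with substitution-closure. Both directions use the definition of $\CanTree$ in an essentially mechanical way, with \cref{Th:AlbertAtkinson} providing uniqueness of the decomposition and hence well-definedness of the recursion.
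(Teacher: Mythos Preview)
Your proof is correct and follows essentially the same approach as the paper's, which is very terse: it argues the forward inclusion by contrapositive (a simple decoration $\alpha \notin \mathcal{S}$ forces $\alpha \le \nu$, hence $\nu \notin \mathcal{C}$) and the reverse inclusion ``by induction, because $\mathcal{C}$ is substitution-closed,'' deferring details to \cite[Lemma 11]{albert2005simple}. Your write-up simply fills in the details the paper omits, including the explicit check that $\oplus_k, \ominus_k \in \mathcal{C}$ from the hypothesis $12,21 \in \mathcal{C}$.
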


\begin{proof}
	First, if a canonical tree contains a vertex decorated by a simple permutation $\alpha \notin \mathcal{S}$,
	then the corresponding permutation $\nu$ contains the pattern $\alpha \notin \mathcal{C}$, and hence $\nu \notin \mathcal{C}$. 
	Second, by induction, all canonical trees with decorations in $\widehat{\SSS}$
	encode permutations of $\mathcal{C}$, because $\mathcal{C}$ is substitution-closed. 
	If necessary, details can be found in~\cite[Lemma 11]{albert2005simple}. 
\end{proof}

\subsection{Packed decomposition trees}
\label{sect:Packed_decomposition_trees}

From now until the end of the article we fix a substitution-closed class $\mathcal{C}$ such that $12,21\in \mathcal{C}$ and we denote with  $\mathcal{S}$ the set of simple permutations in $\mathcal{C}$. 
The assumption that we are working in $\mathcal{C}$ rather than in the set of all permutations is however often tacit: 
for example, we simply refer to canonical trees instead of canonical trees with decorations in $\widehat{\SSS}=\SSS \cup \MMM$.  We let \label{def:ct}$\mathcal{T}$ denote the collection canonical trees with decorations in $\widehat{\SSS}$, and $\mathcal{T}_{\nonp} \subset \mathcal{T}$ the subset of canonical trees with a root that is \textbf{not} labelled $\oplus$.

In this section we introduce a new family of trees called ``packed trees''
and describe a bijection between the collection $\mathcal{T}_{\nonp} \subset \mathcal{T}$ and packed trees.
Packed trees are decorated trees,
whose decorations are themselves trees.
Let us define these decorations, that we call {\em gadgets}.

\begin{definition}\label{def:S_plus_decoration}
	An $\SSS$-gadget is an $\widehat{\SSS}$-decorated tree of height at most $2$ such that: 
	\begin{itemize}
		\item The root is an internal vertex decorated by a simple permutation;
		\item The children of the root are either leaves or decorated by an increasing permutation.  
	\end{itemize}	
	The size of a gadget is its number of leaves. 
\end{definition}
We denote with $\GGG(\mathcal{S})$ the set of $\mathcal{S}$-gadgets. An example of size 7 is shown on \cref{fig:S/+-decoration}.

\begin{figure}[htbp]
	\centering
		\includegraphics[height=2.5cm]{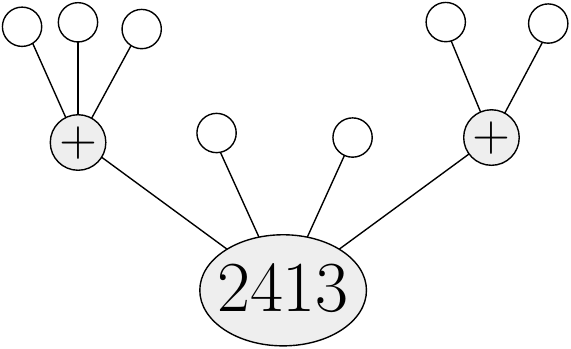}
	\caption{An $\mathcal{S}$-gadget.}
	\label{fig:S/+-decoration}
\end{figure}

Finally, let $\widehat{\GGG(\SSS)} = \GGG(\mathcal{S})\cup\{\circledast_k, k \ge 2\}$,
where, for each integer $k\ge 2$, the object $\circledast_k$ has size~$k$. 
To shorten notation,  $\widehat{\GGG(\SSS)}$ is sometimes denoted $\mathcal{Q}$ in the following. 

\begin{definition}
	An \emph{$\mathcal{S}$-packed tree} is a $\widehat{\GGG(\SSS)}$-decorated tree,
	its size being its number of leaves.
\end{definition}

\begin{remark}
	We will often refer to $\mathcal{S}$-packed trees simply as packed trees since in our analysis, the substitution-closed class $\mathcal{C}$ and 
	its set of simple permutations $\mathcal{S}$ will be fixed.
\end{remark}

An example of packed tree is shown on the right-hand side of \cref{fig:packed_tree_bij}.

\begin{remark}
	Note that in \cref{fig:packed_tree_bij} the subscript $k$ is not reported in the vertices decorated by an element in $\{\circledast_k, k \ge 2\}.$ 
	Indeed it can be easily recovered by counting the number of children of the vertex. 
\end{remark}

We now describe a bijection between canonical trees with a root that is not labelled with $\oplus$ 
and packed trees. Given a tree $T\in\mathcal{T}_{\nonp}$ the corresponding packed tree $\Pack(T)$ is obtained modifying $T$ as follows. 
\begin{itemize}
	\item For each internal vertex $v$ of $T$ labelled by a simple permutation, we build an $\mathcal{S}$-gadget $G_v$
	whose internal vertices are $v$ and the $\oplus$-children of $v$, 
	the parent-child relation in $G_v$ and the left-to-right order between children are inherited from the ones in $T$, 
	and we add leaves so that the outdegree of each internal vertex is the same in $G_v$ as in $T$.
	Then, in $\Pack(T)$, we merge $v$ and the $\oplus$-children of $v$ into a single vertex decorated by $G_v$. 
	\item The remaining vertices of $T$, decorated by $\ominus_k$ or $\oplus_k$,
	are decorated by $\circledast_k$ instead.
\end{itemize}
An example is given on \cref{fig:packed_tree_bij}.
As a preparation for the inversion procedure, let us note the following:
if a vertex $\tilde{v}$ in $\Pack(T)$
has a decoration $\circledast_k$ and his parent is decorated by an $\SSS$-gadget,
then the corresponding vertex $v$ in $T$ had decoration $\ominus_k$.
Indeed, a vertex decorated by $\oplus$ which is the child of a vertex $v$ labelled by a simple permutation is included in $G_v$,
and canonical trees do not contain $\oplus-\oplus$ edges.

\begin{figure}[htbp]
	\centering
	\includegraphics[height=7cm]{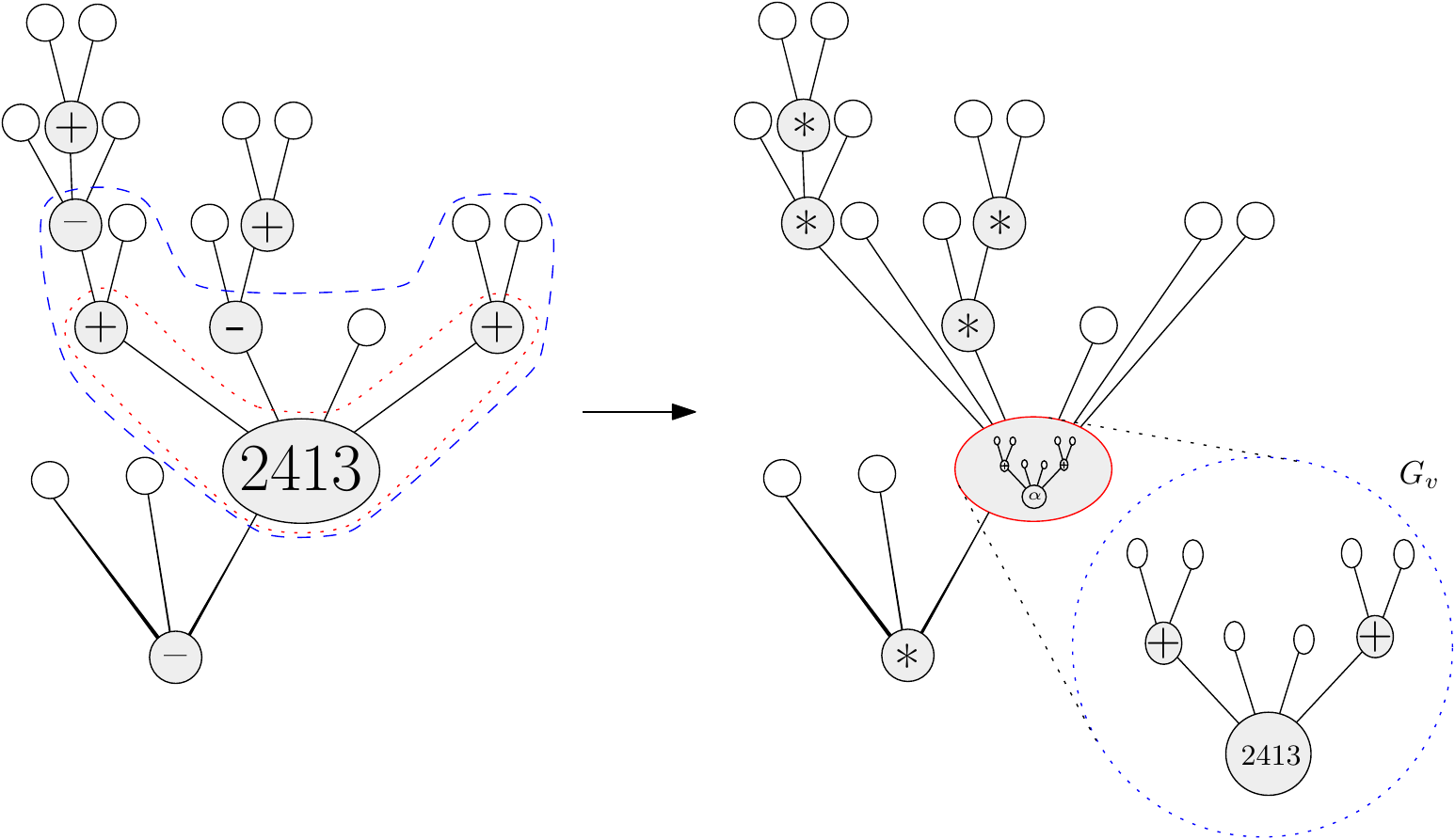}
	\caption{A canonical tree $T$ with the corresponding packed tree $\Pack(T)$. The red dotted line highlights the vertices in $T$ that are merged in the red vertex in $\Pack(T).$ The blue dashed line highlights the subtree in $T$ that determines the decoration $G_v$ in the tree $\Pack(T)$.}
	\label{fig:packed_tree_bij}
\end{figure}
\begin{proposition}\label{prop:pack_tree}
	The map $\Pack$ defines a size-preserving bijection from the set $\mathcal{T}_{\nonp}$
	of canonical trees with a root that is not labelled $\oplus$ to the set of $\SSS$-packed trees.
\end{proposition}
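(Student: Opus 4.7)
I will prove both that $\Pack$ is well-defined as a map into $\mathcal{S}$-packed trees and that it is a bijection by explicitly constructing its inverse. The proof naturally splits into three steps.

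\textbf{Step 1: $\Pack$ is well-defined and preserves size.} Given $T \in \mathcal{T}_{\nonp}$, I need to check that $\Pack(T)$ satisfies \cref{defn:decorated_tree} with decorations in $\widehat{\GGG(\mathcal{S})}$. For a vertex $v$ of $T$ decorated by a simple $\alpha \in \mathcal{S}$, the gadget $G_v$ has $v$ at its root decorated by $\alpha$, and its depth-$1$ internal vertices are the $\oplus$-children of $v$; the depth-$2$ vertices are all leaves. This matches \cref{def:S_plus_decoration}. A routine count shows the number of leaves of $G_v$ equals the outdegree of the merged vertex in $\Pack(T)$, so the outdegree constraint of \cref{defn:decorated_tree} holds. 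For vertices labelled $\oplus_k$ or $\ominus_k$ in $T$ that do not get merged, the relabelling to $\circledast_k$ preserves the outdegree. Finally, the leaves of $\Pack(T)$ are precisely the leaves of $T$ (those leaves which become part of some $G_v$ stay leaves, since the new leaves added to fill $G_v$ are counted inside the decoration, not as tree-leaves of $\Pack(T)$), so sizes agree.

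\textbf{Step 2: Construction of the inverse $\Pack^{-1}$.} Given an $\mathcal{S}$-packed tree $P$, I build a tree $\widetilde{T}$ as follows. First, every vertex $\tilde{v}$ of $P$ decorated by an $\mathcal{S}$-gadget $G$ is replaced by a copy of the internal structure of $G$: the vertex $\tilde{v}$ becomes a vertex decorated by the simple permutation at the root of $G$, and each depth-$1$ internal vertex of $G$ is inserted as a new child of $\tilde{v}$ labelled $\oplus$, with the original children of $\tilde{v}$ in $P$ re-attached as children of these new $\oplus$-vertices in the order prescribed by $G$. This expansion is uniquely determined by $G$. Second, I assign signs to each $\circledast_k$-decorated vertex by a top-down traversal: if $\tilde{v}$ is the root of $P$, label it $\ominus_k$; if its parent is an $\mathcal{S}$-gadget or is $\oplus$-labelled, label $\tilde{v}$ as $\ominus_k$; if its parent is $\ominus$-labelled, label $\tilde{v}$ as $\oplus_k$. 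Define $\Pack^{-1}(P) = \widetilde{T}$ thus constructed.

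\textbf{Step 3: Verification of bijectivity.} It is immediate from the sign rule that $\widetilde{T}$ contains no $\oplus\text{-}\oplus$ nor $\ominus\text{-}\ominus$ edge, and that its root is not labelled $\oplus$, so $\widetilde{T} \in \mathcal{T}_{\nonp}$. To check $\Pack^{-1}(\Pack(T)) = T$: the unpacking of gadgets recovers the $\oplus$-children of simple vertices from $G_v$, and the sign-reconstruction rule recovers the remaining signs because, as observed in the paragraph preceding \cref{prop:pack_tree}, a $\circledast$-vertex under a gadget must come from a $\ominus$-vertex, and the absence of $\oplus\text{-}\oplus$ and $\ominus\text{-}\ominus$ edges in $T$ forces the alternation for the other cases; the root constraint $T \in \mathcal{T}_{\nonp}$ handles the root. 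Conversely, $\Pack(\Pack^{-1}(P)) = P$ because the packing of $\widetilde{T}$ re-merges exactly the $\oplus$-children introduced during unpacking into the original gadgets, and the $\oplus/\ominus$-labelled vertices of $\widetilde{T}$ coming from $\circledast$-vertices of $P$ get relabelled back to $\circledast$. Size preservation was established in Step~1.

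The only subtle point, and the one deserving the most care, is the top-down sign reconstruction in Step~2: one must verify that this rule is forced by the canonical-tree constraint together with the information recorded by $\Pack$ (namely the structural distinction between $\mathcal{S}$-gadget vertices and $\circledast$ vertices), which is what makes $\Pack$ injective despite forgetting the $\oplus/\ominus$ distinction on unmerged monotone vertices.
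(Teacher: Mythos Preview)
Your proof is correct and follows essentially the same approach as the paper: construct the inverse explicitly by unpacking each $\mathcal{S}$-gadget and then assigning $\oplus/\ominus$ signs to the $\circledast$-vertices by a top-down alternation rule anchored at the root and at simple-permutation vertices. You give more detail than the paper (which is quite terse), in particular by explicitly checking well-definedness and both composites $\Pack^{-1}\circ\Pack$ and $\Pack\circ\Pack^{-1}$; the only minor imprecision is that your sign rule in Step~2 should also explicitly cover the case where the parent after unpacking is labelled by a simple permutation (not just ``$\mathcal{S}$-gadget'' or $\oplus$), but this is clearly what you intend.
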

\begin{proof}
	We need just to show that the previous construction is invertible. 
	Given a packed tree $\PackedTree$, the corresponding tree $T$ of $\mathcal{T}_{\nonp}$ (such that $\PackedTree=\Pack(T)$) is obtained by modifying $\PackedTree$ as follows.
	\begin{itemize}
		\item For each internal vertex $\tilde{v}$ of $\PackedTree$ decorated by an $\mathcal{S}$-gadget $G$, 
		we replace $\tilde{v}$ by $G$, merging the leaves of $G$ with the children of $\tilde{v}$, respecting their order. 
		Namely, when doing this replacement, the root of the $i$-th subtree attached to $\tilde{v}$ (from left to right) 
		is merged with the $i$-th leaf of $G$ (also from left to right). 
		\item We replace each decoration $\circledast_k$ with either $\ominus_k$ or $\oplus_k$, with the following rule. 
		If $\tilde{v}$ is the root of $\PackedTree$ or the child of a vertex decorated by an $\mathcal{S}$-gadget, 
		it receives label $\ominus_k$. 
		Otherwise, if $\tilde{v}$ is the child of a vertex also decorated by some $\circledast$, then 
		we label $\tilde{v}$ in the only way that prevents the creation of 
		$\oplus - \oplus$ or $\ominus - \ominus$ edges. 
	\end{itemize}
	This shows that $\Pack$ defines a bijection.
\end{proof}

\begin{remark}
	\label{rk:Leaves_Elements}
	If $T$ (or $P$) is a tree  with $n$ leaves,
	we can label its leaves with number from $1$ to $n$
	using a depth-first traversal of the tree from left to right.
	Then the $i$-th leaf
	of the canonical or packed tree associated to a permutation
	$\nu$ corresponds to the $i$-th element in the one-line notation
	of $\nu$.
	We will use this identification between leaves and elements of the permutations
	later in the article.
\end{remark}

\subsection{Permutations are forests of decorated trees}
\label{sec:final}

Summing up the results obtained in the previous sections (in particular in \cref{prop:Can_tree,prop:treesOfSubsClosedClasses,prop:pack_tree}),
we obtain a bijective encoding of $\oplus$-indecomposable permutations in $\mathcal{C}$:

\begin{lemma}
	\label{le:bij_perm_tree}
	The map 
	\[
	\Pack\circ\CanTree: \mathcal{C}_{\nonp} \to \cP
	\]
	is a size-preserving bijection from the set $ \mathcal{C}_{\nonp}$
	of all $\oplus$-indecomposable permutations in $\mathcal{C}$
	to the set $\cP$ of all $\cS$-packed trees.
\end{lemma}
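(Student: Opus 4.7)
The plan is to obtain the lemma as an immediate composition of two bijections already established in the excerpt. The first ingredient is that $\CanTree$, by \cref{prop:Can_tree} combined with \cref{prop:treesOfSubsClosedClasses}, is a size-preserving bijection between $\mathcal{C}$ and $\mathcal{T}$. The second is $\Pack$, which by \cref{prop:pack_tree} is a size-preserving bijection between $\mathcal{T}_{\nonp}$ and $\cP$. So the only genuine content of the proof is to verify that under $\CanTree$ the subset $\mathcal{C}_{\nonp}$ of $\oplus$-indecomposable permutations of $\mathcal{C}$ corresponds exactly to the subset $\mathcal{T}_{\nonp}$ of canonical trees whose root is not labelled $\oplus$.

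To establish this correspondence, I would handle the cases $|\nu|=1$ and $|\nu|\ge 2$ separately. The case $|\nu|=1$ is trivial: the unique size-$1$ permutation cannot be written as $\oplus[\nu^{(1)},\nu^{(2)}]$, so it is $\oplus$-indecomposable, and its image under $\CanTree$ is a single leaf, whose root carries no decoration at all and in particular is not labelled $\oplus$. For $|\nu|\ge 2$, I would invoke the uniqueness part of \cref{Th:AlbertAtkinson}: $\nu$ admits a unique writing $\beta[\nu^{(1)},\dots,\nu^{(d)}]$ with $\beta$ either simple, or equal to $\oplus$ with $\oplus$-indecomposable components, or equal to $\ominus$ with $\ominus$-indecomposable components. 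By the recursive definition of $\CanTree$, the label of the root of $\CanTree(\nu)$ is precisely this $\beta$. Consequently, the root is labelled $\oplus$ if and only if the canonical decomposition has $\beta=\oplus$, if and only if $\nu$ is $\oplus$-decomposable. This shows that $\CanTree$ restricts to a size-preserving bijection $\mathcal{C}_{\nonp} \to \mathcal{T}_{\nonp}$.

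Composing this restriction with $\Pack:\mathcal{T}_{\nonp} \to \cP$ from \cref{prop:pack_tree} then yields the claimed size-preserving bijection $\Pack \circ \CanTree : \mathcal{C}_{\nonp} \to \cP$. There is no real obstacle here; all the combinatorial substance has already been established in \cref{prop:Can_tree,prop:treesOfSubsClosedClasses,prop:pack_tree}, and what remains is merely to check that the restriction of domains matches up, which reduces to a one-line reading of the root label of a canonical tree.
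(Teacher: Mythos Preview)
Your proof is correct and matches the paper's approach exactly: the paper presents this lemma as a direct summary of \cref{prop:Can_tree,prop:treesOfSubsClosedClasses,prop:pack_tree} without further argument, and you have simply made explicit the one detail left implicit there, namely that $\CanTree$ sends $\mathcal{C}_{\nonp}$ onto $\mathcal{T}_{\nonp}$ by reading the root label via \cref{Th:AlbertAtkinson}.
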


By Theorem~\ref{Th:AlbertAtkinson}, any $\oplus$-decomposable permutation corresponds uniquely to a sequence of at least two $\oplus$-indecomposable permutations.
Hence any permutation corresponds bijectively to a non-empty sequence of $\oplus$-indecomposable permutations. 
If we apply the bijection $\Pack\circ\CanTree$  to each we obtain a plane forest of packed trees. 
That is, it is an element of the collection $\Seq_{\ge 1}(\cP)$ of non-empty ordered sequences 
of $\widehat{\mathcal{G}(\mathcal{S})}$-decorated trees. 
We define the size of such a forest to be the total number of leaves.
The function that maps a permutation of $\mathcal{C}$ to the corresponding forest of packed trees is denoted by $\DF$
($\DF$ stands for decorated forest).
Summing up:

\begin{theorem}
	\label{te:bijection}
	The function 
	\[
	\DF: \mathcal{C} \to \Seq_{\ge 1}(\cP)
	\]
	is a size-preserving bijection between the substitution-closed class of permutations $\mathcal{C}$ 
	and the collection of forests of packed trees.
\end{theorem}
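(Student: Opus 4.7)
The plan is to assemble $\DF$ as the composition of two size-preserving bijections already established in this section. First, by Theorem~\ref{Th:AlbertAtkinson}, any permutation $\nu$ of size at least one corresponds uniquely to a non-empty sequence $(\nu^{(1)},\ldots,\nu^{(d)})$ of $\oplus$-indecomposable permutations: either $\nu$ itself is $\oplus$-indecomposable (in which case $d=1$, also covering the size-$1$ case), or $d\ge 2$ and $\nu = \oplus[\nu^{(1)},\ldots,\nu^{(d)}]$. Then I would apply Lemma~\ref{le:bij_perm_tree} coordinate-wise to each $\nu^{(i)}$ to obtain an element of $\Seq_{\ge 1}(\cP)$, which I declare to be $\DF(\nu)$.

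Next I would check that this construction is well-defined on $\mathcal{C}$ and is size-preserving. Each $\nu^{(i)}$ is a pattern of $\nu$, hence lies in $\mathcal{C}$ since $\mathcal{C}$ is a permutation class, so Lemma~\ref{le:bij_perm_tree} is applicable. Size preservation follows step by step: $|\nu| = \sum_{i=1}^{d} |\nu^{(i)}|$ by definition of the $\oplus$-substitution, and $\Pack\circ\CanTree$ preserves size by Lemma~\ref{le:bij_perm_tree}. The size of the forest on the right-hand side is defined as the total number of leaves, so the two quantities match.

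To establish bijectivity, I would construct an explicit inverse. Given $(P_1,\ldots,P_d)\in\Seq_{\ge 1}(\cP)$, set $\nu^{(i)} := (\Pack\circ\CanTree)^{-1}(P_i)$, which is a $\oplus$-indecomposable permutation in $\mathcal{C}$ by Lemma~\ref{le:bij_perm_tree}. Return $\nu^{(1)}$ if $d=1$, and $\oplus[\nu^{(1)},\ldots,\nu^{(d)}]$ if $d\ge 2$. The one verification required is that this preimage belongs to $\mathcal{C}$: this uses substitution-closedness together with the fact that $\oplus_d\in\mathcal{C}$ for every $d\ge 1$, which is obtained by iterating the standing assumption $12\in\mathcal{C}$ through substitution closure, e.g.\ inductively via $\oplus_{d+1} = 12[\oplus_d, 1]$. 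Mutual inverseness of the two maps then follows directly from the uniqueness statement in Theorem~\ref{Th:AlbertAtkinson} combined with the bijectivity of $\Pack\circ\CanTree$ from Lemma~\ref{le:bij_perm_tree}.

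I do not expect any serious obstacle here: the statement is essentially an assembly of the two previously established bijections. The only slightly delicate (but still routine) point is the verification that the inverse map lands in $\mathcal{C}$, which crucially uses both the standing hypothesis $12\in\mathcal{C}$ and the substitution-closedness of $\mathcal{C}$; without these, the forest-to-permutation direction could produce elements outside the class.
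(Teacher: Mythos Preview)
Your proposal is correct and follows essentially the same approach as the paper: decompose a permutation into its sequence of $\oplus$-indecomposable parts via Theorem~\ref{Th:AlbertAtkinson}, then apply the bijection $\Pack\circ\CanTree$ of Lemma~\ref{le:bij_perm_tree} componentwise. You are more explicit than the paper in verifying that the inverse map lands in $\mathcal{C}$ (using $12\in\mathcal{C}$ and substitution-closedness), but this is a routine check that the paper leaves implicit.
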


\subsection{Reading patterns in trees}
\label{ssec:patterns_subtrees}
Let us consider a permutation $\nu$ in $\mathcal C_{\nonp}$ and the associated canonical and packed trees:
$T=\CanTree(\nu)$ and $P=\Pack(T)$.
Let $I$ be a subset of $[n]$. 
Using \cref{rk:Leaves_Elements},
$I$ can be seen as a subset of the leaves of $T$ (or $P$).
The purpose of this section is to explain how to read out the pattern $\pi = \pat_I(\nu)$ on the trees $T$ or $P$.

Let us first note that a pattern $\pi = \pi(1) \dots \pi(k)$ is entirely determined when we know,
for each $i_1<i_2$, whether $\pi(i_1) \pi(i_2)$ forms an \emph{inversion} 
(\emph{i.e.}, an occurrence of the pattern $21$) or a non-inversion (occurrence of $12$). 
Therefore, to read patterns on $T$ (or $P$),
we should explain how to determine,
for any two leaves $\ell_1$ and $\ell_2$ of $I$, whether
the corresponding elements of $\nu$ form an inversion or not
(in the sequel, we will simply say that $\ell_1$ and $\ell_2$ form an inversion,
and not refer anymore to the corresponding elements of $\nu$). 
\medskip

Looking at $T$, this is rather easy.
We consider the closest common ancestor of $\ell_1$ and $\ell_2$, call it $v$.
By definition, $\ell_1$ and $\ell_2$ are descendants of different children of $v$, 
say the $i_1$-th and $i_2$-th.
Then the following holds: $\ell_1$ and $\ell_2$ form an inversion in $\nu$
if and only if $i_1$ and $i_2$ form an inversion in the decoration $\beta$ of $v$.
\medskip

Let us now look at $P$. We consider the closest common ancestor $u\in P$ 
of $\ell_1$ and $\ell_2$ and as before,
we assume that $\ell_1$ and $\ell_2$ are descendants of the $i_1$-th and $i_2$-th children
of $u$. 
Note that, in the packing bijection, the vertex $u$
corresponds to $v$ (the common ancestor of $\ell_1$ and $\ell_2$ in $T$)
potentially merged with other vertices.

Consider first the case that $u$ is decorated by an $\cS$-gadget $G$.
Then $G$ contains the information of the decoration of all vertices merged into $u$,
including $v$.
Therefore, whether $\ell_1$ and $\ell_2$ form an inversion in $\nu$
can be determined by looking at the $i_1$-th and $i_2$-th leaves 
of the gadget $G$ (see the example below).

If on the contrary $u$ is not decorated by an $\cS$-gadget but by a $\circledast$, 
we need to determine whether $v$ is decorated with $\oplus$ (implying that $\ell_1$ and $\ell_2$ form a non-inversion) or $\ominus$ (resp., an inversion). 

Assume first that there is a closest ancestor $u'$ of $u$ 
that is decorated with an $\cS$-gadget. 
In this case, we claim that
$v$ is decorated by $\ominus$ if $d(u,u')$ is odd, and it is decorated by $\oplus$ if $d(u,u')$ is even.
Indeed, decorations $\oplus$ and $\ominus$ alternate,
and, by construction of the packing bijection,
the vertex just above an $\cS$-gadget is decorated by a $\ominus$.

It remains to analyse the case where $u$ is decorated by $\circledast$, 
as well as all vertices on the path from $u$ to the root $r$ of $P$. 
By construction, this implies that the root of $T$ is decorated by $\ominus$. 
So, using again the alternation of $\oplus$ and $\ominus$ in $T$, 
the decoration of $v\in T$ is $\ominus$ if $d(u,r)$ is even, 
and $\oplus$ if $d(u,r)$ is odd.
\medskip

We note in particular that the pattern induced by a set $I$ of leaves in $P$
is determined by any fringe subtree containing all leaves of $I$
and {\em rooted at any vertex decorated with an $\cS$-gadget}.

\begin{example}\label{ex:reconstruction_pattern}
	Let  $\nu = 13 \, 12 \, 5\, 3\, 4\, 2\, 6\, 11\, 9\, 10\, 1\, 7\,8 $ be a permutation in $\mathcal C_{\nonp}$ with associated canonical and packed trees $T=\CanTree(\nu)$ and $P=\Pack(T)$ shown in Fig.~\ref{fig:reconstruction_pattern}. 
	We explain in the following example how to read out in $P$ the pattern induced by the leaves $\ell_1$, $\ell_2$ and $\ell_3$. 
	
	The closest common ancestor $u\in P$ of $\ell_1$ and $\ell_2$ is decorated with a $\circledast$, 
	which is at distance $1$ from its closest ancestor decorated with an $\cS$-gadget.
	We can conclude that the leaves $\ell_1$ and $\ell_2$ induce an inversion
	(the closest ancestor $v$ of $\ell_1$ and $\ell_2$ in $T$ carries a $\ominus$ decoration).
	
	Now consider $\ell_1$ and $\ell_3$. Their closest common ancestor $u'$ in $P$
	is decorated with an $\cS$-gadget. 
	Note $\ell_1$ and $\ell_3$ are descendants of the first and fifth
	children of this $\cS$-gadget; the corresponding leaves of the $\cS$-gadget
	have the vertex decorated by $2413$ as common ancestor and are attached to the branches
	corresponding to $2$ and $3$.
	We deduce that $\ell_1$ and $\ell_3$ do not form an inversion in $\nu$.
	Similarly, $\ell_2$ and $\ell_3$ do not form an inversion either in $\nu$.
	
	Putting all together, the pattern induced by $\ell_1$, $\ell_2$ and $\ell_3$ is $213$.
	Let us check that it is indeed the case, by reading this pattern on the permutation.
	These three leaves correspond to the 4th, 6th and 12th elements of the permutation respectively,
	which have values 3, 2 and 7. The induced pattern is indeed $213$.
\end{example}
\begin{figure}[htbp]
	\centering
	\includegraphics[height=6cm]{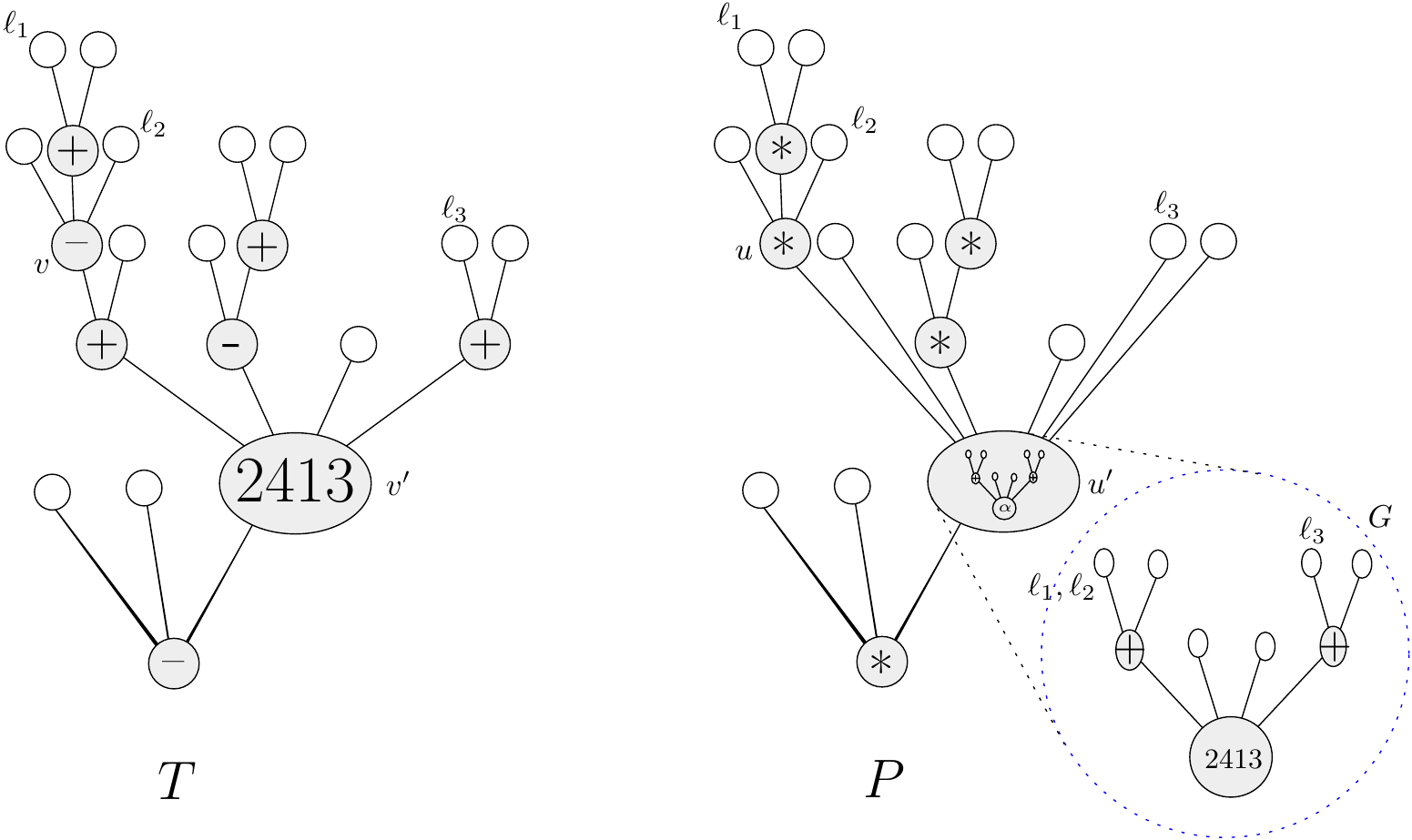}
	\caption{Reading patterns from trees -- see \cref{ex:reconstruction_pattern}}
	\label{fig:reconstruction_pattern}
\end{figure}

\section{Random permutations and conditioned Galton--Watson trees}
\label{sec:indecomposable_GW}

{\em 
	Throughout this section and the rest of the paper we assume that $\mathcal{C}$ is a \emph{proper} substitution-closed class of permutations, that is we exclude the case where $\mathcal{C}$ is the class of all permutations. To avoid trivial cases, we furthermore assume that $12, 21 \in \mathcal{C}$.}

\medskip

Theorem~\ref{te:bijection} allows us to see a uniform random  permutation $\bm{\nu}_n$ of size $n$ in the substitution-closed permutation class $\mathcal{C}$ as a uniform random forest of packed trees with $n$ leaves.\label{def:bmnu}
In the present section we apply Gibbs partition methods~\cite{stufler2016gibbs} to show that a giant component with size $n - O_p(1)$ emerges, and the small fragments admit a limit distribution. 
This goal is achieved in Proposition~\ref{prop:giant_comp_perm}. 
Since the size of the small fragments is stochastically bounded, this reduces the study of $\bm{\nu}_n$ to that of a uniform random packed tree with $n$ vertices. 
The strength of this approach is that we do not need to make any additional assumptions on the class $\mathcal{C}$. 

\subsection{Enumerative observations}
\label{sec:enum}

Theorem~\ref{te:bijection} implies that the generating series of the class $\mathcal{C}$ satisfies
\begin{align}
\mathcal{C}(z) = \frac{\cP(z)}{1- \cP(z)}.
\end{align}

From the definition of packed trees, we deduce the following equation for their generating series:
\begin{align}
\label{eq:P}
\cP(z) = z + \mathcal Q( \cP(z)),
\end{align}
where $\mathcal Q(u)=\widehat{\mathcal{G}(\mathcal{S})}(u)$ is defined as the generating function of $\widehat{\mathcal{G}(\mathcal{S})}$.
Via basic algebraic manipulations, we rewrite this as
\begin{align}
\label{eq:A}
\cP(z) = z \left( 1+ \frac{\mathcal Q( \cP(z))}{\cP(z)-\mathcal Q( \cP(z))} \right) = z \left( \frac{\cP(z)}{\cP(z)-\mathcal Q( \cP(z))} \right)
=z\, \mathcal{R}( \cP(z)),
\end{align}
with
\begin{align}
\label{eq:R}
\mathcal{R}(u) = 
\frac{1}{1 - \mathcal Q(u)/u}.
\end{align}
By definition, an $\SSS$-gadget is described by a simple permutation of size say $k$,
and $k$ elements, which are either atoms (elements of size one) or increasing permutations
of size at least two.
Therefore
\[
\mathcal{G}(\mathcal{S})(z) = \mathcal{S}\big( z + \tfrac{z^2}{1-z} \big) = \mathcal{S}\big(\tfrac{z}{1-z} \big),
\]
and consequently, 
\begin{equation}
\label{eq:G}
\mathcal{Q}(z) = \widehat{\mathcal{G}(\mathcal{S})}(z)  = \frac{z^2}{1-z} + \mathcal{S}\left(\frac{z}{1-z}\right).
\end{equation}
Since we assumed that $\mathcal{C}$ is proper, 
a celebrated result by  Marcus and Tardos~\cite{MR2063960} states
that the generating series $\mathcal{C}(z)$ has positive radius of convergence. 
Hence the same holds for $\mathcal{S}(z)$, and consequently, for $\mathcal{Q}(z)$ and $\mathcal{R}(z)$.
A general result on solutions of implicit equations (such as \eqref{eq:A})~\cite[Lem. 3.3]{stufler2016gibbs}
implies that the $n$-th coefficient $p_n$ of $\cP(z)$ satisfies the subexponentiality condition
\begin{equation}
\label{eq:subexp}
\frac{p_n}{p_{n+1}} \to \rho_{\cP} \qquad \text{and} \qquad \frac{1}{p_n} \sum_{i+j = n} p_i p_j \to 2 \cP(\rho_{\cP}) < \infty,
\end{equation}
as $n \to \infty$, with $0 < \rho_{\cP}< \infty$ denoting the radius of convergence of $\cP(z)$. This even implies
\begin{align}
\label{eq:subc}
\cP(\rho_{\cP}) < 1.
\end{align}
Indeed, if  $ 1 \le \cP(\rho_{\cP}) <\infty$, then there would exist a number $0 \le t \le \rho_{\cP}$ with $\cP(t) =1$ and hence $\widehat{\mathcal{G}(\mathcal{S})}(\cP(t)) = \infty$ by \eqref{eq:G}. But this is not possible by \cref{eq:A,eq:R}.

\cref{eq:subexp,eq:subc} allow us apply \cite[Thm. 4.8, 4.30]{MR3097424} (or \cite[Thm. 1]{MR0348393}), yielding that the number $c_n$ of $n$-sized permutations in $\mathcal{C}$ satisfies
\begin{equation}
c_n \sim \frac{p_n}{(1 - \cP(\rho_{\cP}))^{2}}.
\end{equation}

\begin{remark}
	\cref{eq:P} identifies the class $\cP$ as so-called $\mathcal{Q}$-enriched parenthesizations. A classical bijection due to Ehrenborg and Méndez  \cite{ehrenborg1994schroder} consequently allows us to identify the class~$\cP$ with the class of $\cR$-enriched trees. The recursive equation $\cP(z) = z\cR(\cP(z))$ with $\cR$ given in \cref{eq:R} is actually a consequence of this general bijection.
\end{remark}

\subsection{A giant $\oplus$-indecomposable component}

\label{subsec:Large_Indec_Component}

Let $\nu$ be a permutation in the proper substitution-closed class of permutations $\mathcal C$.
From \cref{Th:AlbertAtkinson}, we know that 
\begin{itemize}
	\item either $\nu$ is $\oplus$-indecomposable,
	\item or $\nu$ can be uniquely written as $\nu=\oplus[\nu^{(1)},\ldots,\nu^{(d)}]$, 
	where $d \ge 2$ and $\nu^{(1)},\ldots,\nu^{(d)} \in \mathcal{C}_{\nonp}$, the set of $\oplus$-indecomposable permutations of $\mathcal{C}$.
\end{itemize}
In the first case, we set $d=1$ and $\nu^{(1)}=\nu$ for convenience. Recall that \cref{le:bij_perm_tree} allows us to identify the classes $\mathcal{C}_{\nonp}$ and $\cP$. The subexponentiality condition~\eqref{eq:subexp} allows us to apply the Gibbs partition result~\cite[Thm. 3.1]{stufler2016gibbs} to obtain the following result
(only the first part will be useful in this paper, but we state the whole version for completeness):
\begin{proposition}\label{prop:giant_comp_perm}
	Let $\bm \nu_n$ be a uniform random permutation of size $n$ in $\mathcal C$ and define $\bm d$, $\bm{\nu^{(1)}}$, \ldots, $\bm{\nu^{(d)}}$ as above.
	Let $\bm{m}$ be the smallest  index such that $|\bm{\nu^{(m)}}|=\max(|\bm{\nu^{(1)}}|,\ldots, |\bm{\nu^{(d)}}|)$. Then $\bm{\nu^{(m)}}$ has size $n - O_p(1)$, and conditionally on its size,
	$\bm{\nu^{(m)}}$ is uniformly distributed among all $|\bm{\nu^{(m)}}|$-sized  $\oplus$-indecomposable permutations in $\mathcal{C}$.

	Moreover, the other components converge jointly in distribution:
	\[
	((\bm{\nu^{(1)}}, \ldots, \bm{\nu^{(\bm{m}-1)}}), ( \bm{\nu^{(\bm{m}+1)}}, \ldots, \bm{\nu^{(\bm d)}})) \convdis ((\bar{\bm \nu}_1, \ldots, \bar{\bm \nu}_{\rv{G}_1}),(\tilde{\bm \nu}_1, \ldots, \tilde{\bm \nu}_{\rv{G}_2}))
	\]
	with $\rv{G}_1, \rv{G}_2$ denoting i.i.d.\ geometric random variables with distribution
	\[
	\Proba(\rv{G}_i = k) = \cP(\rho_{\cP})^k (1 - \cP(\rho_{\cP})), \quad k \ge 0,
	\] 
	and $\bar{\bm{\nu}}_i$, $\tilde{\bm{\nu}}_i$, $i \ge 1$, denoting independent copies of a Boltzmann-distributed random object $\bm{\nu}$ with distribution given by
	\[
	\Proba(\bm{\nu} = \nu) = \rho_{\cP}^{|\nu|} / \cP(\rho_{\cP}).
	\]
\end{proposition}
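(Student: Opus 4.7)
The plan is to recognize the decomposition of $\bm{\nu}_n$ into its $\oplus$-indecomposable components as a Gibbs partition over the class $\cP$ of $\cS$-packed trees and invoke the convergent Gibbs partition theorem \cite[Thm.~3.1]{stufler2016gibbs}. Indeed, Theorem~\ref{Th:AlbertAtkinson}, together with the convention $d=1$ when $\nu$ is $\oplus$-indecomposable, yields a size-preserving bijection $\mathcal{C} \leftrightarrow \Seq_{\ge 1}(\mathcal{C}_{\nonp})$, and composing componentwise with $\Pack \circ \CanTree$ gives the identification $\mathcal{C} \cong \Seq_{\ge 1}(\cP)$ of Theorem~\ref{te:bijection}. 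Under the uniform measure on $\mathcal{C}^n$, this identification puts a standard Gibbs weight on sequences of $\cP$-objects of total size $n$: the joint law of the component sizes $(|\bm{\nu^{(1)}}|,\ldots,|\bm{\nu^{(\bm{d})}}|)$ is proportional to $\prod_i p_{|\bm{\nu^{(i)}}|}$, and conditionally on the sizes, each block is an independent uniformly random $\oplus$-indecomposable permutation of that size.

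The hypothesis of \cite[Thm.~3.1]{stufler2016gibbs} is the subexponentiality of $(p_n)$, which has already been established in~\eqref{eq:subexp} of Subsection~\ref{sec:enum}, as a consequence of the implicit equation~\eqref{eq:A} for $\cP(z)$. In addition, \eqref{eq:subc} ensures $\cP(\rho_{\cP}) < 1$, so the Boltzmann measure on $\mathcal{C}_{\nonp}$ at parameter $\rho_{\cP}$ is a (strict) sub-probability measure and the geometric distribution of parameter $\cP(\rho_{\cP})$ is well-defined. Feeding these ingredients into the Gibbs partition theorem yields all three assertions simultaneously: (i) the largest block has size $n - O_p(1)$ (the ``gelation'' regime); (ii) the conditional distribution of the giant component $\bm{\nu^{(\bm{m})}}$ given its size is uniform on $\oplus$-indecomposable permutations of that size, which already follows from the product form of the Gibbs weights independently of the large-block analysis; (iii) the two blocks of small components on either side of the giant one converge jointly to two independent copies of a Boltzmann-distributed sequence of geometric length, with $\Proba(\rv{G}_i = k) = \cP(\rho_{\cP})^k(1-\cP(\rho_{\cP}))$ and each entry drawn independently from the Boltzmann distribution $\Proba(\bm{\nu}=\nu) = \rho_{\cP}^{|\nu|}/\cP(\rho_{\cP})$ on $\mathcal{C}_{\nonp}$.

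Hence the bulk of the argument is already carried out in Subsection~\ref{sec:enum}, where subexponentiality was derived. I do not expect any genuine obstacle in the present proof; the only point requiring some care is the bookkeeping translation between the ``sequence of $\oplus$-indecomposable permutations'' viewpoint of Theorem~\ref{Th:AlbertAtkinson} and the ``sequence of $\cS$-packed trees'' viewpoint of Theorem~\ref{te:bijection}, which is handled by Lemma~\ref{le:bij_perm_tree}. Everything else is a direct application of a black-box Gibbs partition result.
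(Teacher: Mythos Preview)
Your proposal is correct and matches the paper's own argument essentially verbatim: the paper also derives the proposition as a direct application of \cite[Thm.~3.1]{stufler2016gibbs}, using the identification $\mathcal{C}_{\nonp}\cong\cP$ from Lemma~\ref{le:bij_perm_tree} together with the subexponentiality~\eqref{eq:subexp} and the bound~\eqref{eq:subc} established in Subsection~\ref{sec:enum}.
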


\begin{remark}
	We excluded the case of uniform unrestricted $n$-sized permutations. In this case, it is well-known that the permutation is with high probability $\oplus$-indecomposable. 
	This follows for example from~\cite[Cor. 6.19]{stufler2016limits} in the tree literature or from~\cite[Thm 3.4]{PerfectSorting} in the permutation literature.
\end{remark}

\subsection{From permutations to simply generated trees}
\label{subsec:PackedTrees_GW}

Proposition~\ref{prop:giant_comp_perm} and Lemma~\ref{le:bij_perm_tree} 
reduce the study of the proper substitution-closed class $\mathcal{C}$ to the study of the class $\cP$ of packed trees.
In this section, we explain how a random tree in $\cP$ can be seen as a random simply generated tree with random decorations. 
This result may be seen as a special case of a sampling procedure~\cite[Sec. 6.4]{stufler2016limits}
for general enriched trees with a fixed number of leaves (so called enriched Schr\"oder parenthesizations),
but we present it in our specific setting to make the article more self-contained.
\medskip

We can describe a packed tree $\PackedTree$ as a pair $(T,\lambda_T)$
where $T$ is a rooted plane tree and $\lambda_T$ is a map
from the internal vertices of $T$ 
to the set $\mathcal{Q} = \widehat{\GGG(\mathcal{S})}$ which records the decorations of the vertices. 

In order to sample a uniform packed tree with $n$ leaves, we first simulate a random rooted plane tree $\bm{T}_n$ 
and then a random decoration map $\bm{\lambda}_{\bm{T}_n}$ as follows.

Define the weight-sequence $\myvec{q} = (q_k)_{k \ge 0}$,
where, for $k \ge 2$, $q_k$ denotes the $k$-th coefficient of the generating series $\mathcal Q(z)=\widehat{\GGG(\mathcal{S})}(z)$,
while we set $q_0=1$ and $q_1=0$.
We consider the simply generated tree $\bm{T}_n$ (with $n$ leaves) associated 
with weight-sequence $\myvec{q},$ \emph{i.e.,} by definition, $\bm{T}_n$ is a random rooted plane tree such that
\begin{equation}\label{eq:simply_gen_distrib}
\Proba(\bm{T}_n=T)=\frac{\prod_{v\in T} q_{d^+(v)}}{Z_n}=\frac{\prod_{v\in \Vint(T)} q_{d^+(v)}}{Z_n},
\end{equation}
for all rooted plane trees $T$ with $n$ leaves (we recall that $\Vint(T)$
denotes the set of internal vertices of $T$). 
Here, $Z_n$ is the \emph{partition function} given by 
\[
Z_n=\sum_{T}\prod_{v\in T} q_{d^+(v)},
\] 
where the sum runs over all rooted plane trees with $n$ leaves.
For a general introduction about simply generated trees see \cite[Section 2.3]{janson2012simply}.

Then, given a rooted plane tree $T$, let $\bm{\lambda}_T$ be the random map such that for all internal vertices $v$ of~$T$,
\begin{align}\label{eq:decoration_distrib}
\Proba(\bm{\lambda}_T(v)=Q)=\frac{1}{q_{d^+_T(v)}} \quad\text{for all $Q \in \mathcal{Q}$ with $|Q| = d^+_T(v)$},
\end{align}
independently of all other choices. Namely, the decoration of each internal vertex $v$ of $T$ gets drawn uniformly at random among all $d^+_T(v)$-sized decorations in $\mathcal{Q}$, independently of all the other decorations. 

\begin{lemma}
	\label{lem:unif_packed_tree} \label{def:Pn}
	The random packed tree $\bm{P}_n = (\bm{T}_n,\bm{\lambda}_{\bm{T}_n})$ is uniform among all the packed trees with $n$ leaves.
\end{lemma}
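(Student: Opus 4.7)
The plan is a direct bookkeeping argument, multiplying the law of the shape by the law of the decorations and checking that everything cancels except the normalization constant.

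First I would fix a target packed tree $P = (T_0, \lambda_0)$ with $n$ leaves and compute $\Pr(\bm{P}_n = P)$ by conditioning on the shape. By construction the event $\{\bm{P}_n = P\}$ decomposes as $\{\bm{T}_n = T_0\} \cap \{\bm{\lambda}_{T_0} = \lambda_0\}$, and the two factors are independent once we condition on the shape. Using \eqref{eq:simply_gen_distrib} and the independence across internal vertices in \eqref{eq:decoration_distrib}, I get
\[
\Pr(\bm{P}_n = P) = \frac{\prod_{v \in T_0} q_{d^+(v)}}{Z_n} \cdot \prod_{v \in \Vint(T_0)} \frac{1}{q_{d^+(v)}}.
\]
Here I need the observation that the factors in the first product indexed by leaves are all $q_0 = 1$, so the first product can be written as $\prod_{v \in \Vint(T_0)} q_{d^+(v)}$. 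Hence the two products telescope to $1$, and
\[
\Pr(\bm{P}_n = P) = \frac{1}{Z_n},
\]
a quantity that does not depend on $P$.

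It remains to identify $Z_n$ with the number of packed trees with $n$ leaves. For a fixed shape $T$ with $n$ leaves, the number of ways of decorating it to obtain an $\mathcal{S}$-packed tree is exactly $\prod_{v \in \Vint(T)} q_{d^+(v)}$, because each internal vertex $v$ must receive, independently, a decoration in $\mathcal{Q}$ of size $d^+(v)$. Summing over $T$ and again using $q_0 = 1$ to absorb the leaf contributions gives
\[
\#\{\text{packed trees with } n \text{ leaves}\} = \sum_T \prod_{v \in \Vint(T)} q_{d^+(v)} = \sum_T \prod_{v \in T} q_{d^+(v)} = Z_n.
\]
One small point deserves care: only plane trees avoiding outdegree $1$ actually contribute, since $q_1 = 0$ forces any such shape to have weight zero; this is automatic in the sum and is consistent with the fact that a packed tree never has an internal vertex of outdegree $1$ (no element of $\mathcal{Q}$ has size $1$).

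The two computations combined show that $\Pr(\bm{P}_n = P) = 1/Z_n$ for every packed tree $P$ of size $n$, and that $Z_n$ is the total number of such trees, whence $\bm{P}_n$ is uniform. I do not expect any real obstacle; the only delicate step is the matching of leaves with the $q_0 = 1$ convention, which is exactly what makes the simply generated framework encode the enriched model with a fixed number of leaves rather than a fixed total number of vertices.
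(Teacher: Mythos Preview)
Your proof is correct and follows essentially the same approach as the paper: condition on the shape, multiply the simply generated weight by the decoration probability, and observe the cancellation giving $1/Z_n$. Your additional remarks identifying $Z_n$ with the number of packed trees of size $n$ and noting the roles of $q_0=1$ and $q_1=0$ are correct and make explicit what the paper leaves implicit.
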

\begin{proof}
	Let $\PackedTree=(T,\lambda)$ be a packed tree with $n$ vertices. Then
	\begin{equation}
	\begin{split}
	\Proba\big((\bm{T}_n,\bm{\lambda}_{\bm{T}_n})=(T,\lambda)\big)&=\Proba\big((\bm{T}_n,\bm{\lambda}_{\bm{T}_n})=(T,\lambda)\big|\bm{T}_n=T\big)\cdot\Proba(\bm{T}_n=T)\\
	&=\left(\prod_{v\in \Vint(T)}\frac{1}{q_{d^+_T(v)}}\right)\cdot\left(\frac{\prod_{v\in T}q_{d^+_T(v)}}{Z_n}\right)=\frac{1}{Z_n},
	\end{split}
	\end{equation}
	where in the second equality we use \cref{eq:decoration_distrib,eq:simply_gen_distrib}. 
\end{proof}

\subsection{Random packed trees as conditioned Galton--Watson trees}

Building on \cref{lem:unif_packed_tree}, in what follows we explain how to sample a uniform packed tree with $n$ leaves
as a randomly decorated  Galton--Watson tree conditioned on having $n$ leaves.
Again, we refer to~\cite[Sec. 6.4]{stufler2016limits}
for a discussion in a more general context.
\medskip

Let $\rho_q$ denote the radius of convergence of the generating series ${\mathcal Q}(z)$.
As observed in Section~\ref{sec:enum}, it holds that $\rho_q > 0$.
As we shall see, this implies that $\bm{T}_n$ has the distribution of a Galton--Watson tree conditioned of having $n$
leaves, whose offspring distribution $\xi$ is defined below
(for similar discussion with fixed number of vertices,
see \cite[Section 4]{janson2012simply}).

The offspring distribution $\xi$ is given by 
\begin{align}
\label{eq:offspring_distribution_packed_tree}
\begin{cases}
\Proba(\xi = 0) = a\\
\Proba(\xi = 1) = 0\\
\Proba(\xi = k) = q_k t_0^{k-1} \text{ for } k  \geq 2.
\end{cases}
\end{align}
with $a, t_0>0$ constants that are defined as follows. 
If $\lim_{z \nearrow \rho_q} {\mathcal Q}'(z) \ge 1$, let $0<t_0\le \rho_q$ be the unique number with ${\mathcal Q}'(t_0) = 1$. 
If the limit is less than $1$, then set $t_0 = \rho_q$.
Finally set $a = 1 - \sum_{k \ge 2} q_k t_0^{k-1} >0$.

Note that the tilting in \cref{eq:offspring_distribution_packed_tree} previously appeared in \cite[Proposition 2]{MR3378819} (see also the discussion above Corollary 1 in the same paper).

We note that $\xi$ is always aperiodic since $q_k>0$ for $k \ge 2$ (because of the $\circledast$ decorations).
Moreover, we have
\begin{align}
\E[\xi] = \mathcal Q'(t_0) \le 1,
\end{align}
so that the Galton--Watson tree $\bm T^\xi$ of offspring distribution $\xi$ is either subcritical or critical.
It is a simple exercise to check that $\bm T^\xi$, conditioned on having $n$ leaves,
has the same distribution as the simply generated tree $\bm{T}_n$ defined by \cref{eq:simply_gen_distrib}.

To end this section, we characterize when this Galton--Watson tree model is critical.
Below, we write $\cS'(\rho_\cS)$ for $\lim_{z \nearrow \rho_\cS} \cS'(z)$, noting that this limit may be infinite. 

\begin{proposition}
	\label{prop: offspring_distr_charact}
	It holds that $\E[\xi] = 1$ if and only if
	\begin{align}
	\label{eq:type1}
	\cS'(\rho_\cS) \ge \frac{2}{(1 +\rho_\cS)^2} -1.
	\end{align}
	In this case, $t_0 = \kappa/(1+\kappa)$ for the unique number $0  < \kappa \le \rho_\cS$ with $\cS'(\kappa) = 2/(1+\kappa)^2 -1$, and
	\begin{align}
	\V[\xi] = \kappa (1+\kappa)^3 \cS''(\kappa) + 4 \kappa.
	\end{align}
\end{proposition}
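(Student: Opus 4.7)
The plan is to use the explicit formula $\mathcal{Q}(z) = \frac{z^2}{1-z} + \cS\bigl(\frac{z}{1-z}\bigr)$ from \eqref{eq:G} together with the identities $\E[\xi] = \mathcal{Q}'(t_0)$ and, when $\E[\xi] = 1$, $\V[\xi] = t_0 \mathcal{Q}''(t_0)$, and to reduce the proposition to a direct computation via the substitution $\kappa = t_0/(1-t_0)$.

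For the equivalence part, first observe that $\E[\xi] = 1$ iff $\mathcal{Q}'(t_0) = 1$, which by the definition of $t_0$ happens iff $\lim_{z \nearrow \rho_q} \mathcal{Q}'(z) \geq 1$. Differentiating \eqref{eq:G},
\[ \mathcal{Q}'(z) = \frac{z(2-z)}{(1-z)^2} + \frac{1}{(1-z)^2}\, \cS'\!\left(\frac{z}{1-z}\right). \]
Since $\rho_\cS < \infty$ by Marcus--Tardos, the radius of convergence of $\mathcal{Q}$ equals $\rho_q = \rho_\cS/(1+\rho_\cS) < 1$, and $z = \kappa/(1+\kappa)$ defines an increasing bijection $[0,\rho_q] \to [0,\rho_\cS]$. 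Using $(1-t_0)^{-2} = (1+\kappa)^2$ and the identity $t_0(2-t_0)(1+\kappa)^2 = (1+\kappa)^2 - 1$ (a short algebraic check), the equation $\mathcal{Q}'(t_0) = 1$ rewrites as $\cS'(\kappa) = 2/(1+\kappa)^2 - 1$. Now $\cS'$ is non-decreasing (non-negative coefficients) while $\kappa \mapsto 2/(1+\kappa)^2 - 1$ is strictly decreasing, and $\cS'(0) = 0 < 1$ since $\cS$ has no term of degree $0$ or $1$. By the intermediate value theorem applied to the continuous function $\cS'$ on $[0,\rho_\cS)$, a (necessarily unique) crossing in $[0,\rho_\cS]$ exists if and only if $\cS'(\rho_\cS) \geq 2/(1+\rho_\cS)^2 - 1$, which is precisely \eqref{eq:type1}. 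This yields the equivalence and the uniqueness of $\kappa$ simultaneously.

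For the variance, assuming $\E[\xi] = 1$, the identity $\V[\xi] = \E[\xi(\xi-1)] = t_0 \mathcal{Q}''(t_0)$ reduces the task to computing $\mathcal{Q}''(t_0)$. A second differentiation of \eqref{eq:G} gives
\[ \mathcal{Q}''(z) = \frac{2}{(1-z)^3}\bigl(1 + \cS'(u(z))\bigr) + \frac{1}{(1-z)^4}\, \cS''(u(z)), \qquad u(z) = \frac{z}{1-z}. \]
At $z = t_0$, the criticality relation forces $1 + \cS'(\kappa) = 2/(1+\kappa)^2$, collapsing the first summand to $4(1+\kappa)$. Multiplying by $t_0 = \kappa/(1+\kappa)$ then yields $\V[\xi] = \kappa(1+\kappa)^3 \cS''(\kappa) + 4\kappa$, as announced. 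The only real obstacle is careful bookkeeping in the algebraic simplifications (in particular the two identities $t_0(2-t_0)(1+\kappa)^2 = (1+\kappa)^2 - 1$ and $2(1+\kappa)^3 \cdot 2/(1+\kappa)^2 = 4(1+\kappa)$); no conceptual difficulty is expected, as the proposition is essentially a change-of-variables exercise combined with a monotonicity argument.
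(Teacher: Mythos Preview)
Your argument is correct and follows essentially the same route as the paper: differentiate the explicit formula for $\mathcal{Q}$, perform the substitution $z=\kappa/(1+\kappa)$, and use the identities $\E[\xi]=\mathcal{Q}'(t_0)$ and $\V[\xi]=t_0\mathcal{Q}''(t_0)$. Your explicit monotonicity argument for the uniqueness of $\kappa$ is a nice addition that the paper leaves implicit. One small slip: Marcus--Tardos gives $\rho_\cS>0$, not $\rho_\cS<\infty$; when $\cS$ is finite or empty one has $\rho_\cS=\infty$ and $\rho_q=1$, but the substitution $[0,\rho_q)\to[0,\rho_\cS)$ and your crossing argument still go through verbatim in that case.
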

For the convenience of the reader, we note that the relation between $t_0$ and $\kappa$ can be rewritten as $\kappa = \frac{t_0}{1-t_0}$.

\begin{proof}
	It holds that
	\begin{align*}
	\mathcal{Q}'(z) = \frac{\cS'(z/(1-z)) + z(2-z)}{(1-z)^{2}}
	\end{align*}
	We perform the formally substitution $z=y/(1+y)$ (which implies $z = \rho_q \Leftrightarrow y=\rho_{\cS}$). This yields
	\begin{equation}
	\label{eq:Qp}
	\mathcal{Q}'(z) = (1+y)^2 \cS'(y) + y^2 + 2y=1+(1+y)^2\left[ \cS'(y) +1 -\frac{2}{(1+y)^2} \right].
	\end{equation}
	Recall that $\E[\xi]=1$ if and only if $\lim_{z \nearrow \rho_q} {\mathcal Q}'(z) \ge 1$.
	Since $\rho_q=\frac{\rho_{\mathcal S}}{1+\rho_{\mathcal S}}$, this shows the first part of the statement.
	The formula for $t_0$ also follows from \eqref{eq:Qp} and the definition of $t_0$.
	Finally, if $\E[\xi]=1$, then 
	\begin{align*}
	\V[\xi] 	&= \E[\xi(\xi-1)] = t_0 \mathcal{Q}''(t_0)  \\
	&= \kappa (1 + \kappa)^3 \cS''(\kappa) + 2 \kappa (1 + \kappa)^2 (\cS'(\kappa) +1) \\
	&= \kappa (1+\kappa)^3 \cS''(\kappa) + 4 \kappa.\qedhere
	\end{align*}
\end{proof}

%
%
%
%
%


\section{Semi-local convergence of the skeleton decomposition}
\label{sec:skeleton}

The previous section establishes a connection between uniform random permutations
and conditioned Galton-Watson trees.
In this section, we provide a convergence result for skeletons induced by marked vertices in such trees.
The application to permutations will be discussed in further sections.\medskip

Aldous~\cite[Eq. (49)]{MR1207226} showed that 
the subtree spanned by a fixed number of random marked vertices 
in a large critical Galton--Watson tree admits a limit distribution. 
Here, we extend this \emph{skeleton decomposition} 
so that it additionally describes the asymptotic  local structure in $o(\sqrt{n})$-neighbourhoods around the marked vertices
and their pairwise closest common ancestors.
Note also that Aldous works with Galton--Watson trees conditioned
on having $n$ vertices,
while we more generally consider
Galton--Watson trees conditioned
on having $n$ vertices with out-degree in a given set $\Omega$
(see \cite{MR3335013} or \cite{MR2946438} for scaling limit results under such conditioning). 

\subsection{Extracting the skeleton with a local structure}

Let $k \ge 1$ denote a fixed integer and $T$ a (rooted) plane tree. 
We choose an ordered sequence $\myvec{v} = (v_1, \ldots, v_k)$ of vertices in $T$
(possibly with repetitions) that we call \emph{marked vertices}.
The goal of this section is to associate to this data some
object recording:
\begin{itemize}
	\item the genealogy between the marked vertices;
	\item the local structure around the \emph{essential vertices} of $T$, 
	which we define as the root of $T$, the marked vertices $v_1, \ldots, v_k$ and their pairwise closest common ancestors;
	\item the distances in the original tree between these vertices.
\end{itemize}
The reader can look at \cref{fig:skeleton} to see the different steps of the construction.

\begin{figure}[htbp]
	\centering
	\includegraphics[width=10cm]{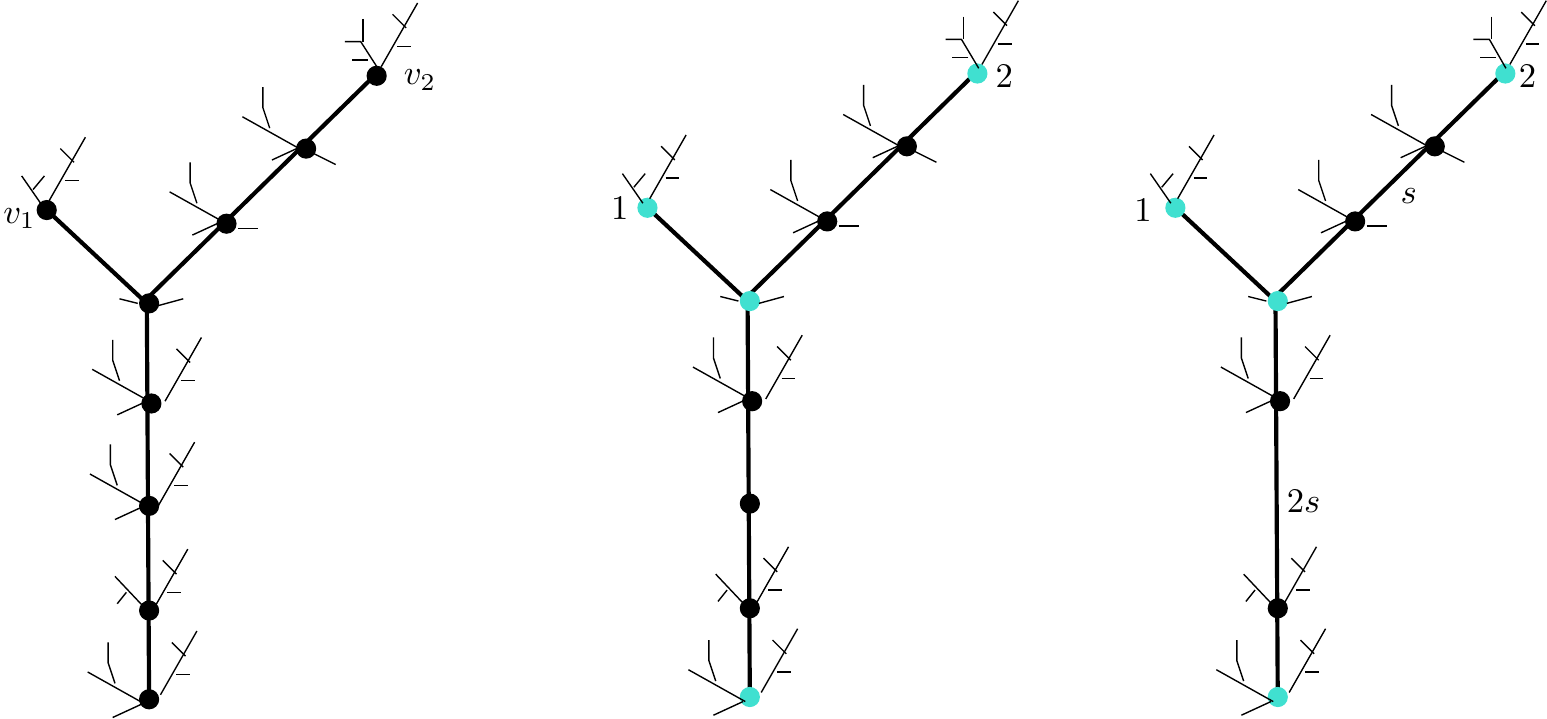}
	\caption{A tree $T$ with two marked vertices $v_1$ and $v_2$. 
		In the left-most picture, the subtree $R(T, \myvec{v})$ is represented in bold,
		while branches attached to its corner are drawn with thinner lines.
		The middle picture represent $R^{[1]}(T, \myvec{v})$:
		the essential vertices are in blue, and only one vertex of $R(T, \myvec{v})$
		is at distance more than 1 from the closest essential vertex.
		The branches attached to that vertex do not belong to $R^{[1]}(T, \myvec{v})$.
		The right-most picture represent $s.R^{[1]}(T, \myvec{v})$.
		In particular, observe that the two middle edges of the path 
		between the root and the branching vertex
		have been contracted into a single edge with label $2s$.}
	\label{fig:skeleton}
\end{figure}

The first step is to consider
the subtree $R(T, \myvec{v})$ consisting of the vertices $\myvec{v}$ and all their ancestors.
For each $1 \le j \le k$ the vertex $v_j$ in $R(T, \myvec{v})$ receives the label $j$. 
Note that the tree $T$ may be constructed from the skeleton $R(T, \myvec{v})$ by attaching an ordered sequence of branches (rooted plane trees) at each corner of $R(T, \myvec{v})$.  Here we have to consider the corner below the root-vertex twice, since branches at this corner may either be added to the left or to the right of $R(T, \myvec{v})$. 

The second step is to remove the vertices of $T$ 
which lie outside of the skeleton $R(T, \myvec{v})$ 
and are ``far'' from the essential vertices. 
For convenience, we call distance of any branch $B$ (grafted on $R(T, \myvec{v})$) 
from a vertex  $w \in R(T, \myvec{v})$ the distance in $R(T, \myvec{v})$ 
from $w$ to the corner where $B$ is attached. %
For any integer $t \ge 0,$ we let $R^{[t]}(T, \myvec{v})$ denote the subtree of $T$ that contains $R(T, \myvec{v})$ and all branches grafted on $R(T, \myvec{v})$ that have distance at most $t$ from at least one essential vertex. In particular, $R^{[t]}(T, \myvec{v})$ contains all vertices of  $T$ that lie at distance at most $t$ from the essential vertices.

The final step of the construction is to shrink the paths of $R^{[t]}(T, \myvec{v})$ 
consisting of the vertices whose attached branches have been removed in step 2. 
Indeed, we are interested in a scenario where the distance between any two  essential vertices is much larger than $2t$. Consider two essential  points $x \ne y$ that are connected by a path not containing other essential vertices. Assume that $x$ lies on the path from the root to $y$. If the distance between $x$ and $y$ is larger than $2t$, then the path joining $x$ and $y$ consists of a starting segment of length $t$ that starts at $x$, a \emph{middle segment} of positive length, and an end segment of length $t$ that ends at $y$.
By construction, the branches attached to inner vertices of the middle segment of $R(T,\myvec{v})$
do not appear in $R^{[t]}(T,\myvec{v})$.
For any real number $s>0$, we let $s.R^{[t]}(T, \myvec{v})$ 
denote the result of contracting each middle segment 
to a single edge that receives a label given 
by the product of $s$ and the number of deleted vertices in this segment. %

\subsection{The space of skeletons with a local structure}
\label{ssec:skeleton_space}

In the following, we will need to be more precise about 
the space in which $s.R^{[t]}(T, \myvec{v})$ lives and the topology we consider on it.
In the above construction, $s.R^{[t]}(T, \myvec{v})$ is a tree with $k$ distinguished vertices
with outdegree in $\Omega$,
where at most $2k-1$ edges have a (length-)label. 
Moreover, the distances between successive essential vertices
are at most $2t+1$ (we say that two essential vertices are successive if the path going from one to the other
does not contain any other essential vertex). 
The set of trees (without edge-labels) with $k$ marked distinguished vertices with outdegree in $\Omega$
such that the above distance condition holds is denoted $\setTkt$.
Moreover, we say that $G$ in $\setTkt$ is {\em generic} if:
\begin{itemize}
	\item there are $2k$ distinct essential vertices (the root, the $k$ distinguished vertices and $k-1$ closest
	common ancestors of pairs of distinguished vertices);
	\item the distances between successive essential vertices 
	are exactly $2t+1$.
\end{itemize}

We note that the edges with (length-)label
are middle edges of the paths of length $2t+1$ between essential vertices,
and hence depend only on the shape of the tree.
We can therefore encode these labels as a vector in $\RR^{2k-1}$, that has entries equal to $0$ whenever the corresponding essential vertices are at distance $2t$ or less.
Finally, $s.R^{[t]}(T, \myvec{v})$ can be seen as an element of
\[ \setTkt \times (\RR_+)^{2k-1}.\]
(A similar identification is done by Aldous throughout the article \cite{MR1207226}.)

Using the discrete topology on $\setTkt$ and the usual one on $\RR^{2k-1}$,
this gives a topology on $\setTkt \times (\RR_+)^{2k-1}$, 
and then it makes sense to speak of convergence in distribution in this space.
We can also speak of {\em density}, taking as reference measure
the product of the counting measure on $\setTkt$ and the Lebesgue measure on $(\RR_+)^{2k-1}$.
Finally we denote by $\Sh$ and $\Lab$ the natural projections
from $\setTkt \times (\RR_+)^{2k-1}$ to $\setTkt$ and $(\RR_+)^{2k-1}$, respectively.
In words $\Sh$ erases the labels and output the {\em shape} of the tree,
while $\Lab$ outputs the vector of labels.

\subsection{The limit tree}
\label{sec:limit_tree}

Throughout Section~\ref{sec:skeleton} we let $\bm T$ denote a (non-degenerate) critical Galton--Watson tree having an aperiodic offspring distribution $\xi$.
We also assume that $\xi$ has finite variance $\sigma^2$.
We fix a subset $\Omega \subseteq \mathbb{N}_0$ satisfying
\begin{align}
\Proba(\xi \in \Omega) > 0.
\end{align}
Given a rooted plane tree $T$, we let $|T|_\Omega$ denote the number of vertices $v \in T$ that have outdegree $d_T^+(v) \in \Omega$. 
For any value $n$ that the number $|\bm T|_\Omega$  can have with positive probability,
we let $\bm T_n^\Omega$ denote the result of conditioning the tree $\bm T$ on $|\bm T|_\Omega = n$.
The goal is to describe the limit of $R^{[t]}(\bm T_n^\Omega,\myvec{\rv{v}})$,
where $\myvec{\rv{v}}=(\bm v_1,\dots,\bm v_k)$ are independently and uniformly chosen vertices of $\bm T_n^\Omega$,
{\em conditioned} to have outdegree in $\Omega$.
\medskip

We first recall the definition of simply and doubly size-biased versions of $\xi$,
namely the random variables $\hat{\xi}$ and $\xi^*$ with distributions
\begin{align}
\label{eq:xihat}
\Proba(\hat{\xi} = i) &= i \Proba(\xi = i), \\
\label{eq:xistar}
\Proba(\xi^* = i) &= i(i-1)\Proba(\xi = i) / \sigma^2.
\end{align}

Furthermore, for any fixed integer $k \ge 1$ we say a \emph{proper $k$-tree} is a (rooted) plane tree that has precisely $k$ leaves, labelled from $1$ to $k$, such that the root has outdegree $1$ and all other internal vertices have outdegree $2$.  Note that each such tree has $2k -1$ edges
and that there are $k! \mathsf{Cat}_{k-1}= 2^{k-1} \prod_{i=1}^{k-1}(2i-1)$ such trees.
Indeed, up to the single edge attached to the root, these trees are complete binary trees with $k$ leaves and a labelling of these leaves.
In the following, we order the edges of proper $k$-tree in some canonical order (e.g. depth first search order),
so that we can speak of the $i$-th edge of the tree; the chosen order is not relevant though. 

\begin{figure}[htbp]
	\centering
	\includegraphics[height=7.5cm]{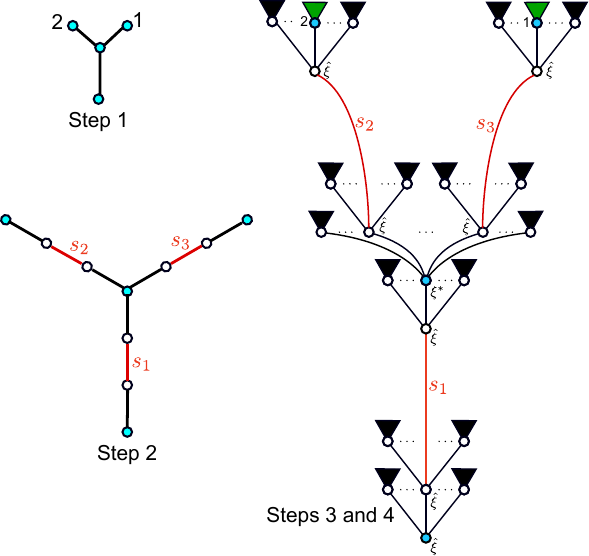}
	\caption{The construction of the limit tree ${\bm T}^{k,t}_\Omega$ for $k=2$ and $t=1$. The essential vertices are coloured blue, and the middle edges are coloured red. Each occurrence of $\hat{\xi}$ or $\xi^*$ at the side of a vertex represents that this vertex receives offspring according to an independent copy of the corresponding random variable (step 3).
		Each black triangle represents an independent copy of the Galton--Watson tree $\bm T$ (step 4).
		The green triangles represent independent copies of $\bm T$ conditioned on having root degree in $\Omega$
		(step 4). }
	\label{fig:ktree}
\end{figure}

For each integer $t \ge 1$ we can now construct a random rooted plane tree
${\bm T}^{k,t}_\Omega$ with $k$ distinguished vertices labelled from $1$ to $k$ having outdegree in $\Omega$, 
and $2k-1$ edges having length-labels.
We will prove later that this tree is the limit of $R^{[t]}(\bm T_n^\Omega,\myvec{\rv{v}})$.
A special case of this construction is illustrated in \cref{fig:ktree}. 
The general procedure goes as follows:
\begin{enumerate}[1.]
	\item \emph{(Pick a skeleton)} Draw a proper $k$-tree uniformly at random. 
	Its leaves will correspond to the distinguished labelled vertices of ${\bm T}^{k,t}_\Omega$.
	Each possible outcome of this step is attained with probability
	\[
	\frac{1}{2^{k-1} \prod_{i=1}^{k-1}(2i-1)}.
	\]
	\item \emph{(Stretch it)} Select a vector $\myvec{\rv{s}} = (s_i)_i \in \mathbb{R}_{>0}^{2k-1}$ at random with density 
	\begin{align}
	\label{eq:density}
	\textstyle  (3 \cdot 5 \cdots (2k-3)) \, (\sum_i s_i) \, \exp \left( - \tfrac{(\sum_i s_i)^2}{2} \right).
	\end{align}
	It is easy to check that this defines a probability distribution,
	using classical expressions for absolute moments of Gaussian distribution.
	For each $1 \le i \le 2k-1$, we replace the $i$-th edge of the $k$-tree by a path of length $2t+1$
	and assign label $s_i$ to the central edge of this path. 
	\item \emph{(Thicken it)} Each internal vertex receives additional offspring, independently from the rest. Here vertices with outdegree $1$ receive additional offspring according to an independent copy of $\hat{\xi} -1$, 
	while vertices with outdegree $2$ receive additional offspring according to an independent copy of $\xi^*-2$. An ordering of the total offspring that respects the ordering of the pre-existing offspring is chosen uniformly at random.
	\item \emph{(Graft branches)}
	Each distinguished vertex (\emph{i.e.,} each leaf of the original $k$-tree)
	becomes the root of an independent copy of $\bm T$ conditioned on having root-degree in $\Omega$.
	Other leaves of the tree resulting from step 3 
	become the roots of independent copies of Galton--Watson trees $\bm T$, 
	without conditioning.
\end{enumerate}

\begin{lemma}
	Seen as an element in $\setTkt \times (\RR_+)^{2k-1}$,
	the random tree ${\bm T}^{k,t}_\Omega$ has density 
	\[ f(G,\myvec u) = p_G \, h(\myvec u)\,  \One[G \text{ generic}],\]
	where, for a generic $G$ in $\setTkt$ and $\myvec u$ in $(\RR_+)^{2k-1}$, we have
	\begin{align}
	p_G&:=  \frac{\Proba(\xi \in \Omega)^{-k}}{\sigma^{2k-2} \, \prod_{i=1}^{k-1}(2i-1)}
	\prod_{v \in V_G} \Proba(\xi= d^+_G(v)); \label{eq:def_pG}\\
	h(\myvec u)&:=
	\textstyle \left[ \prod_{i=1}^{k-1}(2i-1) \right] \, (\sum_i u_i) \, \exp \left( - \tfrac{(\sum_i u_i)^2}{2} \right).\label{eq:def_h}
	\end{align}
	\label{lem:distrib_Tkt}
\end{lemma}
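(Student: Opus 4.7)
My plan is to compute the density at a generic $(G,\myvec u)$ by multiplying the four independent contributions of Steps~1--4, and to argue that non-generic points carry density zero. For a non-generic $(G,\myvec u)$, either $G$ has fewer than $2k$ distinct essential vertices (a zero-probability event, since in Step~4 distinguished vertices are roots of independent GW branches, so two such branches almost surely yield distinct subtrees and hence distinct essential ancestors), or some $u_i = 0$ (a zero-Lebesgue-measure event under the Step~2 density). Hence the density vanishes on the non-generic set.

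For a generic $(G,\myvec u)$, the underlying proper $k$-tree is uniquely recovered from $G$ by contracting each length-$(2t+1)$ path between successive essential vertices to a single edge. Step~1 therefore produces this specific skeleton with probability $1/\bigl(2^{k-1}\prod_{i=1}^{k-1}(2i-1)\bigr)$. Step~2 contributes a density $h(\myvec u)$ at the labels, since $3\cdot 5\cdots(2k-3) = \prod_{i=1}^{k-1}(2i-1)$ matches the constant in \eqref{eq:def_h}.

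For Steps~3 and 4, I split $V_G$ into three categories: the $2k$ essential vertices (root $r$, branching vertices $b$, distinguished vertices $v_j$), the $2t(2k-1)$ intermediate path vertices, and the remaining non-skeleton vertices lying in branches grafted on the skeleton. For the root, the pre-existing outdegree is $1$ and the pre-existing child occupies a $G$-specified position among the $d = d_G^+(r)$ children, so by \eqref{eq:xihat} the combined probability for final outdegree $d$ with the correct ordering is $\Proba(\hat\xi = d)\cdot \tfrac{1}{d} = \Proba(\xi = d)$. The same identity applies to each intermediate vertex, yielding $\Proba(\xi = d_G^+(m))$. For each branching vertex $b$, the pre-existing outdegree is $2$ and the two pre-existing children occupy $G$-specified positions, giving by \eqref{eq:xistar}
\[
\Proba(\xi^* = d_G^+(b))\bigg/\binom{d_G^+(b)}{2} = \frac{2}{\sigma^{2}}\,\Proba(\xi = d_G^+(b)).
\]
In Step~4, each distinguished $v_j$ is the root of an independent copy of $\bm T$ conditioned on root-degree in $\Omega$, contributing $\Proba(\xi = d_G^+(v_j))/\Proba(\xi\in\Omega)$ for its outdegree. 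All remaining non-skeleton vertices arise either as descendants of a distinguished vertex (inside such a conditioned GW tree) or within an independent unconditioned copy of $\bm T$ grafted on an additional offspring from Step~3; in either case each non-skeleton vertex $w$ contributes the plain GW factor $\Proba(\xi = d_G^+(w))$.

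Collecting everything, the outdegree probabilities assemble into $\prod_{v\in V_G}\Proba(\xi = d_G^+(v))$; the $k-1$ branching vertices contribute the extra factor $(2/\sigma^{2})^{k-1}$, and the $k$ distinguished vertices contribute $\Proba(\xi\in\Omega)^{-k}$. Combined with the Step~1 and 2 factors,
\[
\frac{h(\myvec u)}{2^{k-1}\prod_{i=1}^{k-1}(2i-1)} \cdot \frac{2^{k-1}}{\sigma^{2k-2}\,\Proba(\xi\in\Omega)^{k}} \cdot \prod_{v\in V_G}\Proba(\xi = d_G^+(v)) \;=\; p_G\,h(\myvec u),
\]
matching the definitions \eqref{eq:def_pG}--\eqref{eq:def_h}. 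The only delicate point is the bookkeeping of the uniform-ordering factors $1/d$ and $1/\binom{d}{2}$ appearing in Step~3, which conspire with the size-bias in the laws of $\hat\xi$ and $\xi^{*}$ to produce the clean outdegree probability $\Proba(\xi = d)$, up to the $(2/\sigma^{2})^{k-1}$ prefactor collected across the branching vertices; all remaining steps are pleasant bookkeeping.
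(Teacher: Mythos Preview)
Your proof is correct and follows essentially the same approach as the paper: both decompose the construction step by step, use the identities $\Proba(\hat\xi=d)/d=\Proba(\xi=d)$ and $\Proba(\xi^*=d)/\binom{d}{2}=(2/\sigma^2)\Proba(\xi=d)$ to reduce every vertex contribution to a plain $\Proba(\xi=d)$, collect the extra factors $(2/\sigma^2)^{k-1}$ and $\Proba(\xi\in\Omega)^{-k}$, and cancel the $2^{k-1}$ against the count of proper $k$-trees. One small remark: your justification that non-generic $G$ has probability zero via an almost-sure argument about independent GW branches is unnecessary---genericity is a \emph{deterministic} feature of the construction, since after Step~2 the $2k$ vertices of the original proper $k$-tree become the essential vertices at pairwise distance exactly $2t+1$, and Steps~3--4 only graft additional structure without altering these distances or merging any of these vertices.
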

\begin{proof}
	By construction, it is clear that $\Sh({\bm T}^{k,t}_\Omega)$ and $\Lab({\bm T}^{k,t}_\Omega)$
	are independent, that $\Sh({\bm T}^{k,t}_\Omega)$ is generic
	and that $\Lab({\bm T}^{k,t}_\Omega)$ has density $h$.
	We only need to prove that, for any generic $G$ in $\setTkt$,
	$\Proba\big[ \Sh(\bm T^{k,t}_\Omega)=G\big] = p_G$.
	
	We follow the construction of $\bm T^{k,t}_\Omega$.
	The event $\Sh(\bm{T}^{k,t}_\Omega) = G$ holds
	if and only if the following events occur.
	\begin{itemize}
		\item At step 1, we choose the proper $k$-tree corresponding to the genealogy of the distinguished vertices of $G$:
		this happens with probability 
		\[\frac{1}{2^{k-1} \prod_{i=1}^{k-1}(2i-1)}.\]
		\item If we choose the correct proper $k$-tree,
		after step 2, the vertices of the resulting tree correspond to the
		vertices of $R(G ,\myvec{u})$.
		Then, at step 3, we need to choose for each of them the correct number of children
		and the correct ordering of these children.
		For a branching vertex of outdegree $d$ in $R(G ,\myvec{u})$,
		the correct number of children is chosen with probability $\Proba(\xi^* = d)$
		and the correct ordering with probability $\binom{d}{2}^{-1}$.
		Multiplying these probabilities gives 
		\[\frac{\Proba(\xi^* = d)}{\binom{d}{2}} = \frac{2}{\sigma^2} \Proba(\xi= d).\]
		For a non-branching internal vertex of outdegree $d$ in $R(G ,\myvec{u})$,    
		the correct number of children is chosen with probability $\Proba(\hat{\xi} = d)$
		and the correct ordering with probability $d^{-1}$.
		Again, multiplying these two, we get
		\[ \frac{\Proba(\hat{\xi} = d)}{d} = \Proba(\xi= d).\]
		\item In step 4 of the construction, we need to choose copies of $\bm T$ or $\bm T$ conditioned 
		to have root outdegree in $\Omega$ (the black and green triangles in \cref{fig:ktree}) corresponding to that in $G$.
		The probability of this event is given as a product as follows.
		For each distinguished vertex $v$ of outdegree $d$, we have a factor $\Proba(\xi=d)/\Proba(\xi \in \Omega)$ (the denominator comes from the conditioning
		that the outdegree of such vertex is in $\Omega$).
		For vertices in $G \setminus R(G ,\myvec{u})$ of outdegree $d$, we have a factor $\Proba(\xi=d)$.
	\end{itemize}
	Summing up, since there are $k-1$ branching vertices and $k$ distinguished ones, we get that
	\begin{equation}
	\Proba\big[ \Sh(\bm T^{k,t}_\Omega)=G\big] =  \frac{1}{2^{k-1} \prod_{i=1}^{k-1}(2i-1)} \frac{2^{k-1}}{\sigma^{2k-2}} \Proba(\xi \in \Omega)^{-k}
	\prod_{v \in V_G} \Proba(\xi= d^+_G(v)),
	\label{eq:Proba_LimitG}
	\end{equation}
	and the factors $2^{k-1}$ in the numerator and denominator cancel out.
\end{proof}

\subsection{Convergence}

\label{sec:semiconv}

The following lemma extends Aldous' skeleton decomposition~\cite[Eq. (49)]{MR1207226} by keeping track of $o(\sqrt{n})$-neighbourhoods 
near the essential vertices of the skeleton. 
The $o(\sqrt{n})$-threshold is sharp 
(for the applications in this paper,
the convergence of $t_n$-neighbourhoods for any sequence $t_n$ 
tending to infinity would suffice).
We note that $o(\sqrt{n})$-neighbourhoods of the root
have been previously considered in the literature,
{\em e.g.}\ by Aldous \cite{MR1085326,MR1166406}
and Kersting \cite{Kersting};
see also \cite[Theorem 5.2]{stufler2019}
for a result on the $o(\sqrt{n})$-neighbourhood
of a uniform random vertex in the tree.
Besides,
Lemma~\ref{le:semilocal} is also related to scaling limits obtained by Kortchemski~\cite{MR2946438}
and Rizzolo~\cite{MR3335013}, that imply  convergence of $R(\bm T_n^\Omega, \myvec{\rv{v}})$.

We recall that we see trees of the from $s.R^{[t]}(T,\myvec{u})$ and ${\bm T}^{k, t}_\Omega$
as elements of the space $\setTkt \times (\RR_+)^{2k-1}$ as explained in \cref{ssec:skeleton_space}.
\begin{lemma}
	\label{le:semilocal}
	Suppose that the offspring distribution $\xi$ is critical, aperiodic, and has finite variance $\sigma^2$. Let $\myvec{\rv{v}}$ be a vector of $k \ge 1$ independently and uniformly selected vertices with outdegree in $\Omega$ of the conditioned tree $\bm T_n^\Omega$. Then for each constant positive integer $t$ it holds that
	\begin{align}
	\label{eq:weaker}
	c_\Omega \sigma n^{-1/2}. R^{[t]}(\bm T_n^\Omega, \myvec{\rv{v}})	\convdis {\bm T}^{k, t}_\Omega
	\end{align}
	with $c_\Omega= \sqrt{\Proba(\xi \in \Omega)}$. Even stronger, for each sequence $t_n = o(\sqrt{n})$ of positive integers it holds that
	\begin{align}
	\label{eq:toshow}
	\sup_{A,B} \left| \Proba\big[ c_\Omega \sigma n^{-1/2}.R^{[t_n]}(\bm T_n^\Omega, \myvec{\rv{v}})
	\in A \times B\big] - \Proba\big[{\bm T}^{k, t_n}_\Omega \in A \times B\big] \right| \to 0,
	\end{align}
	with $A$ ranging over all subsets of $\mathcal T_{k,\Omega}^{[t_n]}$,
	and $B$ over open intervals of $(\RR_+)^{2k-1}$.
\end{lemma}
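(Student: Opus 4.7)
The plan is to establish the total-variation bound \eqref{eq:toshow} by combining a pointwise density calculation with a Scheff\'e-type argument; the weaker statement \eqref{eq:weaker} then follows by specialising $t_n = t$.

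\emph{Step 1: Exact enumeration.} For a generic shape $G \in \setTktn$ and integer labels $\myvec{\ell^*} \in \NN_0^{2k-1}$, I would compute $\Proba[R^{[t_n]}(\bm{T}_n^\Omega, \myvec{\rv{v}}) = (G, \myvec{\ell^*})]$ explicitly. Conditionally on $\bm{T}_n^\Omega$, the vector $\myvec{\rv{v}}$ hits the $k$ prescribed distinguished $\Omega$-vertices of $G$ with probability $n^{-k}$. Using $\Proba(\bm{T}_n^\Omega = T) = \prod_v \Proba(\xi = d_T^+(v))/\Proba(|\bm T|_\Omega = n)$, I would decompose any tree $T$ matching our event as the shape $G$ completed with (i) independent Galton--Watson branches grafted at each bush-leaf of $G$, and (ii) at each of the $\ell_i^*$ vertices of the $i$-th middle path, an outdegree $d \ge 1$ chosen with probability $\Proba(\xi = d)$, a choice of position for the spine child among the $d$ children, and independent Galton--Watson branches at the remaining children. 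Regrouping the contributions of non-branching (resp.\ branching) essential vertices using the size-biased laws $\hat\xi$ (resp.\ $\xi^*$) from \eqref{eq:xihat}--\eqref{eq:xistar} yields a factored expression of the form
\[
\Proba\bigl[R^{[t_n]}(\bm{T}_n^\Omega, \myvec{\rv{v}}) = (G, \myvec{\ell^*})\bigr] = \frac{\mathcal{A}_G \cdot \sigma^{2(k-1)} \cdot \Proba[S_N = n - |G|_\Omega]}{n^k\,\Proba(|\bm T|_\Omega = n)},
\]
where $\mathcal{A}_G = \prod_{v \in \Vint(G)} \Proba(\xi = d_G^+(v))$, the $\sigma^{2(k-1)}$ absorbs the contribution of the $k-1$ branching vertices, $N = N(G, \myvec{\ell^*}) = \Theta(n^{1/2})$ is the total number of grafted i.i.d.\ branches, and $S_N$ is their total $\Omega$-count.

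\emph{Step 2: Heavy-tailed local CLT.} Next, I would invoke the local limit theorems of Kortchemski~\cite{MR2946438} and Rizzolo~\cite{MR3335013} to obtain
\[
\Proba(|\bm T|_\Omega = n) \sim \frac{c_\Omega}{\sigma \sqrt{2\pi\, n^3}}, \qquad \Proba(S_N = m) \sim \frac{c_\Omega^2\, N}{\sigma \sqrt{2\pi\, m^3}} \exp\!\Bigl(-\frac{c_\Omega^2\, N^2}{2\sigma^2 m}\Bigr),
\]
uniformly as $m \to \infty$ with $N/\sqrt m$ bounded. Substituting $m = n(1+o(1))$ and $N = 2(\sum_i u_i)\, n^{1/2}/(c_\Omega \sigma)\cdot(1+o(1))$, where $u_i = c_\Omega \sigma n^{-1/2}\ell_i^*$, the exponential collapses exactly to $\exp(-(\sum_i u_i)^2/2)$, and after cancellations of the $c_\Omega$, $\sigma$ and combinatorial prefactors one obtains
\[
\Proba\bigl[R^{[t_n]}(\bm{T}_n^\Omega, \myvec{\rv{v}}) = (G, \myvec{\ell^*})\bigr] = (c_\Omega \sigma n^{-1/2})^{2k-1}\, p_G\, h(\myvec{u})\, (1+o(1)),
\]
matching precisely the density $f = p_G \cdot h$ of $\bm{T}_\Omega^{k,t_n}$ identified in \cref{lem:distrib_Tkt}.

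\emph{Step 3: Assembly.} Summing over integer labels $\myvec{\ell^*}$ whose rescaling lies in $B$ converts these discrete probabilities into Riemann sums converging to $\int_B h(\myvec{u})\,d\myvec{u}$, while summing over generic $G \in A$ recovers $\sum_{G \in A} p_G$. A union bound shows that non-generic outcomes (essential vertices coinciding or successive ones at scaled distance zero) have limiting probability zero. Total variation convergence as in \eqref{eq:toshow} then follows from Scheff\'e's lemma applied to the counting-times-Lebesgue reference measure on $\setTktn \times (\RR_+)^{2k-1}$, and the convergence in distribution \eqref{eq:weaker} is an immediate consequence.

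The main obstacle is the uniform heavy-tailed local CLT of Step 2 for the $\Omega$-count $S_N$ in the joint regime $N = \Theta(\sqrt n)$, $m \sim n$; this extends the classical Kemperman/Otter asymptotics and must be verified uniformly in the relevant parameter range. Extending from fixed $t$ to $t_n = o(\sqrt n)$ relies on the fact that the $t_n$-neighbourhoods add only $O(t_n) = o(\sqrt n)$ vertices to the skeleton, so they do not perturb the $\sqrt n$-scale of the middle-path lengths; uniformity in $G$ ultimately requires moment estimates on local neighbourhoods of $\bm T$, for which the finite variance of $\xi$ is essential.
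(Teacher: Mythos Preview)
Your approach is essentially the one the paper takes: an exact combinatorial decomposition, a stable-law local limit theorem for the branch count, the Kortchemski asymptotic for $\Proba(|\bm T|_\Omega = n)$, and then assembly. However, there is a genuine gap in Step~1 that propagates through Step~2.

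You write $\Proba[S_N = n - |G|_\Omega]$ with $N = N(G,\myvec{\ell^*})$ a deterministic function of the shape and label data. But the number of grafted branches is \emph{not} determined by $(G,\myvec{\ell^*})$: at each of the $\ell = \sum_i \ell_i^*$ middle-path vertices you must choose an outdegree, and only after this choice do you know how many branches are grafted. Once you absorb the position factor $d$ into the outdegree weight, the effective outdegree at each such vertex is distributed as $\hat\xi$, and the branch count is the random variable $\bm L = \sum_{i=1}^{\ell}(\hat\xi_i - 1)$ (this is exactly the quantity $\bm L$ in the paper). You therefore need a separate concentration step $\bm L/\ell \to \E[\hat\xi] - 1 = \sigma^2$ before invoking the stable LLT. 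This also explains your factor-of-two slip: the correct asymptotic is $\bm L \approx \sigma^2 \ell = \sigma(\sum_i u_i)\sqrt n / c_\Omega$, not $2(\sum_i u_i)\sqrt n/(c_\Omega\sigma)$; with the right constant the exponential does collapse to $\exp(-(\sum_i u_i)^2/2)$. A second omission: the middle-path vertices themselves may have outdegree in $\Omega$, so the target for $S_{\bm L}$ is $n - |G|_\Omega - \bm Q$ with $\bm Q = \sum_{i=1}^\ell \One[\hat\xi_i \in \Omega]$; this is $O(\ell) = O(\sqrt n) = o(n)$ so it washes out, but it must be tracked.

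For the extension to $t_n = o(\sqrt n)$, the uniformity in $G$ is more delicate than you indicate. The paper does not use Scheff\'e directly but instead restricts to shapes with $|G|_\Omega \le s_n$ for a well-chosen $s_n = o(n)$ with $t_n^2 = o(s_n)$, proves the pointwise estimate uniformly on this set, and then separately checks that both $c_\Omega\sigma n^{-1/2}.R^{[t_n]}(\bm T_n^\Omega,\myvec{\rv v})$ and $\bm T_\Omega^{k,t_n}$ stay in this set with high probability. The latter tightness (showing $\Proba[|\Sh(\bm T_\Omega^{k,t_n})|_\Omega \ge s_n] \to 0$) requires a genuine tail bound on Galton--Watson forest sizes (the paper uses $\Proba(\bm S_m \ge x) \le Cm x^{-1/2}$); your final paragraph gestures at this but does not supply the mechanism.
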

\begin{proof}
	We fix some sequence $t_n$ and let, for each $n$,
	$(G,\myvec{x})$ be an element in $\setTktn \times (\RR_+)^{2k-1}$,
	with $G$ generic and $\myvec{x}$ taking integer coordinates.
	
	We also fix constants $b>a>0$ and a sequence $s_n$ with $s_n=o(n)$. 
	The core of the proof consists in establishing
	that, as $n \to \infty$, we have
	\begin{align}
	\label{eq:almostthere}
	\Proba\big[ 1.R^{[t_n]}(\bm T_n^\Omega, \myvec{\rv{v}}) = (G, \myvec{x}) \big] 
	\sim \left( \frac{\sigma  c_\Omega}{\sqrt{n}}\right)^{2k-1}  
	h\left(\sigma c_\Omega n^{-1/2} \myvec{x}\right) p_G,
	\end{align}
	uniformly on pairs $(G, \myvec{x})$  
	such that $\sum_{i=1}^{2k-1} x_i$ is in $[a\sqrt{n},b\sqrt{n}]$ and $|G|_{\Omega} \le s_n$
	(recall that $p_G$ is defined in \eqref{eq:def_pG} and $h(\cdot)$ in \eqref{eq:def_h}).
	
	Assume temporarily \eqref{eq:almostthere}.
	Summing over all possible values of $\myvec{x}$ (such that $\sum_{i=1}^{2k-1} x_i$ lies in $[a\sqrt{n},b\sqrt{n}]$,
	and making $a$ go to $0$, $b$ go to $+\infty$), we have
	\begin{equation}
	\Proba\big[\Sh(1.R^{[t_n]}(\bm T_n^\Omega, \myvec{\rv{v}})) =G \big]=
	\Proba\big[\Sh(c_\Omega \sigma n^{-1/2} .R^{[t_n]}(\bm T_n^\Omega, \myvec{\rv{v}}))=G \big]
	\sim p_G,
	\label{eq:Sh_pG}
	\end{equation}
	uniformly on trees $G$ with $|G|_{\Omega} \le s_n$.
	Moreover, conditionally on the shape of this skeleton being $G$,
	\cref{eq:almostthere} gives a local limit theorem for $\Lab(1.R^{[t_n]}(\bm T_n^\Omega, \myvec{\rv{v}}))$
	with scaling factor $c_\Omega \sigma n^{-1/2}$ and limiting distribution with density $h$.
	This implies convergence in distribution of 
	$\Lab(c_\Omega \sigma n^{-1/2} .R^{[t_n]}(\bm T_n^\Omega, \myvec{\rv{v}}))$
	to a random variable of density $h$.
	Comparing with \cref{lem:distrib_Tkt}, we see that \cref{eq:almostthere} implies 
	\cref{eq:weaker}.
	Proving \cref{eq:toshow} needs an extra ingredient and we come back to it at the end of the proof.
	\medskip
	
	To prove \cref{eq:almostthere}, we need some additional notation.
	First, we write $\ell=\sum_{i=1}^{2k-1} x_i$.
	Additionally, we let $(\rv{X}_i, \hat{\xi}_i)_{i \ge 1}$ be independent copies of $|\bm T|_\Omega$ and $\hat{\xi}$.
	Finally, we also set 
	\begin{equation}
	\rv{S}_m \coloneqq \rv{X}_1 + \ldots + \rv{X}_m \text{ (for $m \ge 0$)}, \quad
	\rv{Q} \coloneqq \sum_{i=1}^{\ell} \mathbb{1}_{\hat{\xi}_i \in \Omega}
	\  \text{ and }\ \rv{L} := \sum_{i=1}^{\ell}(\hat{\xi}_i -1).
	\label{eq:notation}
	\end{equation}
	
	The proof of \cref{eq:almostthere} is splitted in two parts,
	respectively of combinatorial and analytic nature.
	The combinatorial part shows that
	\begin{multline}
	\Proba\big[ 1.R^{[t_n]}(\bm T_n^\Omega, \myvec{\rv{v}}) =  (G, \myvec{x}) \big] 
	= n^{-k} \left( \prod_{v \in V_G} \Proba(\xi= d^+_G(v)) \right) 
	\frac{\Proba\left(\rv{S}_{ \rv{L}} = n - |G|_\Omega - \rv{Q} \right)}{\Proba(|\bm T|_\Omega =n)}
	\label{eq:distrib_Rtn}\\
	=    n^{-k} c_\Omega^{2k} \sigma^{2k-2} \left( \prod_{i=1}^{k-1}(2i-1)\right) p_G  \frac{\Proba\left(\rv{S}_{ \rv{L}} = n - |G|_\Omega - \rv{Q} \right)}{\Proba(|\bm T|_\Omega =n)}.
	\end{multline}
	We do this by decomposing combinatorially pairs $(T_\star, \myvec v_\star)$ (i.e. trees with distinguished vertices)
	such that $1.R^{[t_n]}(T_\star, \myvec v_\star)=(G, \myvec{x})$.
	
	The analytic part, based on a standard local limit lemma,
	then analyzes the numerator of the last factor and shows that
	\begin{align}
	\label{eq:corpsereviver2}
	\Proba\left(\rv{S}_{ \rv{L}} = n - |G|_\Omega - \rv{Q}\right ) \sim (2 \pi)^{-1/2} \sigma  c_\Omega \ell n^{-3/2} \exp \left( - \frac{\sigma^2 \Proba(\xi \in \Omega) \ell^2}{ 2n}\right)
	\end{align}
	uniformly on integers $\ell$ in $[a\sqrt{n},b\sqrt{n}]$, and on trees $G$ with $|G|_\Omega \le s_n$.
	
	Finally, an estimate for the denominator in \eqref{eq:distrib_Rtn}  
	is given e.g. in \cite[Thm. 8.1]{MR2946438}:
	\begin{align}
	\label{eq:partitionfunction}
	\Proba(|\bm T|_\Omega = n) \sim \tfrac{c_\Omega}{\sqrt{2 \pi} \sigma} n^{-3/2}.
	\end{align}
	We leave the reader check that, after many obvious cancellations,
	plugging in the estimates \eqref{eq:corpsereviver2} and \eqref{eq:partitionfunction}
	into \eqref{eq:distrib_Rtn} gives indeed \eqref{eq:almostthere}.
	\medskip
	
	{\em The combinatorial part: proof of \eqref{eq:distrib_Rtn}.}
	We first consider the unconditioned Galton--Watson tree $\bm T$, and conditionally on $\bm T$,
	a uniform list $\myvec{\rv{v}}$ of $k$ vertices with outdegree in $\Omega$ in $\bm T$ (possibly with repetitions).
	A pair $(T_\star, \myvec v_\star)$, where $\myvec v_\star$ is a list of $k$ vertices with outdegree in $\Omega$
	in the tree $T_\star$, is called {\em good} if 
	$|T_\star|_\Omega=n$ and $1.R^{[t_n]}(T_\star,\myvec v_\star)=(G, \myvec{x})$.
	Then we have
	\begin{align}
	\Proba \big[ 1.R^{[t_n]}(\bm T, \myvec{\rv{v}}) =  (G, \myvec{x}), |\bm T|_\Omega=n \big]
	&=\sum_{(T_\star,\myvec v_\star) \text{ good}} \Proba(\bm T=T_\star) \Proba\big( \myvec{\rv{v}}=\myvec v_\star|\bm T=T_\star\big) \nonumber\\
	&=n^{-k} \sum_{(T_\star,\myvec v_\star) \text{ good}} \Proba(\bm T=T_\star).
	\label{eq:Tech}
	\end{align}
	Good pairs $(T_\star,\myvec v_\star)$ can be constructed as follows.
	\begin{enumerate}
		\item We start from $(G, \myvec{x})$ and replace each edge with a length label $x_i$
		by a path with $x_i$ internal vertices;
		in total, this operation creates $\ell$ new vertices, which we will refer to as the {\em remote} vertices.
		\item We choose the outdegrees $(d_i)_{i \le \ell}$ in $T_\star$ of the $\ell$ remote vertices of $(T, \myvec{u})$;
		\item For each remote vertex, choose the distinguished offspring along which we have to proceed 
		to get to the first descendant that is an essential vertex ($d_i$ possible choices).
		\item On each of the $m:=\sum d_i-1$ other children of the remote vertices,
		we attach a fringe subtree tree $(A_j)_{j \le m}$.
		\item To ensure that $|T_\star|_\Omega=n$,
		the degrees $(d_i)_{i \le \ell}$ and the subtrees $(A_j)_{j \le m}$ should be chosen such that  
		$|G|_\Omega+\sum_{i=1}^\ell \mathbb{1}_{d_i \in \Omega}+\sum_{j=1}^m |A_j|_{\Omega}=n$.
	\end{enumerate}
	Moreover, if $(T_\star,\myvec v_\star)$ corresponds in this construction
	to sequences $(d_i)_{i \le \ell}$ and $(A_j)_{j \le m}$, then we have 
	($V_G$ denoting the set of vertices of the tree $G$)
	\[\Proba(\bm T=T_\star) = \left( \prod_{v \in V_G} \Proba\big[\xi= d^+_G(v)\big] \right)
	\, \left( \prod_{i=1}^\ell \Proba\big[ \xi=d_i \big] \right)
	\, \left( \prod_{j=1}^m  \Proba\big[\bm T= A_j \big] \right).\]
	(This probability is independent from the choices made in step iii) ).
	The sum over good pairs $(T_\star,\myvec v_\star)$ in \cref{eq:Tech} can be rewritten as a sum over sequences
	of positive integers $(d_i)_{i \le \ell}$ and sequences of trees $(A_j)_{j \le m}$,
	with an extra factor $\prod_i d_i$ coming from the choices in item iii) above.
	We get
	\begin{multline*}
	\Proba\big[ 1.R^{[t_n]}(\bm T, \myvec{\rv{v}}) =  (G, \myvec{x}), |\bm T|_\Omega=n \big]\\
	= n^{-k} \left( \prod_{v \in V_G} \Proba\big[\xi= d^+_G(v)\big] \right) 
	\sum_{d_1,\dots,d_\ell \ge 1} \left[  \prod_{i=1}^\ell d_i \Proba\big[ \xi=d_i \big]  \right.\\
	\qquad \cdot \left. \sum_{A_1,\dots,A_m \text{ trees}} \One\!\!
	\left[\sum_{j=1}^m |A_j|_{\Omega}=n-|G|_\Omega-\sum_{i=1}^\ell \mathbb{1}_{d_i \in \Omega}\right] 
	\, \prod_{j=1}^m  \Proba\big[\bm T= A_j \big] \right].
	\end{multline*}
	The sum in the last line is the probability that the total number of vertices with outdegree in $\Omega$
	in $m$ independent copies of $\bm T$ is $n-|G|_\Omega-\sum_{i=1}^\ell \mathbb{1}_{d_i \in \Omega}$,
	\emph{i.e.,} with the notation \cref{eq:notation}, this is $\Proba(\bm S_m=n-|G|_\Omega-\sum_{i=1}^\ell \mathbb{1}_{d_i \in \Omega})$.
	Recalling that by definition, $m=\sum_i d_i-1$, we get
	\begin{multline*}
	\Proba\big[ 1.R^{[t_n]}(\bm T, \myvec{\rv{v}}) =  (G, \myvec{x}), |\bm T|_\Omega=n \big]\
	= n^{-k} \left( \prod_{v \in V_G} \Proba\big[\xi= d^+_G(v)\big] \right) \cdot\\
	\left[ \sum_{d_1,\dots,d_\ell \ge 1} \left(\prod_{i=1}^\ell d_i \Proba\big[ \xi=d_i \big]\right)
	\, \Proba\left( \rv{S}_{\sum_{i=1}^{\ell} (d_i - 1)} = n - |G|_\Omega  -  \sum_{i=1}^\ell \mathbb{1}_{d_i \in \Omega} \right)\right]. \\
	\end{multline*}
	With the notation \cref{eq:notation}, the right-hand side can be simplified as
	\[\Proba\big[ 1.R^{[t_n]}(\bm T, \myvec{\rv{v}}) =  (G, \myvec{x}), |\bm T|_\Omega=n \big]\,
	= n^{-k}
	\left( \prod_{v \in V_G} \Proba\big[\xi= d^+_G(v)\big] \right) \Proba\left(\rv{S}_{ \rv{L}} = n - |G|_\Omega - \rv{Q} \right).\]
	Dividing by $\Proba(|\bm T|_\Omega =n)$ gives \cref{eq:distrib_Rtn}, as wanted.
	\medskip
	
	{\em The analytic part: proof of \eqref{eq:corpsereviver2}}.
	We are now looking for an asymptotic estimates for the probability
	$\Proba\left(\rv{S}_{ \rv{L}} = n - |G|_\Omega - \rv{Q} \right)$.
	Since $\bm S_m$ is a sum of $m$ i.i.d.\ random variables with the same law as $|\bm T|_\Omega$,
	this asymptotics depends on the tail of the distribution of $|\bm T|_\Omega$.
	We recall from \eqref{eq:partitionfunction} that
	\[
	\Proba(|\bm T|_\Omega = n) \sim (2 \pi \sigma^2_\Omega)^{-1/2} n^{-3/2},
	\]
	with 
	$\sigma_\Omega^2 =  \sigma^2/ \Proba(\xi \in \Omega)=\sigma^2/c_\Omega^2$.
	This implies that $|\bm T|_\Omega$ lies in the domain of attraction of the positive $(1/2)$-stable law. 
	Hence by~\cite[Sec. 50]{MR0062975},
	\begin{align}
	\label{eq:LLT_S}
	\lim_{m \to \infty} \sup_{r \ge 0} \left|m^2 \Proba\left( \rv{S}_m = r\right) - \sigma^2_\Omega g(\sigma^2_\Omega r / m^2) \right| = 0,
	\end{align}
	with $g$ denoting the positive $(1/2)$-stable density given by
	\begin{align}
	g(x) = (2 \pi)^{-1/2} x^{-3/2} \exp(-1/(2x)).
	\end{align}
	In particular, since this density is bounded, we have that, for $m$ large enough,
	\begin{equation}
	\sup_{r \ge 0} \left|\Proba\left( \rv{S}_m = r\right) \right| =O(m^{-2}).
	\label{eq:Bound_local_S}
	\end{equation}
	The law of large numbers tells us that 
	\[
	\ell^{-1} \rv{L} \convp \E[\hat{\xi}] - 1 = \E[\xi^2] -1 = \sigma^2.
	\]
	Moreover, from standard deviation estimates,
	there is a sequence $\epsilon_n \to 0$ with 
	\[
	\Proba\left(\rv{L} \notin (1 \pm \epsilon_n)\sigma^2 \ell \right) \to 0,
	\]
	and this sequence can be chosen uniformly for all $\ell$ in $[a\sqrt{n},b\sqrt{n}]$.
	It follows by conditioning on $\rv{L}$ and using~\eqref{eq:Bound_local_S} that
	\begin{multline*}
	\Proba\Big[\rv{S}_{ \rv{L}} = n - |G|_\Omega - \rv{Q}, \rv{L} \in (\sigma^2 \ell/2, (1 - \epsilon_n)\sigma^2 \ell)
	\text{ or } \rv{L} >(1 + \epsilon_n)\sigma^2 \ell \Big]  \\
	= O(\ell^{-2}) \Proba\left(\rv{L} \notin (1 \pm \epsilon_n)\sigma^2 \ell \right) = o(\ell^{-2}) = o(n^{-1}).
	\end{multline*}
	Moreover, the Azuma--Hoeffding inequality implies that for large enough $M>0$
	\[
	\Proba(\rv{L} < \sigma^2\ell/2) \le \Proba\left( \sum_{i=1}^{\ell}(\hat{\xi}_i \mathbb{1}_{\hat{\xi} \le M} -1)  \le \sigma^2 \ell/2 \right) \le \exp(-\Theta(\ell)) = \exp(-\Theta(\sqrt{n})).
	\]
	Thus we obtain
	\[
	\Proba(\rv{S}_\rv{L} = n- |G|_\Omega - \rv{Q}) 
	= o(n^{-1}) + \Proba(\rv{S}_\rv{L} = n - |G|_\Omega - \rv{Q}, \rv{L} \in (1 \pm \epsilon_n)\sigma^2 \ell).\]
	By definition, we have $\rv{Q} \le \ell$ a.s.,
	so that $n- |G|_\Omega - \rv{Q}=n-o(1)$, unformly
	when $|G|_\Omega \le s_n$ and $\ell$ in $[a\sqrt{n},b\sqrt{n}]$.
	Using \eqref{eq:LLT_S}, we have
	\[\Proba(\rv{S}_\rv{L} = n- |G|_\Omega - \rv{Q})
	\sim  (2 \pi)^{-1/2}\sigma_{\Omega}^{-1} \sigma^2 \ell n^{-3/2}  \exp(- \ell^2 \sigma^4 / (2n\sigma_\Omega^2));
	\]
	indeed, the right-hand side being of order $\Theta(n^{-1})$ (for $\ell$ in $[a \sqrt{n},b\sqrt{n}]$),
	the error term $o(n^{-1})$ above can be forgotten.
	This verifies \cref{eq:corpsereviver2}.
	\medskip
	
	{\em Proof of the statement with $t_n=o(\sqrt{n})$ (\cref{eq:toshow}).}
	Since the equivalent in \cref{eq:almostthere} is uniform on trees $G$ in $\setTkt$ with $|G|_{|\Omega|}\le s_n$,
	\cref{eq:almostthere} implies a weaker version of \cref{eq:toshow}, 
	where we let $A$ range only over subsets of $\setTkt$ containing only trees
	with $\Omega$-size at most $s_n$.  
	It remains to verify that there exists a sequence $s_n=o(n)$ that additionally satisfies
	\begin{align}
	\label{eq:lastpart}
	\Proba\big[|\Sh(\bm T^{k,t_n}_\Omega)|_\Omega \ge s_n\big] \to 0.
	\end{align}
	Since by \eqref{eq:Sh_pG}, we have
	\[\Proba\big[|\Sh(\bm T^{k,t_n}_\Omega)|_\Omega \le s_n \big] -
	\Proba\big[|\Sh(c_\Omega n^{-1/2}.R^{[t_n]}(\bm T_n^\Omega, \myvec{\rv{v}}) )|_\Omega \le s_n \big] \to 0,\]
	\eqref{eq:lastpart} would imply
	\[
	\Proba(|\Sh(\sigma  c_\Omega n^{-1/2}.R^{[t_n]}(\bm T_n^\Omega, \myvec{\rv{v}}))|_\Omega \ge s_n) \to 0
	\]
	and hence complete the proof of \cref{eq:toshow}. 
	
	Let us check~\eqref{eq:lastpart}. By construction of $\bm T^{k,t_n}_\Omega$ (see \cref{fig:ktree}),
	we have: 
	\begin{equation}
	|\Sh(\bm T^{k,t_n}_\Omega)|_\Omega \leq 4 k t_n + \rv{S}_{\rv{M}} + \rv{S'}_k,
	\label{eq:Control_Size_TOmega}
	\end{equation}
	where the summands in the right-hand side are independent and distributed as follows:
	\begin{itemize}
		\item $\rv{M}$ satisfies
		\[\rv{M} \eqdist  \sum_{i=1}^{k-1} (\xi^*_i-2) + \sum_{i=1}^{1 + t_n(4k-2)} (\hat{\xi}_i -1),   
		\]
		and, as above, $\rv{S}_{\rv{M}}$ denotes the sum of $\rv{M}$ independent random variables $(X_i)_{i \le \bm M}$
		of law $|\bm T|_\Omega$, the $X_i$ being also independent of $\rv{M}$;
		\item $\rv{S'}_k$ is the sum of $k$ i.i.d.\ random variables of law $|\bm T|_\Omega$, conditioned on the root
		of $\bm T$ having outdegree in $\Omega$;
		\item $4 k t_n$ is an upper bound for the number of vertices on the stretched skeleton  of $\bm T^{k,t_n}_\Omega$
		having outdegree in $\Omega$.
	\end{itemize}
	
	It is known (see Rizzolo~\cite[Thm. 6]{MR3335013})
	that $|\bm T|_\Omega \in \{0, 1, \ldots\}$ may be stochastically bounded by the number of vertices of Galton--Watson tree with a different offspring distribution	that is also critical and has finite variance. It follows by a general result for the size of Galton--Watson forests, there is a constant $C>0$ such that 
	\begin{align}
	\label{eq:upperbound}
	\Proba(\rv{S}_m \ge x) \le C m x^{-1/2}.
	\end{align}
	for all $m$ and $x$. See Devroye and Janson \cite[Lem. 4.3]{MR2829308} and Janson \cite[Lem. 2.1]{MR2245498}. Let $\epsilon >0$ be given. 
	We choose a constant $K>0$ such that 
	\[\Proba\big[\rv{S}_{\sum_{i=1}^{k-1} (\xi^*_i-2)} + \rv{S'}_k > K \big] \le \epsilon/2.\] 
	Letting $\rv{M'} \eqdist \sum_{i=1}^{1 + t_n(4k-2)} (\hat{\xi}_i -1)$ be independent from the family $(\rv{X_i})_{i \ge 1}$, 
	it follows by  Inequalities~\eqref{eq:Control_Size_TOmega} and~\eqref{eq:upperbound}  that
	\begin{multline*}
	\Proba\big[|\Sh(\bm T^{k,t_n}_\Omega)|_\Omega \ge s_n \big]
	\le   \Proba\big[4 k t_n + \rv{S}_{\rv{M}} + \rv{S'}_k \ge s_n \big]
	\le \epsilon/2 + \Proba\big[ \rv{S}_\rv{M'} \ge s_n-K-4kt_n \big] \\
	\le \epsilon/2 + C (1 + t_n(4k -2) )(\E[\hat{\xi}] -1)(s_n- K-4kt_n)^{-1/2}.
	\end{multline*}
	Hence~\eqref{eq:lastpart} holds if we select $s_n = o(n)$ such that $t_n / \sqrt{s_n} \to 0$,
	which is clearly possible since $t_n = o(\sqrt{n})$.  This completes the proof.
\end{proof}

The above proof essentially also gives a local version of Lemma~\ref{le:semilocal}, 
which we believe to be of independent interest, and state below as \cref{le:semilocal2}. 
\begin{lemma}
	\label{le:semilocal2}
	Let the offspring distribution $\xi$ be critical, aperiodic, and have a finite variance. 
	Let $\myvec{\rv{v}}$ be a vector of $k \ge 1$ independently and
	uniformly selected vertices 
	with outdegree in $\Omega$  
	of the conditioned tree $\bm T_n^\Omega$. 
	Besides, we fix sequences $t_n$ and $s_n$ of non-negative integers satisfying
	$t_n^2=o(s_n)$ and $s_n=o(n)$.
	Then there exists sequence $a_n$ and $b_n$ tending to $0$ and $+\infty$ respectively
	such that:
	\begin{enumerate}
		\item the estimates
		\begin{align}
		\label{eq:local2}
		\Proba\big[ 1.R^{[t_n]}(\bm T_n^\Omega, \myvec{\rv{v}}) = (G, \myvec{x}) \big] 
		\sim \left( \frac{\sigma  c_\Omega}{\sqrt{n}}\right)^{2k-1}  
		h\left(\sigma c_\Omega n^{-1/2} \myvec{x}\right) p_G,
		\end{align}
		holds uniformly for all pairs $(G, \myvec{x})$ in $\setTktn \times (R_+)^{2k-1}$
		with $G$ generic verifying the conditions
		$|G|_\Omega \le s_n$ and $a_n \sqrt{n} \le \| \myvec{x} \|_1 \le b_n \sqrt{n}$;
		\item and, if we write 
		\[ (\bm G,\bm{\myvec x}):=1.R^{[t_n]}(\bm T_n^\Omega, \myvec{\rv{v}}),\]
		then the following properties hold with high probability as $n$ becomes large:
		$\bm G$ is generic, $|\bm G|_\Omega \le s_n$ and $a_n \sqrt{n} \le \| \bm{\myvec{x}} \|_1 \le b_n \sqrt{n}$.
	\end{enumerate}
\end{lemma}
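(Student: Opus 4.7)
The plan is to split the proof into two parts corresponding to the two items of the statement. Part~(i) is a uniform refinement of the local limit estimate \eqref{eq:almostthere} obtained in the proof of \cref{le:semilocal}, enlarging the admissible range of $\ell = \|\myvec{x}\|_1$ from $[a\sqrt{n}, b\sqrt{n}]$ with constants $a, b$ to $[a_n\sqrt{n}, b_n\sqrt{n}]$ with sequences $a_n \to 0$ and $b_n \to \infty$. Part~(ii) then shows that the random pair $(\bm G, \bm{\myvec{x}}) := 1.R^{[t_n]}(\bm T_n^\Omega, \myvec{\rv{v}})$ falls inside this regime with probability tending to one.

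For part~(i), the strategy is to revisit the proof of \cref{le:semilocal}. The combinatorial identity \eqref{eq:distrib_Rtn} is exact and the partition function estimate \eqref{eq:partitionfunction} is independent of $\ell$; only the derivation of \eqref{eq:corpsereviver2} uses the assumption $\ell \in [a\sqrt{n}, b\sqrt{n}]$. That derivation combines the uniform local limit theorem \eqref{eq:LLT_S} (already uniform in the target integer), an Azuma--Hoeffding bound yielding $\Proba(\bm L < \sigma^2\ell/2) \le \exp(-\Theta(\ell))$ (negligible as soon as $\ell \gg \log n$), and a concentration estimate for $\bm L$ around its mean. Making the last ingredient quantitative via Chebyshev gives $\Proba(\bm L \notin (1 \pm \epsilon_n)\sigma^2 \ell) = O(\epsilon_n^{-2}\ell^{-1})$. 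Selecting $\epsilon_n \to 0$, $a_n \to 0$, and $b_n \to \infty$ at sufficiently slow rates keeps every error term of smaller order than $\ell n^{-3/2}$, uniformly over the admissible range, which produces \eqref{eq:local2}.

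For part~(ii), three properties must hold with high probability: (A) $\bm G$ is generic, (B) $|\bm G|_\Omega \le s_n$, and (C) $a_n \sqrt{n} \le \|\bm{\myvec{x}}\|_1 \le b_n \sqrt{n}$. The genericness condition in (A) amounts to the original distances between successive essential vertices of $\bm T_n^\Omega$ being at least $2t_n + 1$; this holds with high probability because these distances are of order $\sqrt{n}$ by the scaling limit \eqref{eq:weaker}, while $t_n = o(\sqrt{n})$ by assumption. For (B), we apply \eqref{eq:toshow} with $A = \{G : |G|_\Omega > s_n\}$ and $B = (\RR_+)^{2k-1}$: the problem reduces to bounding $\Proba(|\Sh(\bm T^{k,t_n}_\Omega)|_\Omega > s_n)$, which tends to zero under the hypothesis $t_n^2 = o(s_n)$ by the argument based on \eqref{eq:Control_Size_TOmega} appearing at the end of the proof of \cref{le:semilocal}. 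For (C), \eqref{eq:toshow} implies that $c_\Omega \sigma n^{-1/2} \bm{\myvec{x}}$ converges in distribution to a random vector with density $h$; since $h$ is supported on $(\RR_+)^{2k-1}$, the $\ell^1$-norm of this limit is almost surely in $(0, \infty)$, so $\|\bm{\myvec{x}}\|_1$ lies in $[a_n \sqrt{n}, b_n \sqrt{n}]$ with probability tending to $1$ for any $a_n \to 0$ and $b_n \to \infty$.

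The main obstacle is part~(i): one must track the error terms in the proof of \cref{le:semilocal} carefully, in particular replacing qualitative deviation statements for $\bm L$ with quantitative ones, so that $\ell/\sqrt{n}$ can be driven to $0$ and $+\infty$ along chosen sequences. The argument concludes by picking $a_n \to 0$ and $b_n \to \infty$ slowly enough for the uniformity in part~(i) and the three high-probability claims of part~(ii) to hold simultaneously.
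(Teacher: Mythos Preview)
Your treatment of part~(ii) matches the paper's: both reduce (B) and (C) to \eqref{eq:toshow} and the bound \eqref{eq:lastpart}, and (A) follows from the scaling limit since $t_n = o(\sqrt{n})$.

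For part~(i), however, your route differs from the paper's and contains a small gap. You propose to reopen the proof of \eqref{eq:corpsereviver2} and make the concentration of $\bm L$ around $\sigma^2\ell$ quantitative via Chebyshev, obtaining $\Proba(\bm L \notin (1\pm\epsilon_n)\sigma^2\ell) = O(\epsilon_n^{-2}\ell^{-1})$. But Chebyshev here requires $\Var(\hat\xi)<\infty$, i.e.\ $\E[\xi^3]<\infty$, which is \emph{not} assumed---only $\E[\xi^2]<\infty$ is. This is fixable (e.g.\ by a truncation argument giving a quantitative weak law under a first-moment assumption on $\hat\xi$), but as written the step does not go through.

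The paper avoids this entirely with a softer argument: since \eqref{eq:almostthere} already holds uniformly on $\ell\in[a\sqrt n,b\sqrt n]$ for every \emph{fixed} $0<a<b<\infty$, one invokes the elementary diagonal lemma ``if $F(A,n)\to 0$ for each fixed $A$ and $F$ is monotone in $A$, then $F(A_n,n)\to 0$ for some $A_n\to 0$'' to extract sequences $a_n\to 0$ and $b_n\to\infty$ along which the uniformity persists. This requires no new estimates at all, only the qualitative uniform convergence already established. Your approach, once the Chebyshev step is repaired, would also work and has the merit of producing explicit rates, but the paper's shortcut is both cleaner and robust to the absence of higher moments.
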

\begin{proof}
	The estimates i) with a fixed $a$ instead of $a_n$ and a fixed $b$ instead of $b_n$
	has been proved in \cref{eq:almostthere} above.
	The existence of sequences $a_n$ and $b_n$ such that i) holds 
	is a direct consequence, using the following elementary analysis lemma.  
	\begin{quote}
		Let $F(A,n)$ be a bivariate function, nonincreasing in $A$.
		We assume that for any $A>0$, we have $\lim_{n \to \infty} F(A,n)=0$.
		Then, there exists a sequence $A_n$ tending to $0$ such that $F(A_n,n)$ tends to $0$.
	\end{quote} 
	Finally ii) holds for any sequences $a_n$ and $b_n$ and any $s_n$ with $t_n^2=o(s_n)$,
	as a consequence of \cref{eq:toshow,eq:lastpart}.
\end{proof}

Finally, the following statement will be useful (with $\Omega = \{0\}$, \emph{i.e.} marking leaves) in the special case of separable permutations. 
It can either be proved as a corollary of \cref{le:semilocal2}, 
or similarly to Lemma 6.2 in~\cite{MR3813988}. 

\begin{corollary}
	Let the offspring distribution $\xi$ be critical, aperiodic, and have a finite variance. 
	Let $\myvec{\rv{v}}$ be a vector of $k \ge 1$ independently and
	uniformly selected vertices with outdegree in $\Omega$  of the conditioned tree $\bm T_n^\Omega$. 
	Then, for any fixed $t$, asymptotically as $n \to \infty$, the parities of the heights of the essential vertices induced by $\myvec{\rv{v}}$ (except the root of $\bm T_n^\Omega$) converge to  
	Bernoulli random variables of parameter $1/2$, independent among themselves, 
	and from the tree $\Sh(1.R^{[t]}(\bm T_n^\Omega, \myvec{\rv{v}}))$. 
	\label{cor:semilocal3}
\end{corollary}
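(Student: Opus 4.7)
The plan is to reduce the claim about height parities to a claim about the parities of the edge labels of the skeleton, and then to exploit the smoothness of the limiting density $h$ from \cref{lem:distrib_Tkt} to show that these parities become asymptotically uniform and independent of the shape.

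Let $e_1, \ldots, e_{2k-1}$ denote the edges of the stretched skeleton $1.R^{[t]}(\bm T_n^\Omega, \myvec{\rv{v}})$, with labels $x_1, \ldots, x_{2k-1}$, and let $v_1, \ldots, v_{2k-1}$ be the non-root essential vertices. By part~(ii) of \cref{le:semilocal2}, the shape is generic with high probability; on this event, each edge $e$ was obtained by contracting a middle segment with $x_e$ interior vertices, so the corresponding path in $\bm T_n^\Omega$ between two successive essential vertices has length $2t+1+x_e$. Hence
\[
\mathrm{ht}(v_j) \;=\; r(v_j)(2t+1) \;+\; \sum_{e \in E(v_j)} x_e,
\]
where $r(v_j)$ is the depth of $v_j$ in the unstretched skeleton and $E(v_j)$ the set of skeleton edges on the path from the root to $v_j$. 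Reducing modulo~$2$, this is an affine map $\mathbb{F}_2^{2k-1} \to \mathbb{F}_2^{2k-1}$ from edge parities to vertex parities whose linear part is invertible, since for an edge $e=uv$ one recovers $x_e \equiv \mathrm{ht}(v)-\mathrm{ht}(u)-1 \pmod{2}$. It therefore suffices to show that, jointly with the shape, the edge parities $(x_i \bmod 2)_{i}$ converge to a uniform random vector in $\{0,1\}^{2k-1}$, independent of the shape.

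For this, I apply the uniform local estimate~\eqref{eq:local2}. Fix a generic shape $G$ and a parity pattern $\myvec p \in \{0,1\}^{2k-1}$, and parameterise $\myvec x = 2\myvec y + \myvec p$. Summing~\eqref{eq:local2} over those $\myvec y$ for which $\myvec x$ lies in the good range $\{a_n\sqrt n \le \|\myvec x\|_1 \le b_n\sqrt n\}$ yields
\[
\Proba\!\left[\Sh(1.R^{[t]}(\bm T_n^\Omega,\myvec{\rv{v}})) = G,\; \myvec x \equiv \myvec p \!\pmod{2}\right]
\;\sim\; p_G \sum_{\myvec y}\left(\tfrac{\sigma c_\Omega}{\sqrt n}\right)^{\!2k-1} h\!\left(\sigma c_\Omega n^{-1/2}(2\myvec y+\myvec p)\right).
\]
The right-hand side is a Riemann sum of mesh $2\sigma c_\Omega/\sqrt n$ for the translate $\myvec u \mapsto h(\myvec u + \sigma c_\Omega n^{-1/2}\myvec p)$; by continuity and integrability of $h$, together with $\sigma c_\Omega n^{-1/2}\myvec p \to 0$, this sum converges to $2^{-(2k-1)} \int h = 2^{-(2k-1)}$. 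Thus the above probability tends to $2^{-(2k-1)} p_G$, which is precisely the product of the limiting shape probability and the uniform probability of the parity pattern $\myvec p$.

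The main technical obstacle is to ensure that contributions from labels $\myvec x$ outside the range $[a_n\sqrt n, b_n\sqrt n]$, and from non-generic shapes with large $|G|_\Omega$, are asymptotically negligible; this is exactly what part~(ii) of \cref{le:semilocal2} provides. Combined with the invertible affine correspondence between edge and vertex parities established above, this yields joint convergence of (shape, non-root height parities) to (limiting shape) $\otimes \mathrm{Unif}(\{0,1\}^{2k-1})$, which is the statement of the corollary.
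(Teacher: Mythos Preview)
Your proof is correct and follows precisely the first route the paper suggests: the paper does not give a detailed argument but states that the corollary ``can either be proved as a corollary of \cref{le:semilocal2}, or similarly to Lemma 6.2 in~\cite{MR3813988}'', and your derivation from the local estimate~\eqref{eq:local2} is exactly the former. The two key ingredients --- the invertible affine correspondence over $\mathbb{F}_2$ between edge-label parities and non-root height parities (via $x_e \equiv \mathrm{ht}(v)-\mathrm{ht}(u)-1 \pmod 2$, using that each non-root essential vertex has a unique parent edge in the proper $k$-tree), and the Riemann-sum argument on the smooth density $h$ over the sublattice $2\mathbb{Z}^{2k-1}+\myvec p$ --- are both sound, and part~(ii) of \cref{le:semilocal2} disposes of the non-generic and out-of-range contributions as you note.
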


\section{Scaling limits}
\label{sec:scaling}

\subsection{Background on permuton convergence}
\label{subsec:obs_perm}

As said in introduction, a \emph{permuton} $\mu$ is a Borel probability measure on the unit square $[0,1]^2$ with uniform marginals, that is
\[
\mu( [0,1] \times [a,b] ) = \mu( [a,b] \times [0,1] ) = b-a
\]
for all $0 \le a \le b\le 1$. Any permutation $\nu$ of size $n \ge 1$ may be interpreted as a permuton $\mu_\nu$ given by the linear combination of area measures
\[
\mu_\nu= n \sum_{i=1}^n \delta_{[(i-1)/n, i/n]\times[(\nu(i)-1)/n,\nu(i)/n]}.
\]

By definition, a random permutation $\bm{\nu}_n$ converges weakly to a random permuton $\bm{\mu}$ as $n \to \infty$ if the random probability measure $\bm{\mu}_{\bm{\nu}_n}$ converges weakly to $\bm{\mu}$.  There are different characterisations for this form of convergence \cite[Thm. 2.5]{bassino2017universal}. In particular, if $\bm{\nu}_n$ has size $n$, then the following statements are equivalent:
\begin{enumerate}
	\item There exists a permuton $\bm{\mu}$ such that $\bm{\mu}_{\bm{\nu}_n} \convdis \bm{\mu}$.
	\item For any integer $k \ge 1$ the pattern $\pat_{\rv{I}_{n,k}}(\bm{\nu}_n)$ induced by a uniform random $k$-element subset $\rv{I}_{n,k} \subseteq [n]$ admits a distributional limit $\bm{\rho}_k$.
\end{enumerate} 
In this case, the limit family $(\bm{\rho}_k)_{k}$ is consistent in the sense that $\bm{\rho}_k$ has size $k$ a.s.\ for all $k$ and  $\pat_{\rv{I}_{n,k}}(\bm{\rho}_n) \eqdist \bm{\rho}_k$ for all $1 \le k \le n$.  The permuton $\bm{\mu}$ may be constructed from the family $(\bm{\rho}_k)_{k \ge 1}$, and is hence uniquely determined by it. 
In fact, there is a bijection between random permutons and consistent families~\cite[Prop. 2.9]{bassino2017universal}.
(Compare  with a similar result for random trees~\cite[Thm. 18]{MR1207226}.)

The following permutons were introduced in~\cite{bassino2017universal,MR3813988} 
where they were proved to be the limit of some substitution-closed classes. (See also  \cite{maazoun17BrownianPermuton} for some properties of these permutons.)
\begin{enumerate}
	\item The Brownian separable permuton corresponds to the case where $\bm{\rho}_k$ is 
	the image by $\CanTree^{-1}$ of a uniform binary plane tree with $k$ leaves
	with uniform independent decorations from $\{\oplus, \ominus\}$ on its internal vertices.
	(Recall from \cref{rk:CT-1} that $\CanTree^{-1}$ can be applied to $\{\oplus, \ominus\}$-decorated trees,
	where neighbours may have the same sign.) 
	\item Let $0<p<1$ be a constant. The biased Brownian separable permuton of parameter $p$ is constructed in the same way, but instead of assigning the $\oplus$ / $\ominus$ decorations via fair coin flips, we toss a biased coin that shows $\oplus$ with probability $p$.
\end{enumerate}\medskip

Putting together the pattern characterization 
of permuton convergence (recalled above), 
this description of $\bm{\rho}_k$, 
and the connection between patterns and subtrees explained in \cref{ssec:patterns_subtrees},
we get a convenient sufficient condition for the convergence to a
(biased) separable Brownian permuton.

To state it, we recall that, if $\myvec\ell$ is an ordered sequence of marked leaves in a tree $T$,
then $R(T,\myvec\ell)$ denotes the subtree consisting of these marked leaves
and all their ancestors.
In addition, we denote by $R^\star(T,\myvec\ell)$ the tree obtained from $R(T,\myvec\ell)$
by successively removing all non-root vertices of outdegree $1$, merging their two adjacent edges.
\begin{lemma}
	\label{lem:convergence_via_induced_subtrees}
	Let $p$ be a constant in $[0,1]$ and, for each $n \ge 1$, $\bm{\nu}_n$ be a random permutation of size $n$.
	For each fixed $k \ge 1$, we take a uniform random sequence $\myvec{\bm{\ell}}= (\bm{\ell}_1, \ldots, \bm{\ell}_k)$ of $k$ leaves
	in the canonical tree $\bm{T}_n$ of $\bm{\nu}_n$.
	We make the following assumptions.
	\begin{itemize}
		\item The tree $R^\star(\bm{T}_n,\myvec{\bm{\ell}})$ should converge
		(in distribution) to a proper $k$-tree.
		\item For each non-root internal vertex $\bm u$ of $R^\star(\bm{T}_n,\myvec{\bm{\ell}})$, 
		we choose arbitrarily two leaves from $\myvec{\bm{\ell}}$, say $\bm{\ell}_{i_u}$ and $\bm{\ell}_{j_u}$,
		whose common ancestor is $\bm u$.
		We then assume that $\bm{\ell}_{i_u}$ and $\bm{\ell}_{j_u}$
		form a non-inversion asymptotically
		with probability $p$, and that,
		when $\bm u$ runs over non-root internal vertices of $R^\star(\bm{T}_n,\myvec{\bm{\ell}})$,
		these events are asymptotically
		independent from each other and from the shape $R^\star(\bm{T}_n,\myvec{\bm{\ell}})$.
	\end{itemize}
	Then $\bm{\nu}_n$ converges to the biased separable Brownian permuton of parameter $p$.
\end{lemma}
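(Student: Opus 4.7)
By the pattern characterization of permuton convergence \cite[Thm. 2.5]{bassino2017universal}, it suffices to show that for every fixed $k \ge 1$, the pattern $\pat_{\rv{I}_{n,k}}(\bm{\nu}_n)$ on a uniform $k$-subset $\rv{I}_{n,k} \subseteq [n]$ converges in distribution to $\bm{\rho}_k$, defined as the image under $\CanTree^{-1}$ of a uniform labeled binary plane tree on $k$ leaves with i.i.d.\ $\{\oplus,\ominus\}$-decorations satisfying $\Proba(\oplus) = p$. Using \cref{rk:Leaves_Elements}, the subset $\rv{I}_{n,k}$ corresponds to a uniform $k$-subset of the leaves of $\bm T_n$, and since repetitions among $k$ i.i.d.\ uniform leaves occur with probability $O(1/n)$, we may equivalently work with the sequence $\myvec{\bm{\ell}}$ from the statement.

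Following \cref{ssec:patterns_subtrees}, the pattern $\pat_{\myvec{\bm{\ell}}}(\bm{\nu}_n)$ is reconstructed from $\bm T_n$ by recording, for each pair $(\bm{\ell}_i, \bm{\ell}_j)$, the inversion sign at their closest common ancestor in $\bm T_n$; this CCA is necessarily a branching vertex of $R(\bm T_n, \myvec{\bm{\ell}})$ and hence a non-root internal vertex of $R^\star(\bm T_n, \myvec{\bm{\ell}})$. The first hypothesis yields that the shape of $R^\star$ converges to a uniform proper $k$-tree, while the second, applied to the representative-pair rule $\bm u \mapsto (\bm{\ell}_{i_u}, \bm{\ell}_{j_u})$, yields joint convergence of the associated inversion signs to an i.i.d.\ family of $\{\oplus,\ominus\}$-variables with $\Proba(\oplus) = p$, independent of the shape. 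A Skorokhod coupling then makes the pair (shape, signs) converge almost surely to a uniform labeled binary plane tree with i.i.d.\ biased $\{\oplus,\ominus\}$-decorations; applying $\CanTree^{-1}$ to this limit object (cf.\ \cref{rk:CT-1}) produces exactly $\bm{\rho}_k$.

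The delicate step is to verify that the pre-limit pattern, which a priori depends on the inversion signs of \emph{all} pairs of selected leaves and not only of the representative ones, converges to the limit pattern built from a single sign per CCA. This reduces to checking that asymptotically, each CCA of two selected leaves in $\bm T_n$ carries a monotone decoration ($\oplus$ or $\ominus$) with high probability, so that the sign attached to the representative pair at $\bm u$ then governs the inversion status of every pair with CCA $\bm u$. This monotone-decoration claim should follow from the arbitrariness of the representative choice in hypothesis 2: a persistent positive mass on simple-permutation decorations at CCAs would produce inversion-status asymmetries across pairs sharing a CCA and would make the joint distribution of the sign family depend on the chosen representatives, contradicting the postulated convergence to an i.i.d.\ family that is independent of the shape. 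Once this claim is established, the conclusion follows by continuous mapping from (shape, signs) to the pattern.
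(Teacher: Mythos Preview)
Your overall approach matches the paper's: reduce permuton convergence to pattern convergence, identify the pattern via the reduced subtree $R^\star(\bm T_n,\myvec{\bm\ell})$ together with one inversion sign per branching vertex, and read off the limit from the two hypotheses. However, the ``delicate step'' you isolate is resolved incorrectly.

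You claim that one must show each closest common ancestor carries a \emph{monotone} decoration ($\oplus$ or $\ominus$) with high probability. This is both unnecessary and, in general, false: when $\cS\neq\emptyset$, the common ancestor in $\bm T_n$ may well be decorated by a simple permutation (indeed, the computation of the parameter $p$ in \cref{thm:scaling} explicitly treats this case). Your indirect argument---that a positive mass on simple decorations would make the sign depend on the representative pair---does not go through either, because it does not.

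The correct (and simpler) observation is the one the paper records right after the lemma statement. The first hypothesis says $R^\star(\bm T_n,\myvec{\bm\ell})$ is, with high probability, a proper $k$-tree; in particular every non-root internal vertex $\bm u$ has outdegree exactly~$2$ there. This means that in $\bm T_n$ only two children of $\bm u$, say the $i_1$-th and the $i_2$-th, contain marked leaves. Hence \emph{every} pair $(\bm\ell_a,\bm\ell_b)$ with closest common ancestor $\bm u$ has one leaf in each of these two subtrees, and by \cref{ssec:patterns_subtrees} its inversion status is determined solely by whether $(i_1,i_2)$ is an inversion in the decoration of $\bm u$. This is the same for all such pairs, regardless of whether the decoration is monotone or simple. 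So on the high-probability event that $R^\star$ is binary, the full pattern is a deterministic function of (shape, one sign per branching vertex), and the second hypothesis then gives the desired limit directly. No appeal to monotonicity of decorations is needed.
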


The arbitrary choices made in the second item above are irrelevant.
Indeed, when $\bm u$ has out-degree $2$ in $R^\star(\bm{T}_n,\myvec{\bm{\ell}})$
(which is the case with high probability under the first assumption),
the fact that $\bm{\ell}_{i_u}$ and $\bm{\ell}_{j_u}$ form an inversion
or not does not depend on the choice of $\bm{\ell}_{i_u}$ and $\bm{\ell}_{j_u}$
(this an easy consequence of the discussion from \cref{ssec:patterns_subtrees}).

\subsection{Permuton convergence of random permutations from substitution-closed classes}

We now prove our first main theorem, Theorem~\ref{thm:scaling_intro}. 
We start by stating this theorem more precisely. 

\begin{theorem}
	\label{thm:scaling}
	Let $\bm{\nu}_n$ be the uniform $n$-sized permutation from a proper substitution-closed class of permutations $\cC$. 
	Let $\xi$ be the offspring distribution of the associated Galton--Watson tree model. 
	Suppose that $\E[\xi]=1$ and $\V[\xi] < \infty$. That is, either
	\[
	\cS'(\rho_\cS) > \frac{2}{(1 +\rho_\cS)^2} -1,
	\]
	or
	\[
	\cS'(\rho_\cS)  = \frac{2}{(1 +\rho_\cS)^2} -1 \qquad \text{and} \qquad 	\cS''(\rho_\cS)  < \infty.
	\]
	Then
	\[
	\bm{\mu}_{\bm{\nu}_n} \convdis \bm{\mu}^{(p)},
	\]
	with $\bm{\mu}^{(p)}$ denoting the biased Brownian separable permuton with parameter
	\begin{equation}
	\label{eq:parm_p}
	p = \frac{2}{\sigma^2}\big(\kappa(1+\kappa)^3\Occ_{12}(\kappa)+\kappa\big),
	\end{equation}
	where $\kappa$ and $\sigma^2=\V[\xi]$ are defined in \cref{prop: offspring_distr_charact} and $\Occ_{12}(z)=\sum_{\alpha\in\mathcal{S}}\socc(12,\alpha)z^{|\alpha|-2}$, 
	with $\socc(12,\alpha)$ being the number of occurrences of the pattern $12$ in $\alpha$.
	
	This includes the case where $\cC$ is the class of separable permutations, for which $\cS(z)=0$ and $p=1/2$.
\end{theorem}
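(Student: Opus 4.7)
The plan is to combine the reduction to a giant $\oplus$-indecomposable component (\cref{prop:giant_comp_perm}), the semi-local skeleton decomposition of the associated conditioned Galton--Watson tree (\cref{le:semilocal}), and the pattern readout rules from \cref{ssec:patterns_subtrees}, in order to verify the sufficient condition for permuton convergence given in \cref{lem:convergence_via_induced_subtrees}. First, by \cref{prop:giant_comp_perm}, it suffices to prove the stated convergence with $\bm{\nu}_n$ replaced by its giant $\oplus$-indecomposable component, since the small fragments have stochastically bounded total size and hence affect the permuton only on a set of asymptotically vanishing measure. Via the bijection $\Pack\circ\CanTree$ of \cref{le:bij_perm_tree}, this reduces the problem to the study of a uniform random packed tree $\bm{P}_n=(\bm{T}_n,\bm{\lambda}_{\bm{T}_n})$ with $n-O_p(1)$ leaves, which by \cref{lem:unif_packed_tree} and the discussion of \cref{subsec:PackedTrees_GW} is realized as a critical aperiodic Galton--Watson tree (\cref{prop: offspring_distr_charact} under either \eqref{eq:S_ExpMoments} or \eqref{eq:S_FiniteVariance}) of finite variance $\sigma^2$ conditioned on $n-O_p(1)$ leaves, enriched by conditionally independent decorations uniform on $\mathcal{Q}$-objects of the required size.

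Next, I would fix $k\ge 1$, pick a uniform ordered $k$-tuple of leaves $\myvec{\bm{\ell}}$ in $\bm{T}_n$, and apply \cref{le:semilocal} with $\Omega=\{0\}$ and a slowly growing sequence $t_n\to\infty$. This gives, on the one hand, that the shape of $R(\bm{T}_n,\myvec{\bm{\ell}})$ converges to a uniform proper $k$-tree with outdegrees at the $k-1$ branching vertices distributed as independent copies of $\xi^*$, so that $R^\star(\bm{T}_n,\myvec{\bm{\ell}})$ is a uniform proper $k$-tree in the limit; this handles the first hypothesis of \cref{lem:convergence_via_induced_subtrees}. On the other hand, the semi-local part of \cref{le:semilocal} describes the local neighborhoods of radius $t_n\to\infty$ around each branching vertex, in which the additional children follow independent copies of $\xi^{*}-2$ or $\hat{\xi}-1$ and receive independent uniform decorations from $\mathcal{Q}$.

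Third, for each branching vertex $\bm{u}$ in the skeleton, I would compute the asymptotic probability that the two marked leaves descended from $\bm{u}$ form a non-inversion, following the recipe of \cref{ssec:patterns_subtrees}. Conditionally on the outdegree $d$ of $\bm{u}$, its decoration is uniform on $\mathcal{Q}$-objects of size $d$. If the decoration is an $\mathcal{S}$-gadget $G$, the inversion status is determined by the relative positions in $G$ of the two children through which the marked leaves descend: summing over gadgets weighted by $t_0^{d-1}$, using the explicit form $\widehat{\mathcal{G}(\mathcal{S})}(z)=\mathcal{S}(z/(1-z))+z^2/(1-z)$, and recognizing the resulting series in terms of $\Occ_{12}$, one recovers the contribution $2\kappa(1+\kappa)^3\Occ_{12}(\kappa)/\sigma^2$. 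If the decoration is $\circledast_d$, the sign in the canonical tree is $\oplus$ (non-inversion) iff the distance in the packed tree to the closest $\mathcal{S}$-gadget ancestor is even; the outdegrees along the spine are $\hat{\xi}$-distributed and their decorations are independent uniform, hitting an $\mathcal{S}$-gadget with probability $q=(1+\kappa)^2\mathcal{S}'(\kappa)=2-(1+\kappa)^2$ by criticality, so a geometric-parity calculation gives $P_\oplus=((1+\kappa)^2-1)/(1+\kappa)^2$, which combined with the weight of $\circledast_d$-decorated branching vertices produces the remaining $2\kappa/\sigma^2$. Combining the two contributions and using \cref{prop: offspring_distr_charact} yields exactly \eqref{eq:parm_p}.

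Finally, I would establish asymptotic independence of the $k-1$ sign indicators across branching vertices, so that the second hypothesis of \cref{lem:convergence_via_induced_subtrees} is met. The signs depend only on the local neighborhoods of $\bm{u}$ up to the first $\mathcal{S}$-gadget ancestor, whose distance is stochastically bounded by a geometric random variable independent of $n$; since the branching vertices are at distance $\Theta(\sqrt{n})$ apart by the skeleton decomposition, these neighborhoods are disjoint with high probability, yielding independence. The main obstacle is the separable case $\mathcal{S}=\emptyset$: then $q=0$ and there is no $\mathcal{S}$-gadget ancestor at all, so the sign at every branching vertex is determined by the parity of its height (with the root decorated by $\ominus$), and a separate input is required to prove that these parities become asymptotically i.i.d.\ Bernoulli$(1/2)$. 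This is precisely what \cref{cor:semilocal3} delivers, and with it the separable case reproduces $p=1/2$ and matches the non-separable formula at $\mathcal{S}=\emptyset$, completing the verification of the hypotheses of \cref{lem:convergence_via_induced_subtrees}.
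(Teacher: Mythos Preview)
Your overall strategy coincides with the paper's: reduce via \cref{prop:giant_comp_perm} to $\oplus$-indecomposable permutations, identify the packed tree with a conditioned critical Galton--Watson tree, apply \cref{le:semilocal} with $\Omega=\{0\}$ and a slowly growing $t_n$, read off signs via \cref{ssec:patterns_subtrees}, and verify the hypotheses of \cref{lem:convergence_via_induced_subtrees}, treating the separable case separately through \cref{cor:semilocal3}. The independence argument and the handling of the separable case are also as in the paper.

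There is, however, a bookkeeping error in your computation of $p$. When the branching vertex $\bm u$ carries an $\mathcal S$-gadget $G$, a non-inversion arises in \emph{two} ways: (a) the two distinguished children of $\bm u$ fall in the \emph{same} branch of $G$ (their closest common ancestor in the canonical tree is then $\oplus$-decorated), or (b) they fall in different branches and the corresponding positions in the root simple permutation form a $12$. Your stated $\mathcal S$-gadget contribution $\tfrac{2}{\sigma^2}\kappa(1+\kappa)^3\Occ_{12}(\kappa)$ captures only (b). Conversely, carrying out your own $\circledast$-computation with your stated inputs gives
\[
\Proba\bigl(\bm\lambda(\bm u)=\circledast\bigr)\cdot P_\oplus
=\frac{2\kappa(1+\kappa)^2}{\sigma^2}\cdot\frac{\kappa(\kappa+2)}{(1+\kappa)^2}
=\frac{2\kappa^2(\kappa+2)}{\sigma^2},
\]
not $\tfrac{2\kappa}{\sigma^2}$ as you assert. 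The discrepancy is exactly the missing same-branch term~(a), which the paper evaluates to $\tfrac{2}{\sigma^2}\bigl(\kappa-\kappa^2(\kappa+2)\bigr)$; adding it restores the correct value of $p$. In other words, your two stated pieces happen to sum to \eqref{eq:parm_p} only because the overstatement of the $\circledast$-term coincides with the omission in the gadget term, so the decomposition as written does not stand on its own.
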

\begin{proof}
	By Proposition~\ref{prop:giant_comp_perm}, it suffices to show that the uniform $n$-sized permutation $\bm{\nu}_n$ from $\cC_{\nonp}$ satisfies
	\[
	\bm{\mu}_{\bm{\nu}_n} \convdis \bm{\mu}^{(p)}.
	\]

	We first consider the separable case $\cS= \emptyset$. 
	Let $\bm T_n$ be the canonical tree of $\bm \nu_n$.
	Here a vertex of $\bm T_n$ is decorated with $\ominus$ if and only if it has even height. 
	Without its decorations, $\bm T_n$ has the law of a critical Galton-Watson tree with finite variance
	conditioned on having $n$ leaves
	(see \cite[Sec. 2.2]{MR3813988} or \cref{sec:indecomposable_GW}; 
	for the separable case, packed trees and canonical trees only differ by their decorations).
	
	Let $k \ge 1$ be given and $\myvec{\bm{\ell}} = (\bm{\ell}_1, \ldots, \bm{\ell}_k)$ be a uniform random sequence of leaves in $\bm T_n$. 
	It follows from Lemma~\ref{le:semilocal} 
	that $R^\star(\bm T_n, \myvec{\bm{\ell}})$ is asymptotically a uniform random proper $k$-tree.
	Corollary~\ref{cor:semilocal3} yields the additional information
	that the parities of the lengths of the $2k-1$ paths in $\bm T_n$ 
	corresponding to the edges of $R^\star(\bm T_n, \myvec{\bm{\ell}})$ 
	converge jointly to $2k-1$ independent fair coin flips,
	independently of the shape $R^\star(\bm T_n, \myvec{\bm{\ell}})$. 
	Hence in the limit as $n \to \infty$ each non-root internal vertex of $R^\star(\bm T_n, \myvec{\bm{\ell}})$
	receives a sign $\oplus$ or $\ominus$ with probability $1/2$
	(meaning that the corresponding leaves, 
	in the sense of the second item of \cref{lem:convergence_via_induced_subtrees},
	form an inversion with probability $1/2$);
	moreover, these events are asymptotically independent from each other
	and from the shape $R^\star(\bm T_n, \myvec{\bm{\ell}})$. 
	As this holds for all $k\ge 1$, thanks to \cref{lem:convergence_via_induced_subtrees},
	it follows that $\bm{\nu}_n$ converges in distribution to the Brownian separable permuton $\bm{\mu}^{(1/2)}$.\vspace{3 mm}
	
	Let us now consider the case $\cS \neq \emptyset$. 
	In this case, it is more convenient to work with packed trees rather than canonical trees
	(note however that both trees have the same set of leaves).
	In particular, the random packed tree $\bm{P}_n=(\bm{T}_n,\bm{\lambda}_{\bm{T}_n})$
	associated with the uniform permutation $\bm \nu_n$ in $\cC_{\nonp}$ 
	is a Galton-Watson tree with a specific offspring distribution $\xi$
	conditioned on having $n$ leaves, with independent random decorations on each vertex
	(see \cref{sec:indecomposable_GW}). 
	As before, we fix $k \ge 1$ and consider a uniformly selected set of distinct leaves 
	$\myvec{\bm{\ell}} = (\bm{\ell}_1, \ldots, \bm{\ell}_k)$ in $\bm T_n$.
	
	By \cref{le:semilocal}, we know that
	$R^\star(\bm{T}_n,\myvec{\bm{\ell}})$ converges
	(in distribution) to a uniform proper $k$-tree
	(recall that $\xi$ is always aperiodic and that
	it has expectation $1$ and finite variance by assumption, as needed to apply \cref{le:semilocal}).
	In particular, the tree $R^\star(\bm{T}_n,\myvec{\bm{\ell}})$ is a proper tree 
	(with a root of outdegree $1$ and other internal vertices of outdegree $2$) 
	with high probability, as $n \to \infty$.
	When this is the case, since the packing construction only merges internal vertices,
	$R^\star(\bm{T}_n,\myvec{\bm{\ell}})$ coincide with 
	$R^\star(\tilde{\bm{T}}_n,\myvec{\bm{\ell}})$,
	where $\tilde{\bm{T}}_n$ is the {\em canonical} tree associated with $\bm{\nu}_n$.
	Therefore, although \cref{lem:convergence_via_induced_subtrees} is stated with the canonical tree $\tilde{\bm{T}}_n$,
	we can use it here with the packed tree $\bm T_n$ instead.
	
	Using the notation of \cref{lem:convergence_via_induced_subtrees},
	it remains to analyse whether $\bm\ell_{i_u}$ and $\bm\ell_{j_u}$ form an inversion
	or not (for non-root internal vertices $\bm u$ of $R^\star(\bm{T}_n,\myvec{\bm{\ell}})$).
	
	We recall (see \cref{ssec:patterns_subtrees})
	that if $\bm u$ is decorated with an $\cS$-gadget,
	then whether $\bm \ell_{i_u}$ and $\bm \ell_{j_u}$ form an inversion or not
	is determined by the decoration of $\bm u$ and by which branches attached to $\bm u$
	contain $\bm \ell_{i_u}$ and $\bm \ell_{j_u}$.
	This information is contained in $\Sh(s.R^{[0]}(\bm{T}_n,\myvec{\bm{\ell}}))$ for any $s>0$.
	
	On the other hand, if $\bm u$ is decorated with $\circledast$, 
	then in order to determine whether $\bm \ell_{i_u}$ and $\bm \ell_{j_u}$ form an inversion or not, 
	we have to recover the parity of the distance of $\bm u$
	to its first ancestor decorated with an $\cS$-gadget 
	(if it exists, otherwise to the root of $\bm T_n$). 
	
	Take $t_n$ tending to infinity, but with $t_n=o(\sqrt{n})$.
	By Lemma~\ref{le:semilocal}, $\bm u$ has asymptotically $t_n$ ancestors 
	with out-degrees $\hat{\xi}_1, \hat{\xi}_2, \ldots, \hat{\xi}_{t_n}$ 
	being independent copies of $\hat{\xi}$ defined in~\eqref{eq:xihat}. 
	Moreover the vertex $\bm u$ and each of its ancestors receive a decoration
	that gets drawn independently and uniformly at random 
	among all $\widehat{\GGG(\SSS)}$-decorations with size equal to the out-degree of the vertex. 
	In this setting, with high probability, one of the $t_n$ ancestors will receive an $\cS$-gadget as decoration. 
	Therefore, with high probability, whether $\bm \ell_{i_u}$ and $\bm \ell_{j_u}$ form an inversion
	is determined by $\Sh(s.R^{[t_n]}(\bm{T}_n,\myvec{\bm{\ell}}))$ for any $s>0$.
	
	We say that two families (indexed by $\NN$) of probability distributions are \emph{close} 
	when their total variation distance tends to $0$ as $n$ tends to infinity. 
	By Lemma~\ref{le:semilocal}, the distributions of the random trees $\Sh(s_n.R^{[t_n]}(\bm{T}_n,\myvec{\bm{\ell}}))$
	and $\Sh(\bm{T}^{k,t_n}_{\{0\}})$ are close, 
	for a well-chosen sequence $s_n$. 
	From the above discussion, this implies that
	the joint distributions of 
	$R^\star(\bm{T}_n,\myvec{\bm{\ell}})$ and
	\[\Big(\One\Big[\text{$\bm \ell_{i_u}$ and $\bm \ell_{j_u}$ form an inversion
		in $(\bm{T}_n,\myvec{\bm{\ell}})$}\Big]\Big)_{\bm u}\]
	are close to the distributions of the same variables in the limiting tree $\bm{T}^{k,t_n}_{\{0\}}$.
	When $n$ tends to infinity, these tend a.s.\ (with the obvious coupling between the $\bm{T}^{k,t_n}_{\{0\}}$)
	to the same variables in $\bm{T}^{k,\bm{t}^*}_{\{0\}}$,
	where $\bm{t}^*$ denotes the minimal radius such that each internal 
	$\circledast$-decorated essential vertex (different from the root) has an ancestor decorated by an $\cS$-gadget.
	
	In the limiting tree $\bm{T}^{k,\bm{t}^*}_{\{0\}}$,
	the neighbourhoods of the essential vertices $\bm u$ 
	are independent from each other
	and all have the same distribution (which does not depend on $k$,
	nor on the shape $R^\star(\bm{T}^{k,\bm{t}^*}_{\{0\}},\myvec{\bm{\ell}})$,
	the latter being the proper $k$-tree taken at step 1 of the construction) .
	Therefore the probability that $\bm \ell_{i_u}$ and $\bm \ell_{j_u}$ form a non-inversion
	tend to some parameter $p$ in $[0,1]$,
	which depends only on the permutation class $\mathcal C$ we are working with.
	Moreover these events are asymptotically independent from each other 
	and from the shape $R^\star(\bm{T}_n,\myvec{\bm{\ell}})$.
	From \cref{lem:convergence_via_induced_subtrees}, this implies that $\bm{\mu}_{\bm{\nu}_n} \convdis \bm{\mu}^{(p)}$.\medskip

	It remains to calculate an explicit expression for the limiting probability $p$. 
	For this, we consider $k=2$, \emph{i.e.,} $p$ is the probability that, 
	in the limiting tree $\bm{T}^{2,\bm{t}^*}_{\{0\}}$, 
	the two marked leaves $\bm\ell_1$ and $\bm\ell_2$ form a non-inversion.
	
	For each integer $m \ge 1$, let $\rv{G}_m$ be drawn uniformly at random among all $m$-sized $\widehat{\GGG(\SSS)}$-objects, \emph{i.e.,} 
	$$\Proba(\rv{G}_m=G)=1/q_m,\quad \text{for all}\quad G\in\widehat{\GGG(\SSS)} \text{ of size }m,$$ 
	where we recall that $\mathcal Q(z)=\widehat{\GGG(\SSS)}(s)=\sum_{k\geq 2}q_kz^k$ is the generating function in \cref{eq:G}. 
	We also recall the following three distributions (see \cref{eq:offspring_distribution_packed_tree,eq:xihat,eq:xistar})
	\begin{equation}
	\Proba(\xi=k)=q_kt_0^{k-1}, \quad \Proba(\hat{\xi}=k)=kq_kt_0^{k-1},\quad \Proba(\xi^*=k)=kq_k(k-1)t_0^{k-1}/\sigma^2,
	\end{equation}
	where $t_0=\frac{\kappa}{1+\kappa}$ is the parameter determined in \cref{prop: offspring_distr_charact} as the unique number such that
	\begin{equation}
	\label{eq:proper_KAPPA}
	\cS'(\kappa) = 2/(1+\kappa)^2 -1,   
	\end{equation} 
	$\cS(z)$ being the generating functions for simple permutations in the considered substitution-closed class $\mathcal{C}$.
	
	To determine whether $\bm\ell_1$ and $\bm\ell_2$ form an inversion or not, 
	there are two cases to consider, depending on whether the decoration $\bm{\lambda}_{\bm{T}^{k,\bm{t}^*}_{\{0\}}}(\bm u)\eqqcolon\bm{\lambda}(\bm u)$ of 
	the closest common ancestor $\bm u$ of $\bm\ell_1$ and $\bm\ell_2$ is an $\cS$-gadget or not. 
	
	We start with the case where it is not. Let $\bm u'$ be the closest ancestor of $\bm u$ that is decorated with an $\cS$-gadget. 
	The limiting probability for $\bm\ell_1$ and $\bm\ell_2$ to form a non-inversion in this case is given by
	\begin{multline*}
	\Proba\big(\bm{\lambda}(\bm u)=\circledast,d(\bm u,\bm u') \text{ is even and}>0\big)\\
	=\sum_{k \ge 2}\Proba\big(d(\bm u,\bm u') \text{ is even}\big|\bm{\lambda}(\bm u)=\circledast\big)\Prb{\rv{G}_{k}=\circledast}\Proba(\xi^*=k)
	\end{multline*}
	where we used that $\bm u$ take offsprings according to $\xi^*$. Since the ancestors of $\bm u$ (between $\bm u$ and $\bm u'$) take offsprings according to $\hat{\xi},$ we have
	\[
	\Proba\big(d(\bm u,\bm u') \text{ is even and}>0\big|\bm{\lambda}(\bm u)=\circledast\big)=\Proba(\text{Geom}(\eta) \text{ is even and}>0)=\frac{\eta-1}{\eta-2},
	\]
	where
	\begin{multline*}
	\eta=\Proba(\bm{G}_{\hat{\xi}}\neq\circledast)=1-\sum_{k \ge 2}\Proba(\bm{G}_{k}=\circledast)\Proba(\hat{\xi}=k)\\
	=1-\sum_{k \ge 2}\frac{1}{q_k}kq_kt_0^{k-1}=1-\frac{t_0(2-t_0)}{(1-t_0)^2}=1-\kappa(\kappa+2).
	\end{multline*}
	Summing-up
	\begin{equation}
	\begin{split}
	\label{eq:keyres1}
	\Proba\big(\bm{\lambda}(\bm u)=\circledast,d(\bm u,\bm u')\text{ is even}\big)&=\sum_{k \ge 2}\frac{\eta-1}{\eta-2}\Prb{\rv{G}_{k}=\circledast}\Proba(\xi^*=k)\\
	&=\frac{1}{\sigma^2}\frac{\kappa(\kappa+2)}{(1+\kappa)^2}\sum_{k \ge 2}k(k-1)t_0^{k-1}=\frac{2}{\sigma^2}\kappa^2(\kappa+2),
	\end{split}
	\end{equation}
	where in the last equality we used that $\sum_{k \ge 2}k(k-1)t_0^{k-1}=\frac{2t_0}{(1-t_0)^3}$ and $t_0=\frac{\kappa}{1+\kappa}$.
	
	Now consider the case where the decoration $\bm{\lambda}(\bm u)$ is an $\cS$-gadget.  
	That is, it consists of a root decorated with a simple permutation with several branches, each of which may be a leaf or a $\oplus$-decorated vertex to which at least $2$ leaves are attached. 
	By definition, the leaves $\bm \ell_1$ and $\bm \ell_2$ are descendants of different children of $\bm u$, say the $\bm i_1$-th and $\bm i_2$-th. 
	These $\bm i_1$-th and $\bm i_2$-th branches attached to $\bm u$ identify two leaves (the $\bm i_1$-th and $\bm i_2$-th) of 
	the $\cS$-gadget $\bm{\lambda}(\bm u)$ decorating $\bm u$. 
	If these two leaves belong to the same branch attached to the root of $\bm{\lambda}(\bm u)$, 
	then $\bm \ell_1$ and $\bm \ell_2$ do not induce an inversion, as their closest common ancestor in $\bm{\lambda}(\bm u)$ is decorated by $\oplus$ (see also \cref{ssec:patterns_subtrees}). 
	Otherwise, when they belong to two different branches attached to the root of $\bm{\lambda}(\bm u)$ (say the $\bm j_1$-th and $\bm j_2$-th), 
	it depends on the simple permutation $\bm \alpha$ appearing in the root of $\bm{\lambda}(\bm u)$: 
	$\bm \ell_1$ and $\bm \ell_2$ do not induce an inversion if and only if $\pat_{\{\bm j_1,\bm j_2\}}(\bm \alpha)=12$. 
	
	Therefore, in the case where the decoration $\bm{\lambda}(\bm u)$ is an $\cS$-gadget, the limiting probability for $\bm\ell_1$ and $\bm\ell_2$ to form a non-inversion is given by
	\begin{multline}
	\label{eq:split_prob}
	\Proba\big(\bm{\lambda}(\bm u)\neq\circledast,\;\bm i_1,\bm i_2 \text{ are in the same branch of }\bm{\lambda}(\bm u)\big)\\
	+\Proba\big(\bm{\lambda}(\bm u)\neq\circledast,\;\bm i_1,\bm i_2 \text{ are not in the same branch of } \bm{\lambda}(\bm u),\pat_{\{\bm j_1,\bm j_2\}}(\bm \alpha)=12\big).
	\end{multline}
	We start by computing the first probability
	\begin{multline*}
	\Proba\big(\bm{\lambda}(\bm u)\neq\circledast,\;\bm i_1,\bm i_2 \text{ are in the same branch of }\bm{\lambda}(\bm u)\big)\\
	=\sum_{k \ge 4}\Proba\big(\bm i_1,\bm i_2 \text{ are in the same branch of }\bm{\lambda}(\bm u)\big|\bm{\lambda}(\bm u)\neq\circledast,d^{+}(\bm u)=k\big)\Prb{\bm{\lambda}(\bm u)\neq\circledast,d^{+}(\bm u)=k} \\
	=\sum_{k \ge 4}\Proba\big(\bm i_1,\bm i_2 \text{ are in the same branch of }\bm{\lambda}(\bm u)\big|\bm{\lambda}(\bm u)\neq\circledast,d^{+}(\bm u)=k\big)\Prb{\rv{G}_{k}\neq\circledast}\Proba(\xi^*=k).
	\end{multline*}
	Note that, denoting with $\text{root-deg}(\bm{\lambda}(\bm u))$ the degree of the root of the decoration of the vertex $\bm u,$ we have
	\begin{multline*}
	\Proba\big(\bm i_1,\bm i_2\text{ are in the same branch of }\bm{\lambda}(\bm u)\big|\bm{\lambda}(\bm u)\neq\circledast,d^{+}(\bm u)=k\big)=\\
	\sum_{a=4}^{k}\frac{\binom{k-1}{a-1}s_{a}}{q_k-1}\Proba\big(\bm i_1,\bm i_2 \text{ are in the same branch of }\bm{\lambda}(\bm u)\big|\bm{\lambda}(\bm u)\neq\circledast,d^{+}(\bm u)=k,\text{root-deg}(\bm{\lambda}(\bm u))=a\big),
	\end{multline*}
	where we use that $\Proba\big(\text{root-deg}(\bm{\lambda}(\bm u))=a|\bm{\lambda}(\bm u)\neq\circledast,d^{+}(\bm u)=k\big)=s_{a}\binom{k-1}{a-1}\tfrac{1}{q_k-1}$, 
	since the number of $\cS$-gadgets of size $k$ with root-degree $a$ is equal to $s_{a}$ multiplied by the number of compositions of $k$ in $a$ parts. The probability 
	$$\Proba\big(\bm i_1,\bm i_2 \text{ are in the same branch of }\bm{\lambda}(\bm u)\big|\bm{\lambda}(\bm u)\neq\circledast,d^{+}(\bm u)=k,\text{root-deg}(\bm{\lambda}(\bm u))=a\big)$$
	is simply the probability that two uniformly chosen
	elements a uniform random composition of $k$ in $a$ parts
	(seen as a list of $k$ elements and $a-1$ bars)
	are in the same part.
	Summing over the positions of the two uniformly chose,
	this probability is easily seen to be
	\begin{equation}
	\label{eq:comb_comput}
	\binom{k-1}{a-1}^{-1} \binom{k}{2}^{-1} \sum_{i<j}\binom{k-(j-i)-1}{a-1} = \frac{2}{a+1}\cdot\frac{k-a}{k-1},
	\end{equation}
	where the last equality is obtained via a computer algebra system.
	Summing-up,
	\begin{multline*}
	\Proba\big(\bm i_1,\bm i_2 \text{ are in the same branch of }\bm{\lambda}(\bm u)\big|\bm{\lambda}(\bm u)\neq\circledast,d^{+}(\bm u)=k\big)\\
	=\frac{1}{q_k-1}\sum_{a=4}^{k}s_{a}\binom{k-1}{a-1}\frac{2}{a+1}\cdot\frac{k-a}{k-1}=\frac{2}{q_k-1}\sum_{a=4}^{k}\frac{s_{a}}{a+1}\binom{k-2}{a-1},
	\end{multline*}
	and so
	\begin{equation}
	\begin{split}
	\label{eq:keyres2}
	\Proba\big(\bm{\lambda}(\bm u)\neq&\circledast,\;\bm i_1,\bm i_2 \text{ are in the same branch of }\bm{\lambda}(\bm u)\big)\\
	&=\sum_{k \ge 4}\frac{2}{q_k-1}\sum_{a=4}^{k}\frac{s_{a}}{a+1}\binom{k-2}{a-1} \Prb{\rv{G}_{k}\neq\circledast}\Proba(\xi^*=k)\\
	&=\frac{2}{\sigma^{2}}\sum_{a\ge 4}\frac{s_{a}}{a+1}\sum_{k \ge a}k(k-1)\binom{k-2}{a-1}t_{0}^{k-1}\\
	&=\frac{2}{\sigma^{2}}\frac{t_0}{(1-t_0)^3}\sum_{a\ge 4}s_{a}a(\tfrac{t_0}{1-t_0})^{a-1}=\frac{2}{\sigma^{2}}\frac{t_0}{(1-t_0)^3}\mathcal{S}'(\tfrac{t_0}{1-t_0})=\frac{2}{\sigma^{2}}(\kappa-\kappa^2(\kappa+2)),
	\end{split}
	\end{equation}
	where we used the formal power series identity $\sum_{k \ge a}k(k-1)\binom{k-2}{a-1}t^{k-1} = a(a+1) \frac{t^a}{(1-t)^{a+2}}$ to go from the third to the fourth line, 
	and in the last equality we used \cref{eq:proper_KAPPA} and $\kappa = \frac{t_0}{1-t_0}$.	
	
	It remains to compute the second term in \cref{eq:split_prob}. 
	Using the obvious notation $\mathcal{S}^{\leq k} = \cup_{j \leq k} \mathcal{S}^{j}$,
	we start by determining
	\begin{multline}
	\label{eq:third_part_prob}
	\Proba\big(\bm i_1,\bm i_2 \text{ are not in the same branch of } \bm{\lambda}(\bm u),\pat_{\{\bm j_1,\bm j_2\}}(\bm \alpha)=12\big|\bm{\lambda}(\bm u)\neq\circledast,d^{+}(\bm u)=k\big)\\
	=\sum_{\alpha\in\mathcal{S}^{\leq k}}\Proba\big(\bm i_1,\bm i_2 \text{ are not in the same branch }\text{of } \bm{\lambda}(\bm u),\pat_{\{\bm j_1,\bm j_2\}}(\bm \alpha)=12\big|\bm{\lambda}(\bm u)\in G_{\alpha}^k\big)\\
	\cdot\Proba\big(\bm{\lambda}(\bm u) \in G_{\alpha}^k\big|\bm{\lambda}(\bm u)\neq\circledast,d^{+}(\bm u)=k\big),
	\end{multline}
	where $G_{\alpha}^k$ denotes the set of $\cS$-gadgets of size $k$ with root-label $\alpha$. Trivially, 
	recalling that $\binom{k-1}{|\alpha|-1}$ is the number of $\cS$-gadgets with $k$ leaves and root decorated by $\alpha$, we have 
	\[
	\Proba\big(\bm{\lambda}(\bm u)\in G_{\alpha}^k\big|\bm{\lambda}(\bm u)\neq\circledast,d^{+}(\bm u)=k\big)=\frac{\binom{k-1}{|\alpha|-1}}{q_k-1}.
	\]	
	Using again the formula \eqref{eq:comb_comput},
	\begin{multline*}
	\Proba\big(\bm i_1,\bm i_2 \text{ are not in the same branch of }\bm{\lambda}(\bm u),\pat_{\{\bm j_1,\bm j_2\}}(\bm \alpha)=12\big|\bm{\lambda}(\bm u)\in G_{\alpha}^k\big)\\
	=\occ(12,\alpha)\Bigg(1-\frac{2}{{|\alpha|}+1}\cdot\frac{k-{|\alpha|}}{k-1}\Bigg),
	\end{multline*}
	where $\occ(12,\alpha)=\binom{|\alpha|}{2}^{-1} \socc(12,\alpha)$
	is the probability that two random elements of $\alpha$ do not form an inversion.
	Substituting the last two equations in \cref{eq:third_part_prob} we have
	\begin{multline*}
	\Proba\big(\bm i_1,\bm i_2 \text{ are not in the same branch of } \bm{\lambda}(\bm u),\pat_{\{\bm j_1,\bm j_2\}}(\bm \alpha)=12\big|\bm{\lambda}(\bm u)\neq\circledast,d^{+}(\bm u)=k\big)\\
	=\sum_{\alpha\in\mathcal{S}^{\leq k}}\occ(12,\alpha)\Bigg(1-\frac{2}{{|\alpha|}+1}\cdot\frac{k-{|\alpha|}}{k-1}\Bigg)\frac{\binom{k-1}{|\alpha|-1}}{q_k-1}.
	\end{multline*}
	Recalling that $\Prb{\bm{\lambda}(\bm u)\neq\circledast,d^{+}(\bm u)=k} = \Prb{\rv{G}_{k}\neq\circledast}\Proba(\xi^*=k) = (q_k-1)k(k-1)t_0^{k-1}/\sigma^2$, 
	it follows that
	\begin{equation}
	\begin{split}
	\label{eq:keyres3}
	\Proba\big(\bm{\lambda}(\bm u)\neq&\circledast,\;\bm i_1,\bm i_2 \text{ are not in the same branch of } \bm{\lambda}(\bm u),\pat_{\{\bm j_1,\bm j_2\}}(\bm \alpha)=12\big)\\
	&=\frac{1}{\sigma^2}\sum_{k \ge 4}\sum_{\alpha\in\mathcal{S}^{\leq k}}\occ(12,\alpha)\Bigg(1-\frac{2}{{|\alpha|}+1}\cdot\frac{k-{|\alpha|}}{k-1}\Bigg)\binom{k-1}{|\alpha|-1}k(k-1)t_0^{k-1}\\
	&=\frac{1}{\sigma^2}\sum_{\alpha\in\mathcal{S}}\occ(12,\alpha)\sum_{k \ge |\alpha|}\Bigg(\binom{k-1}{|\alpha|-1}-\frac{2}{{|\alpha|}+1}\cdot\binom{k-2}{|\alpha|-1}\Bigg)k(k-1)t_0^{k-1}\\
	&=\frac{2}{\sigma^2}\frac{t_0}{(1-t_0)^{4}}\sum_{\alpha\in\mathcal{S}}\socc(12,\alpha)\big(\tfrac{t_0}{1-t_0}\big)^{|\alpha|-2}=\frac{2}{\sigma^2}\kappa(1+\kappa)^3\Occ_{12}(\kappa), 
	\end{split}
	\end{equation}	
	where, to go from the third to the fourth line, we used 
	that $\socc(12,\alpha) = \occ(12,\alpha) \binom{|\alpha|}{2}$ and 
	the formal power series identity $\sum_{k \ge a}\Bigg(\binom{k-1}{a-1}-\frac{2}{{a}+1}\cdot\binom{k-2}{a-1}\Bigg)k(k-1)t^{k-1} = (a-1) a\frac{t^{a-1}}{(1-t)^{a+2}}$, 
	and in the last equality we used that $\kappa = \frac{t_0}{1-t_0}$ (\emph{i.e.} $t_0 = \frac{\kappa}{1+\kappa}$) and the definition of $\Occ_{12}$.
	Summing up the results in \cref{eq:keyres1,eq:keyres2,eq:keyres3} we conclude that \cref{eq:parm_p} holds.	
\end{proof}

\section{Local convergence}
\label{sec:local}

In this section we investigate the local limits of uniform permutations in a fixed substitution-closed class $\mathcal{C}$. 
We work under the following assumption.
\begin{assumption}
	\label{ass:type1}
	Consider the associated random variable $\xi$ defined by \cref{eq:offspring_distribution_packed_tree}.
	We assume that $\esper[\xi]=1$.
\end{assumption}
We highlight that in this section we do not assume the finite variance hypothesis (as done in \cref{sec:scaling}). See also \cref{prop: offspring_distr_charact} for an explicit characterization of this assumption.

Before stating our results, we recall in the following two sections the notions of 
local convergence for permutations and trees.

\subsection{Local limits for permutations}
\label{ssec:reminder_local_limits}
In this section we recall the definition of local topology for permutations recently introduced by Borga in \cite{Borga2019}.
We start by defining finite and infinite {\em rooted} permutations. 
Then we introduce a local distance and the corresponding notion of convergence for deterministic sequences of rooted and unrooted permutations.
Finally, we extend this notion of convergence
(in two non-equivalent ways) to sequences of \emph{random} unrooted permutations.

\begin{definition}
	A \emph{finite rooted permutation} is a pair $(\nu,i),$ where $\nu\in\SG^n$ and $i\in[n]$ for some $n\in\NN.$
\end{definition}

We denote  with $\SG^n_{\bullet}$ the set of rooted permutations of size $n$ and with $\SG_{\bullet}:=\bigcup_{n\in\NN}\SG^n_{\bullet}$ the set of finite rooted permutations. We write sequences of finite rooted permutations in $\SG_{\bullet}$ as $(\nu_n,i_n)_{n\in\NN}.$ 

To a rooted permutation $(\nu,i),$ we associate (as shown in the right-hand side of \cref{fig:rest}) the pair $(\Asi,\leqsi),$  where $\Asi\coloneqq[-i+1,|\nu|-i]$ is a finite interval containing 0 and $\leqsi$ is a total order on $\Asi,$ defined for all $\ell,j\in \Asi$ by
\begin{equation*}
\ell\leqsi j\qquad\text{if and only if}\qquad \nu({\ell+i})\leq\nu({j+i})\;.
\end{equation*} 
Informally, the elements of $\Asi$ should be thought of as the column indices of the diagram of $\nu$,
shifted so that the root is in column $0$.
The order $\leqsi$ then corresponds to the vertical order on the dots in the corresponding columns.
\begin{figure}[htbp]
	\centering
	\includegraphics[scale=.70]{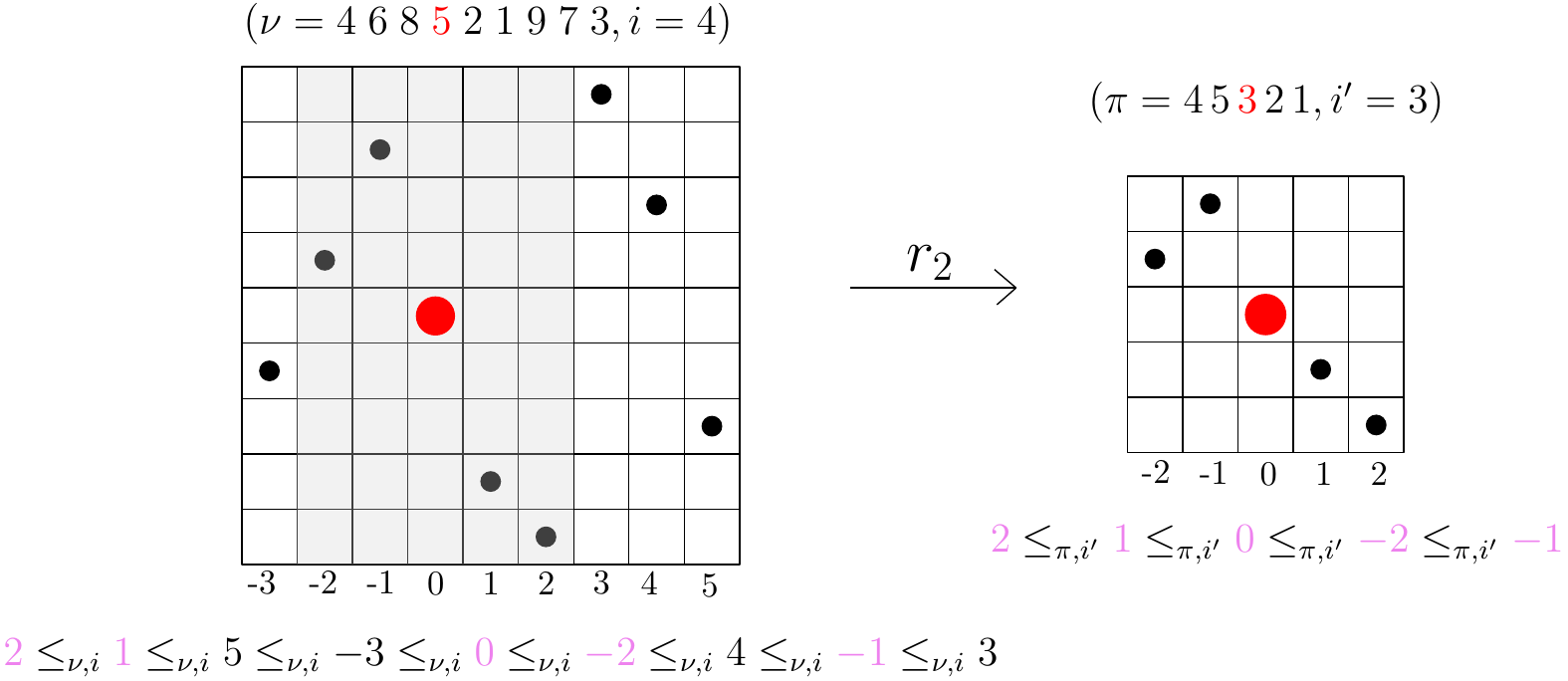}\\
	\caption{Two rooted permutations and the associated total orders. The big red dot indicates the root of the permutation. The vertical grey strip and the relation between the two rooted permutations will be clarified later.  \label{fig:rest}}
\end{figure}

Clearly this map is a bijection from the space of finite rooted permutations $\SG_{\bullet}$ to the space of total orders on finite integer intervals containing zero. Consequently and throughout the paper, we identify every rooted permutation  $(\nu,i)$ with the total order $(\Asi,\leqsi).$

Thanks to the identification between rooted permutations and total orders, the following definition of infinite rooted permutation is natural. 

\begin{definition}
	We call \emph{infinite rooted permutation} a pair $(A,\preccurlyeq)$ where $A$ is an infinite interval of integers containing 0 and $\preccurlyeq$ is a total order on $A$. We denote the set of infinite rooted permutations by $\SG^{\infty}_\bullet.$
\end{definition} 
We highlight that infinite rooted permutations can be thought of as rooted at 0. We set 
$$\tilde{\SG_{\bullet}}\coloneqq\Sr\cup\SG^\infty_{\bullet},$$
which is the set of all (finite and infinite) rooted permutations. 

We now introduce the following \textit{restriction function around the root} defined, for every $h\in\NN$, as follows
\begin{equation}
\label{rhfunct}
\begin{split}
r_h \colon\quad &\tilde{\SG_{\bullet}}\;\longrightarrow \qquad \;\SG_\bullet;\\
(A,&\preccurlyeq) \mapsto \big(A\cap[-h,h],\preccurlyeq\big)\;.
\end{split}
\end{equation}
We can think of restriction functions as a notion of neighbourhood around the root.
For finite rooted permutations we also have the equivalent description of the restriction functions $r_h$ in terms of consecutive patterns: if $(\nu,i)\in\Sr$ then $r_h(\nu,i)=(\text{pat}_{[a,b]}(\nu),i)$ where we take \hbox{$a=\max\{1,i-h\}$} and $b=\min\{|\nu|,i+h\}.$

The \emph{local distance} $d_p$ on the set of (possibly infinite) rooted permutations $\tilde{\SG_{\bullet}}$ is defined as follows: 
given two rooted permutations $(A_1,\preccurlyeq_1),(A_2,\preccurlyeq_2)\in\tilde{\SG_{\bullet}},$
\begin{equation}
\label{distance}
d_p\big((A_1,\preccurlyeq_1),(A_2,\preccurlyeq_2)\big)=2^{-\sup\big\{h\in\NN\;:\;r_h(A_1,\preccurlyeq_1)=r_h(A_2,\preccurlyeq_2)\big\}},
\end{equation}
with the classical conventions that $\sup\emptyset=0,$ $\sup\NN=+\infty$ and $2^{-\infty}=0.$ 
The metric space $(\tilde{\SG_{\bullet}},d_p)$ is a compact space and so Polish, \emph{i.e.}, separable and complete (see \cite[Theorem 2.16]{Borga2019}).

The above distance entails a notion of convergent sequences of {\em rooted} permutations.
For a sequence $\nu_n$ of {\em unrooted} permutations,
we consider the sequence of {\em random} rooted permutations $(\nu_n,\bm i_n)$,
where $\bm i_n$ is a uniform random index of $\nu_n$.
We say that $\nu_n$ converges in the Benjamini--Schramm
sense if the sequence of {\em random} rooted permutations $(\nu_n,\bm i_n)$ 
converges in distribution for the above distance $d_p$.
This definition is inspired from Benjamini--Schramm convergence for graphs (see \cite{benjamini2001recurrence}).

Benjamini--Schramm convergence can be extended in two different ways 
for sequences of random permutations $(\bm \nu_n)_{n \ge 1}$:
the \emph{annealed} and the \emph{quenched}  version of the Benjamini--Schramm convergence.
These two different versions come from the fact that there are two sources of randomness, 
one for the choice of the random permutation $\bm \nu_n$, and one for the random root $\bm i_n$.
Intuitively, in the annealed version, the random permutation
and the random root are taken simultaneously,
while in the quenched version, the random permutation should be thought as frozen
when we take the random root.

We now give the formal definitions.
In both cases, $(\bm{\nu}_n)_{n\in\NN}$ denotes a sequence of random permutations in $\SG$ and $\bm{i}_n$ denotes a uniform index of $\bm{\nu}_n,$ \emph{i.e.,} a uniform integer in $[1,|\bm{\nu}_n|]$.

\begin{definition}[Annealed version of the Benjamini--Schramm convergence]\label{defn:weakweakconv}
	We say that $(\bm{\nu}_n)_{n\in\NN}$ \emph{converges in the annealed Benjamini--Schramm sense} to a random variable $\bm{\nu}_{\infty}$ with values in $\Sri$ if the sequence of random variables $(\bm{\nu}_n,\bm{i}_n)_{n\in\NN}$ converges in distribution to $\bm{\nu}_{\infty}$ with respect to the local distance $d_p$.
	In this case we write $\bm{\nu}_n\stackrel{aBS}{\longrightarrow}\bm{\nu}_\infty$ instead of  $(\bm{\nu}_n,\bm{i}_n)\convdis\bm{\nu}_\infty.$
\end{definition}

\begin{definition}[Quenched version of the Benjamini--Schramm convergence]
	\label{strongconv}
	We say that $(\bm{\nu}_n)_{n\in\NN}$ \emph{converges in the quenched Benjamini--Schramm sense} 
	to a random measure $\bm{\mu}^\infty$ on $\Sri$
	if the sequence of conditional laws $\big(\mathcal{L}\big((\bm{\nu}_n,\bm{i}_n)\big|\bm{\nu}_n\big)\big)_{n\in\NN}$
	converges in distribution to $\bm{\mu}^\infty$ with respect to the weak topology induced by the local distance $d_p$.
	In this case we write $\bm{\nu}_n\stackrel{qBS}{\longrightarrow}\bm{\mu}^\infty$ instead of $\mathcal{L}\big((\bm{\nu}_n,\bm{i}_n)\big|\bm{\nu}_n\big)\convdis\bm{\mu}^\infty$.
\end{definition}

We highlight that, in the annealed version, the limiting object is a {\em random variable} with values in $\Sri$,
while for the quenched version, the limiting object $\bm{\mu}^{\infty}$ is a {\em random measure} on $\Sri$.

We have the following characterizations of the two versions of the Benjamini--Schramm convergence 
\cite[Section 2.5]{Borga2019}. 
Recall that $\widetilde{\coc}(\pi,\nu)$ denotes the proportion of consecutive occurrences of a pattern $\pi$ in $\nu$, 
namely, 
\[\widetilde{\coc}(\pi,\nu) = \frac{\coc(\pi,\nu)}{n}=\frac{1}{n}\mathrm{card}\big\{\text{intervals } I \subseteq [n] \text{ s.t. pat}_I(\nu)=\pi\big\}.\]

\begin{theorem}
	\label{thm:local_conv_perm_charact}
	For any $n\in\NN$, let $\bm{\nu}_n$ be a random permutation of size $n$. Then
	\begin{enumerate}
		\item The sequence $(\bm{\nu}_n)_{n\in\NN}$ converges in the annealed Benjamini--Schramm sense
		to some $\bm{\nu}_{\infty}$ if and only if
		there exist non-negative real numbers $(\Delta_{\pi})_{\pi\in\SG}$ such that 
		$$\E[\widetilde{\coc}(\pi,\bm{\nu}_n)]\to\Delta_{\pi},\quad\text{for all patterns}\quad\pi\in\SG.$$
		\item The sequence $(\bm{\nu}_n)_{n\in\NN}$ converges in the quenched Benjamini--Schramm sense
		to some $\bm{\mu}^\infty$ if and only if      
		there exist non-negative real random variables $(\bm{\Lambda}_\pi)_{\pi\in\SG}$ such that
		$$\big(\widetilde{\coc}(\pi,\bm{\nu}_n)\big)_{\pi\in\SG}\convdis(\bm{\Lambda}_{\pi})_{\pi\in\SG},$$
		w.r.t. the product topology. 
	\end{enumerate}
\end{theorem}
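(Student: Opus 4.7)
The plan is to exploit the fact that the topology on the compact Polish space $\tilde{\SG_\bullet}$ is generated by the clopen cylinder sets
\begin{equation}
C_{h, \sigma, j} := \{x \in \tilde{\SG_\bullet} : r_h(x) = (\sigma, j)\},
\end{equation}
indexed by $h \in \NN$ and a finite rooted permutation $(\sigma, j)$. Indeed, by definition of $d_p$, two points lie at distance at most $2^{-h}$ if and only if they agree under $r_h$, so each $C_{h, \sigma, j}$ is simultaneously open and closed. The family is countable, has empty boundaries, and forms a $\pi$-system generating the Borel $\sigma$-algebra; by the Portmanteau theorem, weak convergence on $\tilde{\SG_\bullet}$ is equivalent to convergence of the measure of every cylinder.

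The second ingredient is a direct combinatorial identity. For a deterministic $\nu \in \SG^n$ rooted at a uniform position $\bm{i}_n$, inspection of $r_h$ shows that for any pattern $\pi$ with $|\pi| = 2h+1$,
\begin{equation}
\Prb{r_h(\nu, \bm{i}_n) = (\pi, h+1)} = \widetilde{\coc}(\pi, \nu),
\end{equation}
since each interior root in $[h+1, n-h]$ realizing the event corresponds to one consecutive occurrence of $\pi$. The cylinders with $|\sigma| < 2h+1$ are realized only when $\bm{i}_n$ lies within distance $h$ of $\{1,n\}$, so they have probability at most $2h/n$. To pass between pattern sizes, I would use the identity $\widetilde{\coc}(\sigma, \nu) = \sum_{\pi} \widetilde{\coc}(\pi, \nu) + O(1/n)$, where $\pi$ ranges over one-step right-extensions of $\sigma$; this propagates convergence of consecutive-occurrence densities from size $2h+1$ to every smaller size.

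Combining these two inputs, part (i) is immediate: taking expectations gives $\Prb{(\bm{\nu}_n, \bm{i}_n) \in C_{h, \pi, h+1}} = \E[\widetilde{\coc}(\pi, \bm{\nu}_n)]$ for central cylinders and $o(1)$ for the rest, so the Portmanteau reformulation turns annealed convergence into convergence of $\E[\widetilde{\coc}(\pi, \bm{\nu}_n)]$ for every pattern $\pi$; conversely, the limit $\bm{\nu}_\infty$ exists by tightness on the compact space and is unique via the $\pi$-system of cylinders. For part (ii), the same identity rewrites the conditional cylinder probabilities as the random variables $\widetilde{\coc}(\pi, \bm{\nu}_n)$ themselves; I would then invoke the standard characterization of convergence in distribution of random probability measures on a Polish space, namely that $\bm{\mu}_n \convdis \bm{\mu}^\infty$ iff $(\int f_i \, d\bm{\mu}_n)_i \convdis (\int f_i \, d\bm{\mu}^\infty)_i$ jointly in distribution along some countable convergence-determining family of continuous bounded $f_i$. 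Taking the $f_i$ to be the clopen indicators $\One_{C_{h,\sigma,j}}$ yields the equivalence. The main obstacle is justifying this last characterization for random measures on our specific space, but it is standard machinery available since $\tilde{\SG_\bullet}$ is compact Polish with a countable clopen base, and has already been invoked in \cite{Borga2019}.
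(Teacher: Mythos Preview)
The paper does not give its own proof of this statement: it is quoted verbatim from \cite[Section~2.5]{Borga2019} and used as a black box. Your sketch is correct and is essentially the argument carried out in that reference: the compact Polish space $\tilde{\SG_\bullet}$ has a countable clopen basis given by the cylinders $C_{h,\sigma,j}$, the exact identity $\Prb{r_h(\nu,\bm i_n)=(\pi,h+1)\mid\nu}=\widetilde{\coc}(\pi,\nu)$ for $|\pi|=2h+1$ identifies conditional cylinder masses with consecutive-pattern densities, the non-central cylinders have mass $O(h/n)$, and the extension identity transfers convergence from odd pattern sizes to all sizes. The only point you leave soft is the characterization of convergence in distribution of random probability measures via joint convergence of their values on a determining $\pi$-system of clopen sets; this is indeed the standard result you cite (it appears in the present paper too, via \cite[Theorem~4.11]{kallenberg2017random}), and since clopen sets are automatically continuity sets for every measure, no boundary condition needs to be checked.
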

Since the variables $\widetilde{\coc}(\pi,\bm{\nu}_n)$ take values in $[0,1]$,
the quenched Benjamini--Schramm convergence implies the annealed one.

The goal of the following sections is to prove that
a sequence of uniform permutations in a substitution-closed class converges
in the quenched Benjamini--Schramm sense 
using the packed trees representing permutations.
To this end, we need to introduce a local topology for trees. 

\subsection{Local limits for decorated trees}
In this section we introduce a local topology for decorated trees with a distinguished {\em leaf}
(called pointed trees in the sequel).
This is a straight-forward adaptation
of that for trees with a distinguished {\em vertex}
introduced by Stufler in \cite{stufler2019}.

Following the presentation in \cite[Section 6.3.1]{stufler2016limits}, 
we start by defining an infinite pointed plane tree $\Uinf$ (see \cref{fig:Stufler_tree} below).
This infinite tree is meant to be a pointed analogue of Ulam--Harris tree, so that
pointed trees will be seen as subsets of it.
To construct $\Uinf$, we take a spine $(u_i)_{i\geq 0}$ that grows
downwards, that is, such that $u_i$ is the parent of $u_{i-1}$ for all $i\geq 1$. Any vertex $u_i,$ with $i\geq 1,$ has
an infinite number of children to the left and to the right  of its distinguished offspring $u_{i-1}$. 
The former are ordered from right to left and denoted by $(v^{i}_{L,j})_{j\geq1}$, the latter are ordered from left to right and denoted by $(v^{i}_{R,j})_{j\geq1}$. 
Each of these vertices not belonging to the spine $(u_i)_{i\geq 0}$ is the root of a copy of the Ulam--Harris tree $\mathcal{U}_{\infty}$. 
We always think of $\Uinf$ as a tree with distinguished leaf $u_0.$

\begin{figure}[htbp]
	\centering
	\includegraphics[height=7cm]{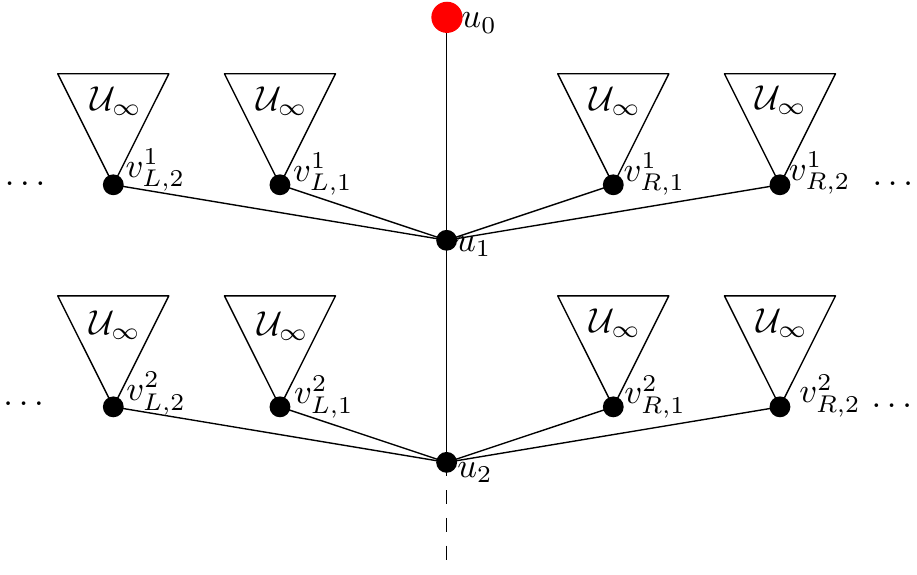}
	\caption{A schema of the infinite plane tree $\Uinf$.}
	\label{fig:Stufler_tree}
\end{figure}

\begin{definition}
	A \emph{(possibly infinite) pointed plane tree} $T^{\bullet}$ is a subset of $\Uinf$ such that
	\begin{itemize}
		\item $u_0\in T^{\bullet}.$
		\item if $u_p\in T^{\bullet}$ then $u_i\in T^{\bullet},$ for all $0\leq i \leq p.$
		\item if $v^{i}_{L,q}\in T^{\bullet}$ (resp. $v^{i}_{R,q}\in T^{\bullet}$) then $u^{i}\in T^{\bullet}$ and $v^{i}_{L,j}\in T^{\bullet}$ (resp. $v^{i}_{R,j}\in T^{\bullet}$) for all $1\leq j \leq q.$
		\item Any maximal subset of $T^{\bullet}$ contained in 
		one of the Ulam--Harris trees $\mathcal{U}_{\infty}$  of $\Uinf$ is a plane tree.
	\end{itemize}
	We denote with $\mathfrak{T}^\bullet$ \label{def:mfTb} the space of (possibly infinite) pointed plane trees.
\end{definition}
We say that a pointed tree $T^{\bullet}$ in $\mathfrak{T}^\bullet$ is {\em locally and upwards finite} if 
every vertex has finite degree and the intersection of $T^\bullet$ with any one of the Ulam--Harris 
trees $\mathcal{U}_{\infty}$  of $\Uinf$ is finite.
The set of locally and upwards finite pointed trees will be denoted by $\lufT$.\label{def:lufT}

Any finite plane tree $T$ together with a distinguished leaf $v_0$
may be interpreted in a canonical way as a pointed plane tree $T^\bullet$, such that $v_0$ is mapped to $u_0$.
In particular, the backward spine $u_0,u_1,\cdots$ of the associated pointed plane tree $T^\bullet$ is finite and ends at the root of $T$.
\medskip

Next, we need to extend this notion to decorated trees.
Let $\DDD$ be a combinatorial class.
We define $\DDD$-decorated locally and upwards finite pointed trees, 
as a tree $T^\bullet$ in $\lufT$, endowed with a decoration function
$\dec: \Vint({T}^\bullet) \to \DDD$, such that, for each $v$ in $\Vint(T^{\bullet})$, the outdegree of $v$ is exactly $\size(\dec(v))$.
We denote such a tree with the pair $(T^\bullet,\dec)$ and the space of such trees as $\lufTD$. \label{def:luftd}
As above, a decorated tree with a distinguished leaf can be identified with an element of this set.

Given a $\DDD$-decorated pointed tree $(T^{\bullet},\lambda_{T^{\bullet}})\in\lufTD$, we denote with $f_h^{\bullet}(T^{\bullet},\lambda_{T^{\bullet}})$ the $\DDD$-decorated pointed tree $(f_h^{\bullet}(T^{\bullet}),f_h^{\bullet}(\lambda_{T^{\bullet}})),$ where $f_h^{\bullet}(T^{\bullet})$ is the pointed fringe subtree rooted at $u_h$ with distinguished leaf $u_0$ (if $u_h$ is not well-defined because $u_h\notin T^\bullet$, we set $f_h^{\bullet}(T^\bullet)=f_m^{\bullet}(T^\bullet),$ where $m$ is the maximal index such that $u_m\in T^\bullet$) 
and $f_h^{\bullet}(\lambda_{T^{\bullet}})$ is $\lambda_{T^{\bullet}}$ restricted to the domain ${\Vint(f_h^{\bullet}(T^{\bullet}))}$.
We note that, for any given $h$, the image set $f_h^{\bullet}(\lufTD)$ is countable.

We endow the space $\lufTD$ with the \emph{local distance} $d_t$ defined, for all $(T^{\bullet}_1,\lambda_{T^{\bullet}_1}),(T^{\bullet}_2,\lambda_{T^{\bullet}_2})\in\lufTD$, by
\begin{equation}
\label{eq:local_dist_for_trees}
d_t\big((T^{\bullet}_1,\lambda_{T^{\bullet}_1}),(T^{\bullet}_2,\lambda_{T^{\bullet}_2})\big)=2^{-\sup\{h\geq0\;:\;f^\bullet_h(T^{\bullet}_1,\lambda_{T^{\bullet}_1})=f^\bullet_h(T^{\bullet}_2,\lambda_{T^{\bullet}_2})\}},
\end{equation}
with the classical conventions that $\sup\emptyset=0,$ $\sup\NN=+\infty$ and $2^{-\infty}=0$.

\begin{remark}
	The distance defined in \cref{eq:local_dist_for_trees} can be trivially restricted also to the space of non-decorated pointed trees. We point out that this distance is not equivalent to the distance considered in \cite[Section 6.3.1]{stufler2016limits} for the space of non-decorated pointed trees. For instance, if $S_n$, $1\leq n\leq \infty$ is a star where the root has outdegree $n$ and its children all have outdegree 0, then the sequence $(S_n)_{n\geq 1}$ does not converge for our metric (and has no convergent subsequences). This implies that our space is not compact. On the contrary, the space of pointed trees endowed with the distance defined by Stufler in \cite{stufler2016limits} is compact. 
	
	We also note (without proof since we do not need this result) that in the subspace of locally finite pointed trees the two distances are topologically equivalent. A proof of this result would be an easy adaptation of \cite[Lemma 6.2]{janson2012simply}.
\end{remark}

\begin{proposition}
	The space $(\lufTD,d_t)$ is a Polish space.
\end{proposition}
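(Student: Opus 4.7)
The plan is to verify separability and completeness of $(\lufTD,d_t)$ separately.

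For separability I would exhibit an explicit countable dense subset: the collection $\mathcal{F}$ of \emph{finite} $\DDD$-decorated pointed plane trees. Countability is immediate since $\DDD$ is a combinatorial class (finitely many decorations at each size) and finite plane trees with a distinguished leaf form a countable family. For density, fix $(T^\bullet,\lambda)\in\lufTD$ and $h\ge 1$. The truncation $f_h^\bullet(T^\bullet,\lambda)$ lies in $\mathcal{F}$: indeed, using that $T^\bullet$ is locally and upwards finite, the backward spine from $u_0$ to $u_h$ contains at most $h+1$ vertices, each of finite outdegree, and each Ulam--Harris subtree hanging off the spine is finite by the luf assumption. A direct check that $f_h^\bullet$ is idempotent yields $f_h^\bullet\bigl(f_h^\bullet(T^\bullet,\lambda)\bigr)=f_h^\bullet(T^\bullet,\lambda)$, so $d_t\bigl((T^\bullet,\lambda),f_h^\bullet(T^\bullet,\lambda)\bigr)\le 2^{-h}$, proving density.

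For completeness I would start with a Cauchy sequence $\bigl((T_n^\bullet,\lambda_n)\bigr)_{n\ge 1}$ in $\lufTD$. Since $f_h^\bullet(\lufTD)$ is countable and the Cauchy property implies that $f_h^\bullet(T_n^\bullet,\lambda_n)$ is eventually constant in $n$, one obtains for every $h\ge 0$ an index $N_h$ and a finite decorated pointed tree $(F_h^\bullet,\mu_h)$ with $f_h^\bullet(T_n^\bullet,\lambda_n)=(F_h^\bullet,\mu_h)$ for all $n\ge N_h$. Using $f_h^\bullet\circ f_{h+1}^\bullet=f_h^\bullet$, the family $(F_h^\bullet,\mu_h)_{h\ge 0}$ is consistent, which, after identifying each $F_h^\bullet$ with its canonical embedding in $\Uinf$, gives an increasing family of decorated subsets. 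I would then define the candidate limit
\[
(T_\infty^\bullet,\lambda_\infty)\;:=\;\Bigl(\bigcup_{h\ge 0}F_h^\bullet,\ \bigcup_{h\ge 0}\mu_h\Bigr).
\]
The four defining conditions of a pointed plane tree are inherited from the $F_h^\bullet$. Each vertex of $T_\infty^\bullet$ already lies in some $F_h^\bullet$ with the same outdegree as in $T_\infty^\bullet$ (vertices off the spine are stable once they enter some $F_h^\bullet$, and for spine vertices $u_i$ the children are all captured by $F_{i+1}^\bullet$), so $T_\infty^\bullet$ is locally finite; similarly each subtree of $T_\infty^\bullet$ contained in an Ulam--Harris copy coincides with its counterpart in some finite $F_h^\bullet$, hence is finite, giving $(T_\infty^\bullet,\lambda_\infty)\in\lufTD$. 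Finally, $f_h^\bullet(T_\infty^\bullet,\lambda_\infty)=(F_h^\bullet,\mu_h)$, so $d_t\bigl((T_n^\bullet,\lambda_n),(T_\infty^\bullet,\lambda_\infty)\bigr)\le 2^{-h}$ as soon as $n\ge N_h$, giving the desired convergence.

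The main subtlety I anticipate is the piecewise definition of $f_h^\bullet$ when $u_h\notin T^\bullet$: in that regime $f_h^\bullet$ falls back to $f_m^\bullet$ with $m$ maximal, and one must check that this does not break either the idempotency used in the density argument or the consistency of the family $(F_h^\bullet,\mu_h)$ used in the completeness argument. A brief case split resolves it: if the spine lengths of the Cauchy sequence are bounded, the $f_h^\bullet$'s coincide for all large $h$ and the sequence actually stabilises on a single finite limit; if they are unbounded, then for each $h$ the value $u_h$ is eventually present and the construction above proceeds with no modification.
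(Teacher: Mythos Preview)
Your argument is correct and follows essentially the same route as the paper: both use the truncations $f_h^\bullet$ to produce a countable dense set, and both reduce completeness to the fact that a tree in $\lufTD$ is determined by its consistent family of truncations. The only difference is in packaging: the paper proves completeness in one line by observing that the map $(T^\bullet,\lambda)\mapsto (f_h^\bullet(T^\bullet,\lambda))_{h\ge 1}$ embeds $\lufTD$ isometrically onto a closed subset of the countable product $\prod_{h\ge 1} f_h^\bullet(\lufTD)$ of discrete spaces, whereas you unroll this by explicitly building the limit as $\bigcup_h F_h^\bullet$ and verifying it lies in $\lufTD$; your careful case split on bounded versus unbounded spine lengths is exactly the check that the image of the embedding is closed.
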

\begin{proof}
	The separability is trivial since $\uplus_{h \ge 1} f_h^{\bullet}(\lufTD)$ is a countable dense set.
	The completeness follows from the fact that the space $\lufTD$ is a closed subspace of a countable product of discrete sets (which is complete) via the map $(T^{\bullet},\lambda_{T^{\bullet}})\to\big(f_h^\bullet(T^{\bullet},\lambda_{T^{\bullet}})\big)_{h\geq 1}.$
\end{proof}

We end this section defining two versions 
of the local convergence (similar to those previously defined for permutations) for random decorated trees 
with a uniform random distinguished leaf.
In both definition,
$(\bm{T}_n,\bm{\lambda}_n)_{n\in\mathbb{N}}$ is a sequence of random (finite) $\DDD$-decorated trees
and $\bm \ell_n$ is a uniform random leaf of $(\bm{T}_n,\bm{\lambda}_n)$.

\begin{definition}[Annealed Benjamini--Schramm convergence]
	We say that $(\bm{T}_n,\bm{\lambda}_n)_{n\in\mathbb{N}}$ converges in the annealed Benjamini--Schramm sense to a random variable $(\bm{T}^{\bullet}_\infty,\bm{\lambda}_\infty)$
	with values in $\lufTD$ if the sequence of random pointed $\DDD$-decorated trees $((\bm{T}^{\bullet}_n,\bm{\lambda}_n),\bm \ell_n)_{n\in\mathbb{N}}$ converges in distribution to $(\bm{T}^{\bullet}_\infty,\bm{\lambda}_\infty)$
	with respect to the local distance defined in \cref{eq:local_dist_for_trees}.
\end{definition}

\begin{definition}[Quenched Benjamini--Schramm convergence]
	We say that $(\bm{T}_n,\bm{\lambda}_n)_{n\in\mathbb{N}}$ converges in the quenched Benjamini--Schramm sense to a random measure $\bm{\mu}^{\bullet}_{\infty}$
	on $\lufTD$ if the sequence of conditional probability distributions  $\mathcal{L}\big(((\bm{T}_n,\bm{\lambda}_n),\bm \ell_n)\big|(\bm{T}_n,\bm{\lambda}_n)\big)_{n\in\mathbb{N}}$ converges in distribution to $\bm{\mu}^{\bullet}_\infty$
	with respect to the weak topology induced by the local distance defined in \cref{eq:local_dist_for_trees}.
\end{definition}
Again, the quenched version is stronger than the annealed one.

\begin{remark}
	It would also be natural, and closer to the usual notion of Benjamini--Schramm convergence
	in the literature,
	to distinguish a uniform random vertex $\bm v_n$
	rather than a uniform random leaf $\bm \ell_n$ as above.
	The leaf version is however what we need here for our application to permutations.
\end{remark}

\subsection{Local convergence around a uniform leaf for random packed trees conditioned to the number of leaves}
\label{sec:limiting_local_trees}
We begin this section by constructing the limiting random pointed packed tree $\bm{P}^\bullet_\infty=(\bm{T}^\bullet_\infty,\bm{\lambda}_\infty)$. 
This tree will be the limit of the sequence of uniform packed trees $(\bm{T}_n,\bm{\lambda}_{\bm{T}_n})$ considered in \cref{lem:unif_packed_tree} pointed at a random leaf.

We recall that $\xi$ denotes the random variable defined in \cref{eq:offspring_distribution_packed_tree} and $\bm{T}$ denotes the associated $\xi$-Galton--Watson tree.
Additionally, we recall that the random variable $\hat{\xi}$ defined in \cref{eq:xihat} is the \emph{size-biased} version of $\xi$.

We define the random tree $\bm{T}^\bullet_\infty$ in the space $\lufT$ as follows. Let $u_0$ be the distinguished leaf. For each $i\geq 1$, we let $u_i$ receive offspring according to an independent copy of $\hat{\xi}$. 
The vertex $u_{i-1}$ gets identified with an offspring of $u_i$ chosen uniformly at random. All other offspring vertices of $u_i$ become roots of independent copies of the Galton--Watson tree $\bm{T}$. 

Conditionally on $\bm{T}^\bullet_\infty,$ the random decoration $\bm{\lambda}_{\infty}(v)$ of each internal vertex $v$ of $\bm{T}^\bullet_\infty$ gets drawn uniformly at random among all $d^+_{\bm{T}^\bullet_\infty}(v)$-sized decorations in $\widehat{\GGG(\mathcal{S})}$ independently of all the other decorations ($\widehat{\GGG(\mathcal{S})}$ was introduced after \cref{def:S_plus_decoration}). This construction yields a random infinite locally and upwards finite pointed packed tree.

We refer to the sequence of (decorated) vertices $(u_i)_{i \geq 0}$ as the \emph{infinite spine} of $\bm{P}^\bullet_\infty=(\bm{T}^\bullet_\infty,\bm{\lambda}_\infty)$.

\bigskip

To simplify notation, we denote the space $\mathfrak{T}^{\bullet,\text{\tiny luf}}_{\widehat{\GGG(\SSS)}}$ of (possibly infinite) locally and upwards finite pointed packed trees as $\lufPT$. \label{def:pluft}

\begin{proposition}
	\label{prop:quenched_conv_tree}
	Let $\bm{P}_n=(\bm{T}_n,\bm{\lambda}_{\bm{T}_n})$ be the random packed tree considered in \cref{lem:unif_packed_tree} and  $\bm{P}^\bullet_\infty=(\bm{T}^\bullet_\infty,\bm{\lambda}_{\infty})$ be the limiting random pointed packed tree constructed above. It holds that
	\begin{equation}
	\label{eq:random_meas_conv}
	\mathcal{L}\big((\bm{P}_n,\bm{\ell}_n)|\bm{P}_n\big)\stackrel{P}{\longrightarrow}\mathcal{L}(\bm{P}^\bullet_\infty),
	\end{equation}
	where $\bm{\ell}_n$ is a uniform leaf of $\bm{P}_n$ chosen independently of $\bm{P}_n.$
	
	In particular, $\bm{P}_n$ converges in the quenched Benjamini--Schramm sense 
	to the deterministic measure $\mathcal{L}(\bm{P}^\bullet_\infty)$
	and in the annealed Benjamini--Schramm sense to the random tree $\bm{P}^\bullet_\infty.$  
\end{proposition}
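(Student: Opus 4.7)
The plan is to reduce the quenched convergence to a law-of-large-numbers statement for local neighbourhoods around leaves, and then to invoke standard fringe-subtree asymptotics for size-conditioned Galton--Watson trees. Since the weak topology on probability measures over $\lufPT$ is metrisable and the local topology $d_t$ is generated by the countable family of cylinder events $\{T^\bullet : f_h^\bullet(T^\bullet) = P^*\}$, with $h \ge 0$ and $P^* \in f_h^\bullet(\lufPT)$, the convergence in probability \eqref{eq:random_meas_conv} is equivalent to
\[
\tfrac{1}{n}\,\#\{\ell \text{ leaf of } \bm{P}_n : f_h^\bullet(\bm{P}_n,\ell) = P^*\} \convp \Proba\!\big(f_h^\bullet(\bm{P}^\bullet_\infty) = P^*\big)
\]
for every fixed $h$ and $P^*$. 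I would first reduce to the undecorated case: conditionally on $\bm{T}_n$, the decorations $\bm{\lambda}_{\bm{T}_n}$ are independent and uniform among the $\widehat{\GGG(\SSS)}$-objects of matching size (\cref{lem:unif_packed_tree}), and the same holds for $\bm{\lambda}_\infty$ in $\bm{P}^\bullet_\infty$; so it suffices to prove the corresponding statement for the plane tree shapes, and then multiply by the (deterministic) probability that the decorations on the internal vertices of $P^*$ take the required values.

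Second, I would identify the left-hand side as a count of \emph{extended fringe subtrees carrying a marked leaf}. The event $f_h^\bullet((\bm{T}_n,\bm{\lambda}_{\bm{T}_n}),\ell) = P^*$ depends only on the ancestor $u_h$ of $\ell$ at height $h$ in $\bm{T}_n$ (or the root if $\ell$ lies at height below $h$), on the fringe subtree rooted at $u_h$, and on which leaf of that fringe subtree is $\ell$. In other words, the event can be detected on the extended fringe subtree at $\ell$ of height $h$ in the sense of~\cite{stufler2016limits,stufler2019}. Summing over leaves $\ell$ thus turns the left-hand side into a normalised count of such substructures in~$\bm{T}_n$.

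Third, I would invoke concentration of extended fringe subtree counts. By \cref{subsec:PackedTrees_GW}, $\bm{T}_n$ is distributed as the critical aperiodic $\xi$-Galton--Watson tree $\bm T$ conditioned on $|\bm T|_\Omega = n$ with $\Omega = \{0\}$; in this setting, the results of~\cite{janson2012simply,holmgren2017,stufler2019,stufler2019offspring} (see also~\cite{MR3335013} for the conditioning on the number of leaves) give that for any fixed extended fringe shape $P^*$, the corresponding count divided by $n$ converges in probability to a deterministic limit. That limit is the probability that the local neighbourhood of a uniform random leaf in Kesten's size-biased version of~$\bm T$ equals $P^*$. A direct inspection shows that this size-biased object is exactly~$\bm{T}^\bullet_\infty$: along the infinite backward spine the offspring distribution is the size-biased $\hat{\xi}$, the distinguished child at each level is uniform among the siblings, and the non-spine offspring are roots of independent copies of $\bm T$. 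Re-introducing the uniform independent decorations then gives~$\bm{P}^\bullet_\infty$, which completes the reduction.

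The main obstacle is the third step: the classical fringe-subtree theorems are usually stated for $\xi$-Galton--Watson trees conditioned on the \emph{total} number of vertices and marked at a uniform vertex, while here we condition on the number of leaves and mark a uniform leaf. I would handle this either by quoting a version tailored to general $\Omega$ (as in~\cite{stufler2019offspring,MR3335013}), or, more self-containedly, by extracting it from \cref{le:semilocal} with $k = 1$ and $\Omega = \{0\}$: that lemma already describes the joint distribution of the shape of an $o(\sqrt{n})$-neighbourhood of the marked leaf together with its distance to the root, and the extended fringe around the leaf sits inside this neighbourhood once $h$ is fixed and $n$ is large. A short first-/second-moment argument on the number of leaves $\ell$ with prescribed local type (using that the semi-local limit around two i.i.d.\ uniform leaves asymptotically consists of two independent copies of $\bm{T}^\bullet_\infty$, since the closest common ancestor is at distance of order $\sqrt{n}$) then upgrades the marginal convergence into convergence in probability, which is what the quenched statement requires. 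The final ``in particular'' assertions are immediate: convergence in probability of the conditional laws to a deterministic measure yields quenched Benjamini--Schramm convergence to $\mathcal{L}(\bm{P}^\bullet_\infty)$, and integrating out $\bm{P}_n$ yields the annealed convergence to the random tree~$\bm{P}^\bullet_\infty$.
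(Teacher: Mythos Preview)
Your proposal is correct and follows essentially the same route as the paper: reduce to cylinder events, reindex the leaf count as a standard fringe-subtree count at the $h$-th ancestor (the paper makes this bijection explicit), invoke a fringe concentration result (the paper cites \cite[Rem.~1.9]{stufler2019offspring}), and then handle the decorations separately. One small point the paper makes explicit that you gloss over: passing from the undecorated count to the decorated one is not literally ``multiply by a deterministic constant'' --- you need that the relevant fringe copies are pairwise disjoint (so their decorations are conditionally independent given $\bm{T}_n$) and then a Chernoff-type bound; incidentally, your alternative second-moment route via \cref{le:semilocal} with $k=1,2$ is exactly what the paper uses later for the separable case in \cref{prop:quenched_conv_tree_Plus_Signs}.
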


Note that the $\mathcal{L}(\bm{P}^\bullet_\infty)$ is a measure on $\lufPT$.
Since the limiting object in quenched Benjamini--Schramm convergence is in general a random measure on $\lufPT$,
it should be interpreted as a constant random variable, 
equal to the measure $\mathcal{L}(\bm{P}^\bullet_\infty)$.

\begin{proof}[Proof of \cref{prop:quenched_conv_tree}]
	The sequence	$\mathcal{L}\big((\bm{P}_n,\bm{\ell}_n)|\bm{P}_n\big)_{n\in\NN}$ is a sequence of random probability measures on the Polish space $(\lufPT,d_t).$ 
	The set of closed and open balls
	\begin{equation*}
	\label{convergece_determ}
	\mathcal{B}=\Big\{B\big((T^\bullet,\lambda_{T^\bullet}),2^{-h}\big):h\in\NN,(T^\bullet,\lambda_{T^\bullet})\in\lufPT\Big\}
	\end{equation*}
	is a convergence-determining class for the space $(\lufPT,d_t)$, 
	\emph{i.e.,} for every probability measure $\mu$ and every sequence of probability measures $(\mu_n)_{n\in\NN}$ on $\lufPT$, 
	the convergence $\mu_n(B)\to\mu(B)$ for all $B\in\mathcal{B}$ implies $\mu_n\to\mu$ w.r.t. the weak-topology. This is a trivial consequence of the monotone class theorem and the fact that the intersection of two balls in $\lufPT$ is either empty or one of them.
	
	Therefore, using \cite[Theorem 4.11]{kallenberg2017random}, the convergence in \cref{eq:random_meas_conv} is equivalent
	to the following convergence, for all $k\in\NN$ and for all vectors of balls $(B_i)_{1\leq i\leq k}\in\mathcal{B}^k$:
	\begin{equation*}
	\Big(\mathcal{L}\big((\bm{P}_n,\bm{\ell}_n)|\bm{P}_n\big)(B_i)\Big)_{1\leq i\leq k}\stackrel{d}{\longrightarrow}\Big(\mathcal{L}(\bm{P}^\bullet_\infty)(B_i)\Big)_{1\leq i\leq k}.
	\end{equation*}
	Since the limiting vector in the above equation is deterministic, the above convergence in distribution is equivalent to the convergence in probability. Finally, standard properties of the convergence in probability imply that
	it is enough to show the component-wise convergence, \emph{i.e.,} for all $B\in\mathcal{B}$, 
	\begin{equation}
	\label{eq:goaloftheproof}
	\mathcal{L}\big((\bm{P}_n,\bm{\ell}_n)|\bm{P}_n\big)(B)\stackrel{P}{\longrightarrow}\mathcal{L}(\bm{P}^\bullet_\infty)(B).
	\end{equation}
	
	Fix a ball $B=B\big((T^\bullet,\lambda_{T^\bullet}),2^{-h}\big)\in\mathcal{B}$ and note that \cref{eq:goaloftheproof} 
	(which we need to prove) rewrites as
	\begin{equation}
	\label{eq:goal1oftheproof}
	\Proba\big(f_h^{\bullet}(\bm{P}_n,\bm{\ell}_n)=f_h^{\bullet}(T^\bullet,\lambda_{T^\bullet})\big|\bm{P}_n\big)\stackrel{P}{\longrightarrow}\Proba\big(f^\bullet_h(\bm{P}^\bullet_\infty)=f^\bullet_h(T^\bullet,\lambda_{T^\bullet})\big).
	\end{equation}
	(The left-hand side is a function of $\bm{P}_n$, and hence, a random variable; the right-hand side is a number.)
	
	W.l.o.g. we can assume that $f_h^{\bullet}(T^\bullet,\lambda_{T^\bullet})=(T^\bullet,\lambda_{T^\bullet})$. Denoting $\mathfrak{L}(\bm{P}_n)$ the set of leaves of $\bm{P}_n$,
	the left-hand side writes
	\begin{equation}
	\begin{split}
	\Proba\big(f_h^{\bullet}(\bm{P}_n,\bm{\ell}_n)=(T^\bullet,\lambda_{T^\bullet})\big|\bm{P}_n\big)&=\frac{\big|\big\{\ell\in \mathfrak{L}(\bm{P}_n):f_h^{\bullet}(\bm{P}_n,\ell)=(T^\bullet,\lambda_{T^\bullet})\big\}\big|}{n}\\
	&=\frac{1}{n}\sum_{\ell\in \mathfrak{L}(\bm{P}_n)}\mathds{1}_{\{f_h^{\bullet}(\bm{P}_n,\ell)=(T^\bullet,\lambda_{T^\bullet})\}}\\
	&=\frac{1}{n}\sum_{\ell\in \mathfrak{L}(\bm{T}_n)}\mathds{1}_{\{f_h^{\bullet}(\bm{T}_n,\ell)=T^\bullet\}}\mathds{1}_{\{\bm{\lambda}_{f_h^{\bullet}(\bm{T}_n,\ell)}=\lambda_{T^\bullet}\}}.
	\end{split}
	\end{equation}
	For a vertex $v$ of $\bm{T}_n$, we denote by $f(\bm{T}_n,v)$ the fringe subtree rooted at $v$
	and by $f(\bm{\lambda}_{(\bm{T}_n,v)})$ the map $\lambda_{|_{\Vint(f(\bm{T}_n,v))}}$.
	Let also $T$ be the unpointed version of $T^{\bullet}$.
	Note that a leaf $\ell\in \mathfrak{L}(\bm{T}_n)$ satisfies $f_h^{\bullet}(\bm{T}_n,\ell)=T^\bullet$
	if and only if its $h$-th ancestor $v$ satisfies $f(\bm{T}_n,v)=T$.
	Additional, to any $v$ with $f(\bm{T}_n,v)=T$ corresponds exactly one leaf $\ell$ with $f_h^{\bullet}(\bm{T}_n,\ell)=T^\bullet$
	(which is determined by the pointing).
	Therefore we can rewrite the last term of the above equation as
	\begin{equation}
	\frac{1}{n}\sum_{v\in \bm{T}_n}\mathds{1}_{\{f(\bm{T}_n,v)=T\}}\mathds{1}_{\{f(\bm{\lambda}_{(\bm{T}_n,v)})=\lambda_{T}\}}.
	\end{equation}
	By \cite[Rem. 1.9]{stufler2019offspring}, we have that 
	$$\frac{1}{n}\sum_{v\in \bm{T}_n}\mathds{1}_{\{f(\bm{T}_n,v)=T\}}\stackrel{P}{\longrightarrow}\Proba\big(f^\bullet_h(\bm{T}^\bullet_\infty)=f^\bullet_h(T^\bullet)\big).$$
	
	Noting that all fringe subtrees of $\bm{T}_n$ that are equal to $T$ are necessarily disjoint and that, conditioning on $f(\bm{T}_n,v)=T,$ then $f(\bm{\lambda}_{(\bm{T}_n,v)})=\lambda_{T}$ with probability $p,$ independently from the rest 
	(specifically $p = \prod_{u \in T} q_{d^+_T(u)}^{-1}$), we can conclude using Chernoff concentration bounds that 
	\begin{multline*}
	\frac{1}{n}\sum_{v\in \bm{T}_n}\mathds{1}_{\{f(\bm{T}_n,v)=T\}}\mathds{1}_{\{f(\bm{\lambda}_{(\bm{T}_n,v)})=\lambda_{T}\}}\\
	\stackrel{P}{\longrightarrow}p\cdot\Proba\big(f^\bullet_h(\bm{T}^\bullet_\infty)=f^\bullet_h(T^\bullet)\big)=\Proba\big(f^\bullet_h(\bm{P}^\bullet_\infty)=f^\bullet_h(T^\bullet,\lambda_{T^\bullet})\big),
	\end{multline*}
	where the last equality follows from the construction of the map $\bm{\lambda}_{\infty}.$   
\end{proof}

\subsection{The continuity of the bijection between packed trees and $\oplus$-indecomposable permutations}
\label{sec:continuity_bij}

In this section we consider a substitution-closed class $\mathcal{C}$ different from the class of separable permutations. 
The latter case will be considered separately in \cref{sec:map_for_sep_perm}.
We recall that $\DT:=\Pack\circ\CanTree$ is the bijection presented in \cref{le:bij_perm_tree} between $\oplus$-indecomposable permutations of $\mathcal{C}$ and finite packed trees. 

The goal of this section is to extend the bijection $\DT^{-1}$ as a function $\RP$ from the metric space of (possibly infinite) locally and upwards finite pointed packed trees $(\lufPT,d_t)$ to the metric space of (possibly infinite) rooted permutations $(\tilde{\SG}_{\bullet},d_p)$.

First, we need to deal with the introduction of a root in permutations (resp. a pointed leaf in trees) on finite objects. 
This is very simple, and we extend $\DT^{-1}$ as a function $\RP$ 
from finite pointed packed trees to finite rooted permutations as follows. 
We recall (see \cref{rk:Leaves_Elements})
that the $i$-th leaf $\ell$ of a packed tree $P=\DT(\nu)$ corresponds to
the $i$-th element of the permutation $\nu$.
Therefore the following definition is natural:
\begin{equation}
\label{eq:G_extension_1}
\RP(\PackedTree,\ell):=(\DT^{-1}(\PackedTree),i).
\end{equation} 

Given an infinite pointed packed tree $\PackedTree^{\bullet}$ with infinitely many $\cS$-gadget decorations on its infinite spine, we consider the sequence of pointed subtrees 
$$\big(f^{\bullet}_{s(h)}(\PackedTree^{\bullet})\big)_{h\in\mathbb{N}}$$
consisting of all restrictions for $s(h)\in\mathbb{N}$ such that $f^{\bullet}_{s(h)}(\PackedTree^{\bullet})$ has root decorated with an $\cS$-gadget.

\begin{lemma}
	\label{lem:limexistence}
	Let $\PackedTree^{\bullet}$ be an infinite pointed packed tree.
	Then the (deterministic) sequence of rooted permutations $\big(\RP(f^{\bullet}_{s(h)}(\PackedTree^{\bullet}))\big)_{h\in\mathbb{N}}$ converges
	in the Benjamini--Schramm sense, as $h$ tends to $+\infty$.
\end{lemma}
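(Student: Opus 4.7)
The plan is to show that for every fixed $k$, the restriction $r_k\bigl(\RP(f_{s(h)}^{\bullet}(\PackedTree^{\bullet}))\bigr)$ is eventually constant as $h$ grows, which by definition of $d_p$ (see \eqref{distance}) yields the announced convergence. Fix $k$. Using that $\PackedTree^{\bullet}$ is locally and upwards finite, I would choose a non-negative integer $m$ large enough that the subtree of $\PackedTree^{\bullet}$ rooted at $u_m$ contains the $k$ leaves immediately preceding $u_0$ and the $k$ leaves immediately following $u_0$ in the depth-first left-to-right order. Then, using the hypothesis that $\PackedTree^{\bullet}$ has infinitely many $\cS$-gadget decorations on its spine, I would pick the smallest index $s^* \geq m$ such that $u_{s^*}$ is decorated by an $\cS$-gadget.

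The claim is then that $r_k\bigl(\RP(f_{s(h)}^{\bullet}(\PackedTree^{\bullet}))\bigr)$ is the same for all $h$ large enough so that $s(h) \geq s^*$. The justification invokes the pattern-reading rules from \cref{ssec:patterns_subtrees}: the pattern $r_k$ is determined, for each pair among the $2k+1$ leaves surrounding $u_0$, by the location of their closest common ancestor $u$ (which lies inside the subtree rooted at $u_m$ by choice of $m$, hence is independent of $h$) and, when $u$ carries a $\circledast$ decoration, by the parity of the distance to its closest $\cS$-gadget ancestor. That $\cS$-gadget ancestor is either located inside the subtree rooted at $u_m$ (again independent of $h$), or else it lies strictly above $u_m$ on the spine: in the latter case, by minimality of $s^*$, it must be $u_{s^*}$, and the corresponding distance depends only on $s^* - m$ and on the structure of the subtree rooted at $u_m$, hence is independent of $h$.

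The main obstacle is the potential sign flip coming from the $\oplus/\ominus$ alternation at $\circledast$-decorated nodes. On a generic finite packed tree, if a $\circledast$-decorated common ancestor $u$ has no $\cS$-gadget ancestor at all, the sign is resolved by the parity of the distance from $u$ to the root of the tree, which is a rule that is not stable under enlarging the truncation. The whole purpose of restricting to the subsequence $f_{s(h)}^{\bullet}$ (with root forced to be an $\cS$-gadget), combined with the assumption that the spine contains infinitely many such $\cS$-gadgets, is precisely to guarantee that for $h$ large the ancestor $u_{s^*}$ is always present in the truncation; the unstable ``distance to the root'' rule is therefore never used, and the relevant combinatorial data stabilize.
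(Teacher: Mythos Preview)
Your proof is correct and rests on the same key observation as the paper's: the pattern induced by a set of leaves in a packed tree is already determined by any fringe subtree containing those leaves and rooted at a vertex carrying an $\cS$-gadget (as discussed in \cref{ssec:patterns_subtrees}). The paper uses this to show the stronger \emph{consistency} statement that $\RP(f^{\bullet}_{s(h)}(\PackedTree^{\bullet}))$ is literally a restriction $r_{k(h)}$ of $\RP(f^{\bullet}_{s(h+1)}(\PackedTree^{\bullet}))$, and then invokes \cite[Proposition~2.12]{Borga2019} to conclude. You instead fix $k$ and argue directly that $r_k$ of the sequence stabilizes, which gives convergence in $d_p$ without the external citation. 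Both routes are short; yours is more self-contained, while the paper's identifies the consistency structure explicitly, which is conceptually a bit cleaner. One minor point: your choice of $m$ tacitly assumes there are at least $k$ leaves on each side of $u_0$ in $\PackedTree^{\bullet}$, which need not hold in general; the fix is trivial (take $m$ large enough to capture all leaves on a side if there are fewer than $k$), and the rest of the argument goes through unchanged.
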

\begin{proof}
	In \cref{ssec:patterns_subtrees},
	we saw that the pattern associated to a set $I$
	of leaves of a packed tree only depends
	on any fringe subtree 
	containing all leaves in $I$
	and rooted at a vertex decorated with an $\cS$-gadget.
	This implies that the family $\big(\RP(f^{\bullet}_{s(h)}(\PackedTree^{\bullet}))\big)_{h\in\mathbb{N}}$ of elements in $\Sr$ is \emph{consistent}, \emph{i.e.,}
	for all $h\in\mathbb{N}$, there exists an integer $k(h)$ (the half-width of the restriction strip) such that $r_{k(h)}(\RP(f^{\bullet}_{s(h+1)}(\PackedTree^{\bullet})))=\RP(f^{\bullet}_{s(h)}(\PackedTree^{\bullet}))$.
	By \cite[Proposition 2.12]{Borga2019}, this implies the existence of a limit,
	which is what we wanted to prove.
\end{proof}
This lemma allows to define, for an infinite pointed packed tree $\PackedTree^{\bullet}$ having infinitely many $\cS$-gadget decorations on its infinite spine,
\begin{equation}
\label{eq:G_extension_2}
\RP(\PackedTree^{\bullet}):=\lim_{h\to\infty}\RP(f^{\bullet}_{s(h)}(\PackedTree^{\bullet})).
\end{equation}

We now investigate the continuity of the function $\RP$ with respect to the local topologies. Note that $\RP$ is defined only for finite pointed packed trees and infinite pointed packed tree with infinitely many $\cS$-gadget decorations on the infinite spine. This will not be an issue: indeed, we will use the map on a sequence of random pointed packed trees that converges to $\bm{P}^\bullet_\infty$ and this limiting pointed packed tree has a.s.\ infinitely many $\cS$-gadget decorations on the infinite spine. We start with a definition and a lemma which characterize a certain regularity property of the map $\RP$. 

\begin{definition}
	Given a finite plane tree $T$ we say that a leaf $\ell_1$ is \emph{before} (resp. \emph{after}) a leaf $\ell_2$ if the
	post-order label of $\ell_1$ (see \cref{rk:Leaves_Elements}) is smaller (resp. greater) than the post-order label of $\ell_2$.
\end{definition}

\begin{lemma}
	\label{lem:local_reg}
	Fix $k \ge 0$.
	Let $\PackedTree^{\bullet}_1,\PackedTree^{\bullet}_2\in\lufPT$ be two pointed packed trees  such that for some $h>0$ it holds that:
	\begin{itemize}
		\item $f_h^{\bullet}(\PackedTree^{\bullet}_1)=f_h^{\bullet}(\PackedTree^{\bullet}_2)$;
		\item $f_h^{\bullet}(\PackedTree^{\bullet}_1)$ contains at least $k$ leaves before and $k$ leaves after the distinguished leaf;
		\item the root of $f_h^{\bullet}(\PackedTree^{\bullet}_1)$ is decorated with an $\cS$-gadget.
	\end{itemize} 
	Then $d_p(\RP(\PackedTree^{\bullet}_1),\RP(\PackedTree^{\bullet}_2))\leq 2^{-k}.$
\end{lemma}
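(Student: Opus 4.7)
The plan is to read out the restriction $r_k$ of each permutation $\RP(\PackedTree_i^{\bullet})$ directly from the common fringe subtree $f_h^{\bullet}(\PackedTree_1^{\bullet})=f_h^{\bullet}(\PackedTree_2^{\bullet})$, using the pattern-reading recipe in \cref{ssec:patterns_subtrees}. The main point is that, because the root of this fringe subtree carries an $\cS$-gadget decoration, the pattern induced by any set of leaves lying in the fringe subtree can be reconstructed without any knowledge of the tree outside the fringe.

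More concretely, let $\ell_{-k}, \ldots, \ell_{-1}, u_0, \ell_1, \ldots, \ell_k$ be the $k$ leaves immediately before and after the distinguished leaf $u_0$ in the post-order of $f_h^{\bullet}(\PackedTree_i^{\bullet})$. These leaves exist thanks to the second hypothesis, and since a fringe subtree captures a contiguous block of leaves (in post-order) of the ambient tree, they coincide with the $k$ immediate predecessors and successors of $u_0$ in $\PackedTree_i^{\bullet}$. By the identification between leaves and positions recalled in \cref{rk:Leaves_Elements}, they correspond to the entries at positions $-k, \ldots, k$ of the rooted permutation $\RP(\PackedTree_i^{\bullet})$. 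In particular, the support satisfies $A_i \cap [-k,k] = [-k,k]$ for both $i=1,2$, so the underlying index sets appearing in the definition of $r_k$ agree.

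It remains to check that the restricted total orders agree. By the discussion at the end of \cref{ssec:patterns_subtrees}, the pattern induced by a set of leaves in a packed tree is determined by any fringe subtree containing these leaves and rooted at a vertex decorated with an $\cS$-gadget. Applied to our $2k+1$ leaves and the fringe subtree $f_h^{\bullet}(\PackedTree_i^{\bullet})$, whose root has $\cS$-gadget decoration by the third hypothesis, this shows that the induced pattern is a function of $f_h^{\bullet}(\PackedTree_i^{\bullet})$ alone. Since the two fringe subtrees coincide, the patterns agree, hence $r_k(\RP(\PackedTree_1^{\bullet}))=r_k(\RP(\PackedTree_2^{\bullet}))$, and the required bound $d_p(\RP(\PackedTree_1^{\bullet}),\RP(\PackedTree_2^{\bullet}))\le 2^{-k}$ follows from the definition of $d_p$ in \eqref{distance}. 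Finally, in the infinite case, the equality above is consistent with the limit definition \eqref{eq:G_extension_2} because the restriction $r_k$ stabilises along the defining sequence $\bigl(\RP(f_{s(h')}^{\bullet}(\PackedTree_i^{\bullet}))\bigr)_{h'}$ as soon as $s(h')\ge h$, which is exactly where the $\cS$-gadget hypothesis plays its role.
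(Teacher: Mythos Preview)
Your proof is correct and follows essentially the same approach as the paper: both arguments reduce to the observation that $r_k(\RP(\PackedTree^{\bullet}_i))=r_k(\RP(f_h^{\bullet}(\PackedTree^{\bullet}_i)))$, which is justified by the pattern-reading discussion in \cref{ssec:patterns_subtrees} together with the $\cS$-gadget hypothesis on the root of the fringe subtree. Your version is more explicit (in particular spelling out why the relevant leaves in the fringe subtree coincide with those in the ambient tree, and addressing the infinite case), but the core argument is identical.
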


\begin{proof}
	Since $f_h^{\bullet}(\PackedTree^{\bullet}_1)$ contains at least $k$ leaves before and $k$ leaves after the distinguished leaf, and its root is decorated with an $\cS$-gadget, the restriction $r_k(\RP(\PackedTree^{\bullet}_1))$ is equal to the restriction $r_k(\RP(f_h^{\bullet}(\PackedTree^{\bullet}_1)))$.
	Of course, the same holds for $\PackedTree^{\bullet}_2$.
	This follows from the discussion of \cref{ssec:patterns_subtrees}.
	
	Since $f_h^{\bullet}(\PackedTree^{\bullet}_1)=f_h^{\bullet}(\PackedTree^{\bullet}_2)$, it then follows that $r_k(\RP(\PackedTree^{\bullet}_1))=r_k(\RP(\PackedTree^{\bullet}_2))$. So, by definition of~$d_p,$ we conclude that $d_p(\RP(\PackedTree^{\bullet}_1),\RP(\PackedTree^{\bullet}_2))\leq 2^{-k}.$ 
\end{proof}

We now set
\begin{equation}
\begin{split}
C_{\RP}:=\big\{&\PackedTree^{\bullet}\in\lufPT:\forall k>0,\;\exists h(k)>0\text{ s.t. }f^{\bullet}_{h(k)}(\PackedTree^{\bullet})\text{ contains at least } k \text{ leaves before and}\\
&\text{$k$ leaves after the distinguished leaf, and has a root decorated with an $\cS$-gadget}\big\}.
\end{split}
\end{equation}

\begin{proposition}
	\label{prop:continuity_RP}
	$\RP:(\lufPT,d_t)\to(\tilde{\SG}_{\bullet},d_p)$ is continuous on $C_{\RP}$.
\end{proposition}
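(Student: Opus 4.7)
The proof plan is essentially a direct application of Lemma~\ref{lem:local_reg}, combined with the defining property of the set $C_{\RP}$.

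Fix a pointed packed tree $\PackedTree^\bullet \in C_{\RP}$ and some $\epsilon > 0$. To establish continuity of $\RP$ at $\PackedTree^\bullet$, I would pick an integer $k \ge 1$ large enough so that $2^{-k} \le \epsilon$. By the definition of $C_{\RP}$, there exists an integer $h := h(k)$ such that the pointed fringe subtree $f^\bullet_h(\PackedTree^\bullet)$
\begin{itemize}
    \item contains at least $k$ leaves before and $k$ leaves after the distinguished leaf, and
    \item has a root decorated with an $\cS$-gadget.
\end{itemize}
I would then set $\delta := 2^{-h}$.

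Next, I would take any $\PackedTree^\bullet_2 \in \lufPT$ in the domain of $\RP$ (\emph{i.e.}\ either finite, or infinite with infinitely many $\cS$-gadgets on its spine) satisfying $d_t(\PackedTree^\bullet, \PackedTree^\bullet_2) < \delta$. By the definition of the local distance $d_t$ in~\eqref{eq:local_dist_for_trees}, this forces
\[
f^\bullet_h(\PackedTree^\bullet) = f^\bullet_h(\PackedTree^\bullet_2).
\]
In particular, the two conditions listed above transfer verbatim to $f^\bullet_h(\PackedTree^\bullet_2)$, so the three hypotheses of Lemma~\ref{lem:local_reg} are met (with $\PackedTree^\bullet_1 = \PackedTree^\bullet$). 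Applying that lemma yields
\[
d_p\bigl(\RP(\PackedTree^\bullet),\RP(\PackedTree^\bullet_2)\bigr) \le 2^{-k} \le \epsilon,
\]
which proves the continuity of $\RP$ at $\PackedTree^\bullet$.

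There is no real obstacle here: the core geometric content (that the pattern around the root is determined by any fringe subtree rooted at an $\cS$-gadget which is large enough around the distinguished leaf) has already been packaged into Lemma~\ref{lem:local_reg}, and the role of $C_{\RP}$ is precisely to guarantee that one can find such a fringe subtree at an arbitrarily fine scale $k$. The only point requiring minor care is to observe that $\RP(\PackedTree^\bullet_2)$ is well-defined under our hypotheses — which is automatic since we restrict attention to elements of the domain of $\RP$ — and to notice that the set $C_{\RP}$ need not itself be closed under the approximation, but this is irrelevant since continuity on $C_{\RP}$ only concerns the base point $\PackedTree^\bullet$ lying in $C_{\RP}$.
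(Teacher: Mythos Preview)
Your proof is correct and follows essentially the same approach as the paper's own proof: both extract the appropriate $h(k)$ from the definition of $C_{\RP}$ and then invoke Lemma~\ref{lem:local_reg} to bound $d_p$ by $2^{-k}$. The only cosmetic difference is that the paper phrases the argument via sequential continuity (taking a sequence $\PackedTree^\bullet_n \to \PackedTree^\bullet$) whereas you use the equivalent $\epsilon$--$\delta$ formulation directly.
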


\begin{proof}
	Let $(\PackedTree^{\bullet}_n)_{n\geq 0}$ be a convergent sequence in $(\lufPT,d_t)$ with limit $\PackedTree^{\bullet}\in C_{\RP}.$ Therefore, for all $k>0$,
	\begin{itemize}
		\item there exist $N(k)>0$ such that $d_t(\PackedTree^{\bullet}_n,\PackedTree^{\bullet})\leq 2^{-k},$ for all $n\geq N(k)$ (since $\PackedTree^{\bullet}_n\to\PackedTree^{\bullet})$;
		\item there exists $h(k)>0$ such that $f^{\bullet}_{h(k)}(\PackedTree^{\bullet})$ contains at least $k$ leaves before and $k$ leaves after the distinguished leaf, and has a root decorated with an $\cS$-gadget (since $\PackedTree^{\bullet}\in C_{\RP}$).
	\end{itemize} 
	In particular, for all $k>0,$ setting $N'(k)=N(h(k))$ we have that
	\begin{itemize}
		\item $f_{h(k)}^{\bullet}(\PackedTree^{\bullet}_n)=f_{h(k)}^{\bullet}(\PackedTree^{\bullet})$ for all $n\geq N'(k)$;
		\item $f_{h(k)}^{\bullet}(\PackedTree^{\bullet})$ contains at least $k$ leaves before and $k$ leaves after the distinguished leaf, and has a root decorated with an $\cS$-gadget.
	\end{itemize} 
	\cref{lem:local_reg} implies that, for $n \ge N'(k)$,
	we have $d_p(\RP(\PackedTree^{\bullet}_n),\RP(\PackedTree^{\bullet}))\leq 2^{-k}$.
	Since such a $N'(k)$ exists for all $k>0$,
	we conclude that $\RP(\PackedTree^{\bullet}_n)\to \RP(\PackedTree^{\bullet})$.
	Therefore the function $\RP$ is continuous on $C_{\RP}$, as claimed.
\end{proof}

As a final preparation result for the proof of \cref{thm:local_intro} in the non-separable case,
we show that the limit object $\bm{P}^\bullet_\infty$ is in the continuity set of $\RP$ with probability $1$.
\begin{proposition}
	\label{prop:prob_discontinuity_RP}
	We have $\Proba(\bm{P}^\bullet_\infty \in C_{\RP})=1$.
\end{proposition}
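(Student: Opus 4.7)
The plan is to show that three events each hold almost surely, and their intersection implies $\bm{P}^\bullet_\infty \in C_{\RP}$. By construction of $\bm{P}^\bullet_\infty$, the triples $(\hat{\xi}_i, L_i, \bm{\lambda}_\infty(u_i))_{i \geq 1}$ are independent (and identically distributed), where $\hat{\xi}_i$ is the outdegree of $u_i$, $L_i$ is the position of $u_{i-1}$ among its siblings, and $\bm{\lambda}_\infty(u_i)$ is the decoration of $u_i$, drawn uniformly from $\widehat{\GGG(\SSS)}$ given the outdegree. I plan to apply the second Borel--Cantelli lemma to each of three events concerning these i.i.d.\ triples.

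First I would verify that $\Proba(\bm{\lambda}_\infty(u_i) \in \GGG(\SSS)) > 0$. Since $\mathcal{C}$ is not the separable class, $\SSS$ contains some simple permutation $\alpha$ of size $a \geq 4$; then for every $d \geq a$ there exists at least one $\SSS$-gadget of size $d$ (take $\alpha$ at the root and fill with suitable $\oplus$-children and leaves), so $\mathcal{G}(\SSS)_d/q_d > 0$. As $\Proba(\hat\xi \geq a) > 0$ (because $q_k \geq 1$ for every $k \geq 2$ thanks to the $\circledast_k$ decoration, hence $\Proba(\xi = k) > 0$), we conclude $p_{\SSS} := \Proba(\bm{\lambda}_\infty(u_i) \in \GGG(\SSS)) > 0$. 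By Borel--Cantelli, the set $H := \{h \geq 1 : \bm{\lambda}_\infty(u_h) \in \GGG(\SSS)\}$ is almost surely infinite.

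Next I would handle the two leaf-counting conditions. Let $N_h^L$ (resp.\ $N_h^R$) denote the number of leaves of $f_h^\bullet(\bm{P}^\bullet_\infty)$ lying before (resp.\ after) the distinguished leaf $u_0$. Both sequences are non-decreasing in $h$. Each vertex $u_i$ with $L_i \geq 2$ has at least one sibling of $u_{i-1}$ located to the left, which is the root of an independent unconditioned Galton--Watson tree $\bm{T}$; such a tree is finite almost surely (since $\E[\xi]=1$) but always contains at least one leaf, which lies before $u_0$ and is captured by $f_h^\bullet$ as soon as $h \geq i$. Now $\Proba(L_i \geq 2) \geq \Proba(\hat\xi \geq 2) \cdot 1/2 > 0$, so by Borel--Cantelli, infinitely many $i$ satisfy $L_i \geq 2$, which gives $N_h^L \to \infty$ almost surely. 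An identical argument using the event $\{L_i \leq \hat\xi_i - 1\}$ yields $N_h^R \to \infty$ almost surely.

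Finally I would combine the three events. On the almost sure event $\{H \text{ infinite}\} \cap \{N_h^L \to \infty\} \cap \{N_h^R \to \infty\}$, given any $k > 0$ we can choose $h_0$ with $N_{h_0}^L \geq k$ and $N_{h_0}^R \geq k$, and then pick any $h(k) \in H$ with $h(k) \geq h_0$; monotonicity of $N^L, N^R$ in $h$ ensures the leaf-count conditions hold at $h(k)$, while $h(k) \in H$ gives the $\SSS$-gadget at the root of $f_{h(k)}^\bullet(\bm{P}^\bullet_\infty)$. Hence $\bm{P}^\bullet_\infty \in C_{\RP}$ almost surely. The only step requiring genuine care is the positivity of $p_{\SSS}$, which is where the hypothesis $\SSS \neq \emptyset$ enters the argument.
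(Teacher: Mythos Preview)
Your proof is correct and follows essentially the same route as the paper: both arguments exploit the i.i.d.\ structure of the spine triples $(\hat{\xi}_i, L_i, \bm{\lambda}_\infty(u_i))$ and apply Borel--Cantelli to events of positive probability, with the non-emptiness of $\cS$ being the key input. The only organizational difference is that the paper bundles the conditions ``$u_i$ has a left child'' and ``$u_i$ is decorated by an $\cS$-gadget'' into a single event (so that the $k$-th vertex satisfying both serves directly as $h(k)$), whereas you decouple the leaf-counting from the gadget-counting and combine them afterwards via monotonicity of $N_h^L, N_h^R$ and infiniteness of $H$; this is a cosmetic difference rather than a substantive one.
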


\begin{proof}
	Obviously we can rewrite $\Proba(\bm{P}^\bullet_\infty \in C_{\RP} )$ as
	\begin{equation*}
	\begin{split}
	\Proba\big(\forall k>0,\;\exists h(k)>0\text{ s.t. }f^{\bullet}_{h(k)}(\bm{P}^\bullet_\infty)\text{ contains at least } k \text{ leaves before}
	\text{ and } k \text{ leaves after } u_0,\\
	\text{and has a root decorated with an $\cS$-gadget}\big).
	\end{split}
	\end{equation*}
	Since the problem is symmetric, it is enough to show that for each fixed $k>0,$
	\begin{equation*}
	\begin{split}
	\Proba\big(\exists h(k) >0\text{ s.t. }f^{\bullet}_{h(k)}(\bm{P}^\bullet_\infty)\text{ contains at least } k \text{ leaves before } u_0&\\
	\text{ and has a root decorated with an $\cS$-gadget}&\big)=1.
	\end{split}
	\end{equation*}
	Note that
	\begin{multline*}
	\Proba\big(\exists h(k)>0\text{ s.t. }f^{\bullet}_{h(k)}(\bm{P}^\bullet_\infty)\text{ contains at least } k \text{ leaves before } u_0\\
	\text{ and has a root decorated with an $\cS$-gadget}\big)\\
	\geq \Proba\big(\bm{P}^\bullet_\infty \text{ has at least } k \text{ vertices } u_i \text{ in the infinite spine having at least one left child}\\
	\text{ and an $\cS$-gadget as decoration}\big).
	\end{multline*}
	Here and after, left child means child to the left of the infinite spine. 
	
	By construction, in the infinite tree $\bm{P}^\bullet_\infty$,
	the vertex $u_i$ has at least one left child when $u_{i-1}$ is not identified with its first offspring.
	Conditioned on $u_i$ having $d$ children (which happens with probability $P(\hat{\xi}=d)$),
	this occurs with probability $1-1/d$. Moreover, conditioning on $u_i$ having $d$ children, the probability that $u_{i}$ has an $\cS$-gadget as decoration is equal to $\tfrac{q_d-1}{q_d}$, where we recall that $\mathcal Q(z)=\widehat{\GGG(\SSS)}(z)=\sum_{k\geq 2}q_kz^k$ is the generating in \cref{eq:G}, 
	and that $q_d >1 $ for some $d$ (since we are not treating the case of separable permutations here).
	Therefore, for all $i\geq1$,
	\begin{multline*}
	\Proba\big(u_i \text{ has at least one left child} \\ \text{and is decorated by an $\cS$-gadget} \big)
	=\sum_{d\geq 2}\Proba(\hat{\xi}=d)(1-1/d)\tfrac{q_d-1}{q_d}>0.
	\end{multline*}
	By construction, all these events (for all $i\geq1$) are independent.
	Since they happen with some positive probability independent of $i$,
	a.s.\ at least $k$ of these events hold.
	Consequently, $\bm{P}^\bullet_\infty$ has a.s.\
	at least $k$ vertices $u_i$ in its infinite spine that have at least one left child  and are decorated by an $\cS$-gadget.
	This concludes the proof.
\end{proof}

\subsection{The separable permutations case}
\label{sec:map_for_sep_perm}
For the class of separable permutations, we cannot extend as before the map $\DT^{-1}$ as a function $\RP$ from the metric space of (possibly infinite) locally and upwards finite pointed packed trees $(\lufPT,d_t)$ to the metric space of (possibly infinite) rooted permutations $(\tilde{\SG}_{\bullet},d_p)$. Indeed, every packed tree obtained from a separable permutation contains only $\circledast$-decorations. 

Instead, in this case, we have to consider two different functions $\RP^+$ and $\RP^-$ from the metric space of (possibly infinite) locally and upwards finite pointed rooted trees 
to the metric space of (possibly infinite) rooted permutations.
We first define the maps for finite rooted trees pointed at a leaf (where all internal vertices are thought of as decorated by $\circledast$). 
Let $(T,\ell)$ be such a tree. 
We denote with $(T_{\oplus},\ell)$ (resp. $(T_{\ominus},\ell)$) the pointed canonical tree obtained from $(T,\ell)$ labelling the parent of $\ell$ with $\oplus$ (resp. $\ominus$) 
and then labelling all the other internal vertices in the unique way that prevents the creation of $\oplus-\oplus$ or $\ominus-\ominus$ edges.
Denoting by $i$ the label of the leaf $\ell$ (in the sense of \cref{rk:Leaves_Elements}), we set
\begin{equation}
\label{rp_functions}
\RP^+(T,\ell)\coloneqq (\CanTree^{-1}(T_{\oplus}),i)\quad\text{and}\quad\RP^-(T,\ell)\coloneqq (\CanTree^{-1}(T_{\ominus}),i).
\end{equation}
Finally, given an infinite pointed tree $T^{\bullet}$ we set 
\[\RP^+(T^{\bullet})\coloneqq \lim_{h\to\infty}	\RP^+(f^{\bullet}_h(T^\bullet))\quad\text{and}\quad\RP^-(T^{\bullet})\coloneqq \lim_{h\to\infty}	\RP^-(f^{\bullet}_h(T^\bullet)).\]
where the existence of the two limits is justified using similar arguments to the ones used in \cref{lem:limexistence}.
We now set
\begin{equation}
\begin{split}
C_{\RP^*}:=\big\{T^{\bullet}\in\lufT:\forall k>0,\;\exists h(k)>0\text{ s.t. }f^{\bullet}_{h(k)}(T^{\bullet})\text{ contains at least } k \text{ leaves before}&\\
\text{and $k$ leaves after the distinguished leaf.}&\big\}.
\end{split}
\end{equation}

With very similar arguments to the ones used in \cref{prop:continuity_RP,prop:prob_discontinuity_RP} we have the following.
\begin{proposition}
	\label{prop:cont_bij_sep_case}
	The functions $\RP^+:(\lufT,d_t)\to(\tilde{\SG}_{\bullet},d_p)$ and $\RP^-:(\lufT,d_t)\to(\tilde{\SG}_{\bullet},d_p)$ are continuous on $C_{\RP^*}$. Moreover, $\Proba(\bm{T}^\bullet_\infty \in C_{\RP^*})=1.$
\end{proposition}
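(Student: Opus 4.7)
The plan is to mirror the proofs of Propositions~\ref{prop:continuity_RP} and~\ref{prop:prob_discontinuity_RP}, adapting them to the separable setting where all internal vertices carry only $\circledast$-decorations and the $\oplus/\ominus$ information is recovered from parity instead of from the nearest $\cS$-gadget ancestor.

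First I would establish a separable analogue of Lemma~\ref{lem:local_reg}: if $T_1^\bullet, T_2^\bullet \in \lufT$ satisfy $f_h^\bullet(T_1^\bullet) = f_h^\bullet(T_2^\bullet)$ for some $h>0$, and this common restriction contains at least $k$ leaves before and $k$ leaves after the distinguished leaf, then $d_p(\RP^*(T_1^\bullet), \RP^*(T_2^\bullet)) \le 2^{-k}$ for $* \in \{+,-\}$. The argument is immediate from the definition~\eqref{rp_functions}: once one fixes the decoration of the parent of the distinguished leaf (to $\oplus$ or $\ominus$ according to the choice of $\RP^+$ or $\RP^-$), the rule forbidding $\oplus{-}\oplus$ and $\ominus{-}\ominus$ edges determines the decoration of every other internal vertex in $f_h^\bullet(T_i^\bullet)$ by the parity of its distance to that fixed parent. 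Hence, by the discussion of Section~\ref{ssec:patterns_subtrees} applied to the associated canonical tree, the consecutive pattern $r_k(\RP^*(T_i^\bullet))$ is entirely read from $f_h^\bullet(T_i^\bullet)$ together with the sign $*$, so it coincides for $i=1,2$. Once this lemma is in place, continuity of $\RP^+$ and $\RP^-$ on $C_{\RP^*}$ follows word-for-word as in the proof of Proposition~\ref{prop:continuity_RP}.

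Next I would prove $\Proba(\bm{T}^\bullet_\infty \in C_{\RP^*})=1$. By symmetry and monotonicity in $k$, it suffices to show that almost surely, for every $k$, some restriction $f_h^\bullet(\bm{T}^\bullet_\infty)$ contains at least $k$ leaves strictly to the left of the distinguished leaf $u_0$. This in turn follows if almost surely the spine contains infinitely many vertices $u_i$ such that $u_{i-1}$ is not the leftmost offspring of $u_i$; each such vertex contributes at least one leaf lying to the left of $u_0$ via the Galton--Watson subtree rooted at any left sibling of $u_{i-1}$. In the separable case, $\cS=\emptyset$ gives $\widehat{\GGG(\SSS)}(z) = z^2/(1-z)$, hence $q_d=1$ for all $d \ge 2$, so $\xi$ is supported on $\{0,2,3,\ldots\}$ and the size-biased law $\hat\xi$ on $\{2,3,\ldots\}$. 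Conditionally on $\hat\xi = d \ge 2$, the probability that $u_{i-1}$ is not identified with the leftmost child is $1-1/d \ge 1/2$. Since the spine is built from an i.i.d.\ sequence of copies of $\hat\xi$ and uniform choices of the distinguished offspring, these events are independent across $i$ with probability uniformly bounded below, and Borel--Cantelli yields the claim. (An analogous argument handles the leaves to the right.)

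The only delicate point compared to the non-separable case is the absence of any $\cS$-gadget to anchor the sign, but this is exactly what the two separate maps $\RP^+, \RP^-$ are designed to circumvent: the sign is supplied externally, so the continuity argument goes through using only the shape of a sufficiently large restriction. I do not anticipate any further obstacles.
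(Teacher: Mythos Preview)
Your proposal is correct and follows exactly the approach the paper indicates: the paper's own ``proof'' of Proposition~\ref{prop:cont_bij_sep_case} consists solely of the remark that the arguments of Propositions~\ref{prop:continuity_RP} and~\ref{prop:prob_discontinuity_RP} carry over, and you have spelled out precisely that adaptation, correctly identifying that anchoring the sign at the parent of the distinguished leaf (rather than at an $\cS$-gadget ancestor) removes the need for any root-decoration condition in the analogue of Lemma~\ref{lem:local_reg}.
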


We conclude this section with the following result dealing with the local limit of a uniform canonical tree $\bm{T}_n$ associated with separable permutations, conditioned on having $n$ leaves, 
and where decorations have been removed. 
We note that $\bm T_n$ is distributed as the random packed tree considered in \cref{lem:unif_packed_tree} for the case of separable permutations 
(where decorations, which are all $\circledast$, have also been removed). Therefore all the properties for the offspring distribution $\xi$ are still valid. 
In particular, we remark that $\xi$ has finite variance in the case of separable permutations. 

\begin{proposition}
	\label{prop:quenched_conv_tree_Plus_Signs}
	Let $\bm{T}_n$ be as above and  $\bm{T}^\bullet_\infty$ be the limiting random pointed tree constructed in \cref{sec:limiting_local_trees}. It holds that
	\begin{equation}
	\label{eq:random_meas_conv_sep_case}
	\mathcal{L}\Big(\big((\bm{T}_n,\bm{\ell}_n),(-1)^{\height(\bm{\ell}_n)}\big)|\bm{T}_n\Big)\stackrel{P}{\longrightarrow}\mathcal{L}\big((\bm{T}^\bullet_\infty, \bm{B}_{\pm})\big),
	\end{equation}
	where $\bm{\ell}_n$ is a uniform leaf of $\bm{T}_n$ chosen independently of $\bm{T}_n,$ $\height(\bm{\ell}_n)$ denotes the height of the leaf $\bm{\ell}_n$ and $\bm{B}_{\pm}$ is a Bernoulli random variable on $\{1,-1\}$ independent of $\bm{T}^\bullet_\infty$. 
	
	In particular, $\bm{T}_n$ converges in the quenched Benjamini--Schramm sense 
	to the deterministic measure $\mathcal{L}(\bm{T}^\bullet_\infty)$
	and in the annealed Benjamini--Schramm sense to the random tree $\bm{T}^\bullet_\infty.$  
\end{proposition}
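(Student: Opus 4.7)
The plan is to follow the strategy of the proof of \cref{prop:quenched_conv_tree}, enriched with the parity information supplied by \cref{cor:semilocal3}. As in that proof, the balls $B(T^\bullet,2^{-h})\times\{\epsilon\}$ form a convergence-determining class in the product space $\lufT\times\{-1,+1\}$, so the statement reduces to showing that for every finite pointed plane tree $T^\bullet$ (which we may assume satisfies $f_h^\bullet(T^\bullet)=T^\bullet$) and every $\epsilon\in\{-1,+1\}$,
\begin{equation*}
X_n(T^\bullet,\epsilon) := \Proba\Big(f_h^{\bullet}(\bm{T}_n,\bm{\ell}_n)=T^\bullet,\ (-1)^{\height(\bm{\ell}_n)}=\epsilon \,\Big|\, \bm{T}_n\Big) \stackrel{P}{\longrightarrow} \tfrac{1}{2}\Proba\big(f^\bullet_h(\bm{T}^\bullet_\infty)=T^\bullet\big).
\end{equation*}
Switching the sum over leaves to a sum over their $h$-th ancestors (via the bijection identified in the proof of \cref{prop:quenched_conv_tree}, and noting that the height parity of a leaf differs from that of its $h$-th ancestor by a fixed shift) gives, with $T$ the unpointed version of $T^\bullet$,
\begin{equation*}
X_n(T^\bullet,\epsilon) = \frac{1}{n}\sum_{v\in \bm{T}_n}\One[f(\bm{T}_n,v)=T]\,\One[(-1)^{\height(v)+h}=\epsilon].
\end{equation*}

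I would then establish the convergence in probability of $X_n(T^\bullet,\epsilon)$ by a second-moment argument. For the first moment, $\E[X_n(T^\bullet,\epsilon)]$ is the annealed probability of the event; applying \cref{cor:semilocal3} with $k=1$ and $\Omega=\{0\}$ shows that the parity $(-1)^{\height(\bm{\ell}_n)}$ asymptotically decouples from the local neighborhood around $\bm{\ell}_n$ and converges to the uniform distribution on $\{-1,+1\}$, which together with the annealed convergence of $f_h^\bullet(\bm{T}_n,\bm{\ell}_n)$ to $f_h^\bullet(\bm{T}^\bullet_\infty)$ established in \cref{prop:quenched_conv_tree} yields $\E[X_n(T^\bullet,\epsilon)]\to \tfrac12\Proba(f^\bullet_h(\bm{T}^\bullet_\infty)=T^\bullet)$. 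For the second moment, conditional independence of two uniformly sampled leaves $\bm{\ell}_n^{(1)},\bm{\ell}_n^{(2)}$ given $\bm{T}_n$ gives $\E[X_n(T^\bullet,\epsilon)^2]=\Proba(\mathcal{E}_1\cap\mathcal{E}_2)$, where $\mathcal{E}_i$ denotes the event applied to $\bm{\ell}_n^{(i)}$. By \cref{le:semilocal} with $k=2$ and $\Omega=\{0\}$ the local neighborhoods of the two leaves become asymptotically independent copies of $f_h^\bullet(\bm{T}^\bullet_\infty)$, and by \cref{cor:semilocal3} the two height parities become asymptotically independent Bernoulli$(1/2)$ variables, jointly independent of the skeleton shape. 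Hence $\E[X_n(T^\bullet,\epsilon)^2]\to \tfrac14\Proba(f^\bullet_h(\bm{T}^\bullet_\infty)=T^\bullet)^2 = (\lim_n\E[X_n(T^\bullet,\epsilon)])^2$, so $\Var(X_n(T^\bullet,\epsilon))\to 0$ and Chebyshev's inequality closes the argument. Finally, observing that $\tfrac12\Proba(f^\bullet_h(\bm{T}^\bullet_\infty)=T^\bullet) = \Proba(f^\bullet_h(\bm{T}^\bullet_\infty)=T^\bullet)\Proba(\bm B_\pm = \epsilon)$ identifies the limit measure as $\mathcal{L}(\bm{T}^\bullet_\infty)\otimes\mathcal{L}(\bm B_\pm)$, as required.

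The main subtlety lies in checking that the results of \cref{sec:skeleton} apply in this setting: \cref{le:semilocal} and \cref{cor:semilocal3} require the offspring distribution $\xi$ to be critical, aperiodic, and of finite variance. For separable permutations, $\mathcal{S}=\emptyset$ gives $\mathcal{Q}(z)=z^2/(1-z)$, from which \cref{prop: offspring_distr_charact} yields that $\xi$ is (modulo its atom at $0$) a geometric law, hence critical, aperiodic and of finite variance. Once this verification is done, the only conceptually new ingredient relative to \cref{prop:quenched_conv_tree} is the joint use of \cref{cor:semilocal3} to handle the parity alongside the local shape, with the rest being a routine $L^2$-concentration argument.
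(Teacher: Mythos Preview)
Your proposal is correct and follows essentially the same approach as the paper: reduce to convergence on balls, then apply a second-moment argument where both moments are controlled via \cref{le:semilocal} (with $k=1$ and $k=2$ respectively) together with \cref{cor:semilocal3} for the height parities. The only cosmetic difference is that the paper rewrites the events directly in terms of the skeleton shape $\Sh(c_{\{0\}}\sigma n^{-1/2}.R^{[h]}(\bm{T}_n,\cdot))$ lying in an appropriate set rather than invoking the annealed convergence from \cref{prop:quenched_conv_tree} as an intermediate step, but the substance is identical.
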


We highlight that since we want also to keep track of the parity of the distance between the pointed leaf and the root of the tree, 
\cref{prop:quenched_conv_tree_Plus_Signs} does not follow as a simple adaptation from the proof of \cref{prop:quenched_conv_tree}.

\begin{proof}
	With similar arguments to the ones used in the first part of proof of \cref{prop:quenched_conv_tree}, in order to prove \cref{eq:random_meas_conv_sep_case}, it is enough to show that for a fixed leaf-pointed tree $T^\bullet$, and for any fixed $h$, 
	\begin{equation}
	\label{eq:goal1oftheproof_sep_case}
	\Proba\big(f_h^{\bullet}(\bm{T}_n,\bm{\ell}_n)=f_h^{\bullet}(T^\bullet),(-1)^{\height(\bm{\ell}_n)}=1\big|\bm{T}_n\big)\stackrel{P}{\longrightarrow}\Proba\big(f^\bullet_h(\bm{T}^\bullet_\infty)=f^\bullet_h(T^\bullet),\bm{B}_{\pm}=1\big).
	\end{equation}
	
	Denoting $\mathfrak{L}(\bm{T}_n)$ for the set of leaves of $\bm{T}_n$,
	the left-hand side writes
	\begin{multline}
	\Proba\big(f_h^{\bullet}(\bm{T}_n,\bm{\ell}_n)=f_h^{\bullet}(T^\bullet),(-1)^{\height(\bm{\ell}_n)}=1\big|\bm{T}_n\big)\\
	=\frac{\big|\big\{\ell\in \mathfrak{L}(\bm{T}_n):f_h^{\bullet}(\bm{T}_n,\ell)=f_h^{\bullet}(T^\bullet),(-1)^{\height(\ell)}=1\big\}\big|}{n}=:\bm{N}_{T^\bullet}(n).
	\end{multline}
	\medskip
	
	In order to prove that $\bm{N}_{T^\bullet}(n)\stackrel{P}{\longrightarrow}\Proba\big(f^\bullet_h(\bm{T}^\bullet_\infty)=f^\bullet_h(T^\bullet),\bm{B}_{\pm}=1\big)$,
	we use the Second moment method. We start by studying the first moment, which is
	\begin{equation*}
	\E[\bm{N}_{T^\bullet}(n)]=\Proba\big(f_h^{\bullet}(\bm{T}_n,\bm{\ell}_n)=f_h^{\bullet}(T^\bullet),(-1)^{\height(\bm{\ell}_n)}=1\big).
	\end{equation*}
	Using the notation of \cref{ssec:skeleton_space} and \cref{le:semilocal},
	we can rewrite this probability as follows:
	\begin{multline*}
	\Proba\big(f_h^{\bullet}(\bm{T}_n,\bm{\ell}_n)=f_h^{\bullet}(T^\bullet),(-1)^{\height(\bm{\ell}_n)}=1\big)\\
	=\Proba\big(\Sh(c_{\{0\}}\sigma n^{-1/2}.R^{[h]}(\bm{T}_n,\bm{\ell}_n))\in A_{f_h^{\bullet}(T^\bullet)},(-1)^{\height(\bm{\ell}_n)}=1\big),
	\end{multline*}
	where $A_{f_h^{\bullet}(T^\bullet)}$ is the set of trees $G$ in $\setToh$
	such that at the $h$-th ancestor of the distinguished leaf of $G$
	is equal to $f_h^{\bullet}(T^\bullet)$. 
	
	Using \cref{le:semilocal} with $\Omega=\{0\}, k=1, t=h$ and offspring distribution equal to the one for separable permutations, and the additional result (given by \cref{cor:semilocal3}) that the parity of the height of $\bm{\ell}_n$ converges to a fair coin flip, we have
	\begin{equation*}
	\Proba\big(f_h^{\bullet}(\bm{T}_n,\bm{\ell}_n)=f_h^{\bullet}(T^\bullet),(-1)^{\height(\bm{\ell}_n)}=1\big)\longrightarrow\Proba\big(\Sh(\bm{T}^{1,h}_{\{0\}})\in A_{f_h^{\bullet}(T^\bullet)},\bm{B}_{\pm}=1\big).
	\end{equation*}
	By comparing the construction of $\bm{T}^{1,h}_{\{0\}}$ in \cref{sec:limit_tree}
	and that of $\bm{T}^\bullet_\infty$, we have 
	\[\Proba\big(\Sh(\bm{T}^{1,h}_{\{0\}})\in A_{f_h^{\bullet}(T^\bullet)},\bm{B}_{\pm}=1\big)=\Proba\big(f^\bullet_h(\bm{T}^\bullet_\infty)=f^\bullet_h(T^\bullet),\bm{B}_{\pm}=1\big).\]
	Bringing everything together yields
	\begin{equation}
	\E[\bm{N}_{T^\bullet}(n)] \longrightarrow \Proba\big(f^\bullet_h(\bm{T}^\bullet_\infty)=f^\bullet_h(T^\bullet),\bm{B}_{\pm}=1\big).
	\label{eq:FirstMoment_sep_case}
	\end{equation}
	\medskip
	
	We now study the second moment.
	We have
	\begin{equation*}
	\E[\bm{N}_{T^\bullet}(n)^2]=\Proba\big(f_h^{\bullet}(\bm{T}_n,\bm{\ell}_n)=f_h^{\bullet}(T^\bullet)=f_h^{\bullet}(\bm{T}_n,\bm{g}_n),(-1)^{\height(\bm{\ell}_n)}=1=(-1)^{\height(\bm{g}_n)}\big),
	\end{equation*}
	where $\bm \ell_n$ and $\bm g_n$ are two uniform random leaves of $\bm T_n$, taking independently conditionally on $\bm T_n$.
	Again, using the notation of \cref{ssec:skeleton_space,le:semilocal}, we can rewrite this probability as follows:
	\begin{multline*}
	\Proba\big(f_h^{\bullet}(\bm{T}_n,\bm{\ell}_n)=f_h^{\bullet}(T^\bullet)=f_h^{\bullet}(\bm{T}_n,\bm{g}_n),(-1)^{\height(\bm{\ell}_n)}=1=(-1)^{\height(\bm{g}_n)}\big)\\
	=\Proba\big(\Sh(c_{\{0\}}\sigma n^{-1/2}.R^{[h]}(\bm{T}_n,(\bm{\ell}_n,\bm{g}_n)))\in B_{f_h^{\bullet}(T^\bullet)},(-1)^{\height(\bm{\ell}_n)}=1=(-1)^{\height(\bm{g}_n)}\big),
	\end{multline*}
	where $B_{f_h^{\bullet}(T^\bullet)}$ is the set of $G$ in $\setTth$ 
	such that the two fringe subtrees rooted at the $h$-th ancestors of
	the two distinguished leaves
	are both equal to $f_h^{\bullet}(T^\bullet)$. 
	
	Using again \cref{le:semilocal} with $\Omega=\{0\}, t=h, k=2$ and
	offspring distribution equal to the one for separable permutations, 
	and the additional result (given by \cref{cor:semilocal3}) that the parities of the height of $\bm{\ell}_n$ and $\bm{g}_n$ converges to two independent fair coin flips, we have
	\begin{multline*}
	\Proba\big(f_h^{\bullet}(\bm{T}_n,\bm{\ell}_n)=f_h^{\bullet}(T^\bullet)=f_h^{\bullet}(\bm{T}_n,\bm{g}_n),(-1)^{\height(\bm{\ell}_n)}=1=(-1)^{\height(\bm{g}_n)}\big)\\
	\longrightarrow\Proba\big(\Sh(\bm{T}^{2,h}_{\{0\}})\in B_{f_h^{\bullet}(T^\bullet)},\bm{B}^1_{\pm}=1,\bm{B}^2_{\pm}=1\big),
	\end{multline*}
	where $\bm{B}^i_{\pm},$ for $i=1,2,$ are two independent copies of $\bm{B}_{\pm}.$
	By construction, in $\bm{T}^{2,h}_{\{0\}}$ the neighbourhoods of the two distinguished vertices (here leaves, since $\Omega=\{0\}$)
	are taken independently so that
	\[\Proba\big(\Sh(\bm{T}^{2,h}_{\{0\}})\in B_{f_h^{\bullet}(T^\bullet)},\bm{B}^1_{\pm}=1,\bm{B}^2_{\pm}=1\big) =
	\Proba\big(f^\bullet_h(\bm{T}^\bullet_\infty)=f^\bullet_h(T^\bullet),\bm{B}_{\pm}=1\big)^2.\]
	Bringing everything together,
	\begin{equation}
	\E[\bm{N}_{T^\bullet}(n)^2] \longrightarrow \Proba\big(f^\bullet_h(\bm{T}^\bullet_\infty)=f^\bullet_h(T^\bullet),\bm{B}_{\pm}=1\big)^2.
	\label{eq:SecondMoment_sep_case}
	\end{equation}
	\medskip
	
	Comparing \cref{eq:FirstMoment_sep_case,eq:SecondMoment_sep_case} and using the standard second moment method, we conclude that
	\begin{equation*}
	\bm{N}_{T^\bullet}(n)\stackrel{P}{\longrightarrow}\Proba\big(f^\bullet_h(\bm{T}^\bullet_\infty)=f^\bullet_h(T^\bullet),\bm{B}_{\pm}=1\big).
	\end{equation*}
	Indeed by Chebyschev's inequality, one has, for any fixed $\varepsilon>0,$
	\begin{equation*}
	\Proba\Big(\big|	\bm{N}_{T^\bullet}(n)-\E\big[\bm{N}_{T^\bullet}(n)\big]\big|\geq\varepsilon\Big)\leq\frac{1}{\varepsilon^2}\cdot\text{Var}\big(\bm{N}_{T^\bullet}(n)\big),
	\end{equation*}
	and the right-hand side tends to zero.
\end{proof}

\subsection{Local limit of uniform permutations in substitution-closed classes}

We now prove a quenched Benjamini--Schramm convergence result
for uniform random permutations in a proper substitution-closed class $\mathcal C$.
As we shall see at the end of the section, this implies our second main result (\cref{thm:local_intro}).

\begin{theorem}
	\label{thm:quenched_BS_cv}
	Let $\bm{\nu}_n$ be a uniform permutation of size $n$ in a proper substitution-closed class $\mathcal{C}$, for all $n\in\NN$. 
	If $\mathcal{C}$ is the class of separable permutations, then
	\begin{align}\bm{\nu}_n\stackrel{qBS}{\longrightarrow}\mathcal{L}\big(\RP^{\bm{B}_{\pm}}(\bm{T}^\bullet_\infty)\big)\quad\text{and}\quad \bm{\nu}_n\stackrel{aBS}{\longrightarrow}\RP^{\bm{B}_{\pm}}(\bm{T}^\bullet_\infty).
	\label{eq:local_limit_sep}
	\end{align}
	If the set $\cS$ of simple permutations in $\cC$ is non-empty and the criticality condition
	\begin{align}
	\cS'(\rho_\cS) \ge \frac{2}{(1 +\rho_\cS)^2} -1
	\end{align}
	is satisfied, then
	\begin{align}
	\bm{\nu}_n\stackrel{qBS}{\longrightarrow}\mathcal{L}\big(\RP(\bm{P}^\bullet_\infty)\big)\quad\text{and}\quad \bm{\nu}_n\stackrel{aBS}{\longrightarrow}\RP(\bm{P}^\bullet_\infty).
	\label{eq:local_lim_nonsep}
	\end{align}
\end{theorem}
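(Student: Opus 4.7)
The plan is to reduce the problem to a quenched local limit for random packed trees with a uniform distinguished leaf, and then push this limit forward through the (continuous extension of the) inverse bijection $\RP$ (or $\RP^{\pm}$ in the separable case).

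First I would reduce to the $\oplus$-indecomposable case via Proposition~\ref{prop:giant_comp_perm}: the $\oplus$-decomposition of $\bm{\nu}_n$ has a giant component $\bm{\nu}^{(\bm m)}$ of size $n - O_p(1)$, uniform conditionally on its size among $\oplus$-indecomposable permutations in $\mathcal{C}$, while the remaining fragments have stochastically bounded total size. Hence, with probability tending to one, a uniform root $\bm i_n \in [n]$ lies in the block corresponding to $\bm{\nu}^{(\bm m)}$ and, conditionally, is uniform on this block. Because $\oplus$-substitution does not mix blocks, for any fixed window radius $h$ the restriction $r_h(\bm{\nu}_n, \bm i_n)$ coincides with $r_h(\bm{\nu}^{(\bm m)}, \bm i'_n)$ with probability tending to one (using that other fragments have bounded total size, so $\bm i_n$ lies at distance $\gg h$ from the endpoints of its block). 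This reduces both the annealed and the quenched statements to the analogous statements for the uniform $\oplus$-indecomposable permutation $\bm{\nu}^{(\bm m)}$.

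For the non-separable case, Lemma~\ref{le:bij_perm_tree} together with Remark~\ref{rk:Leaves_Elements} identifies the uniform $\oplus$-indecomposable permutation with distinguished position and the uniform packed tree with distinguished leaf, and the identification is $\RP$. Proposition~\ref{prop:quenched_conv_tree} provides the quenched convergence $\mathcal{L}((\bm{P}_N, \bm{\ell}_N) \mid \bm{P}_N) \stackrel{P}{\longrightarrow} \mathcal{L}(\bm{P}^\bullet_\infty)$. Since $\RP$ is continuous on $C_\RP$ (Proposition~\ref{prop:continuity_RP}) and $\mathcal{L}(\bm{P}^\bullet_\infty)(C_\RP) = 1$ (Proposition~\ref{prop:prob_discontinuity_RP}), the Portmanteau theorem implies that the pushforward $\mu \mapsto \RP_* \mu$ is weakly continuous at $\mathcal{L}(\bm{P}^\bullet_\infty)$. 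A standard continuous-mapping argument for random probability measures then transfers the in-probability weak convergence to
\[
\mathcal{L}\big((\bm{\nu}^{(\bm m)}, \bm i'_n) \mid \bm{\nu}^{(\bm m)}\big) \stackrel{P}{\longrightarrow} \mathcal{L}\big(\RP(\bm{P}^\bullet_\infty)\big),
\]
which, combined with the reduction, gives~\eqref{eq:local_lim_nonsep}. The separable case is analogous, except that the decorations on the packed tree are uninformative: to recover the canonical tree one must additionally know the parity of the height of the pointed leaf. This is exactly the content of Proposition~\ref{prop:quenched_conv_tree_Plus_Signs}, which provides the joint quenched convergence of $((\bm{T}_N,\bm{\ell}_N),(-1)^{\height(\bm{\ell}_N)})$ to $(\bm{T}^\bullet_\infty, \bm{B}_\pm)$. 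Propagating this through the maps $\RP^\pm$, continuous on the full-measure set $C_{\RP^*}$ by Proposition~\ref{prop:cont_bij_sep_case}, yields~\eqref{eq:local_limit_sep}.

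The main obstacle is the continuous-mapping step at the level of conditional distributions. Namely, one needs the following statement: if a sequence of random probability measures $\bm{\mu}_N$ on a Polish space converges in probability (with respect to the topology of weak convergence) to a deterministic measure $\mu_\infty$, and if a measurable map $\Phi$ is continuous $\mu_\infty$-almost everywhere, then the pushforwards $\Phi_* \bm{\mu}_N$ converge in probability to $\Phi_* \mu_\infty$. This is essentially routine (one combines the Portmanteau theorem, which gives continuity of $\Phi_*$ at $\mu_\infty$ for weak convergence, with Skorokhod's representation for the in-probability convergence), but the subtlety is that $\lufPT$ and $\lufT$ are only Polish, not compact, so one cannot simply invoke compactness arguments. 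A secondary, more concrete nuisance is that the index of the giant block is itself random; one handles this by conditioning on $|\bm{\nu}^{(\bm m)}|$ (on which the giant is uniform among $\oplus$-indecomposable permutations of that size), noting that the required convergence for the uniform packed tree holds along every size going to infinity and is therefore preserved.
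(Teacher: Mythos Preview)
Your proposal is correct and follows essentially the same route as the paper: reduce to the $\oplus$-indecomposable case via \cref{prop:giant_comp_perm}, identify the rooted permutation with the pointed packed tree through $\RP$ (resp.\ $\RP^{\pm}$ together with the parity of the height in the separable case), invoke the quenched tree convergence of \cref{prop:quenched_conv_tree} (resp.\ \cref{prop:quenched_conv_tree_Plus_Signs}), and push forward using the almost-sure continuity of $\RP$ at the limit (\cref{prop:continuity_RP,prop:prob_discontinuity_RP}, resp.\ \cref{prop:cont_bij_sep_case}). The paper handles the continuous-mapping step for random measures by citing Kallenberg's \cite[Thm.~4.11, Lem.~4.12]{kallenberg2017random} rather than the Portmanteau/Skorokhod argument you sketch, but the content is the same; your treatment of the reduction to the giant component and the random size issue is in fact more explicit than the paper's.
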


Like after \cref{prop:quenched_conv_tree}, we want to emphasize the nature of the limiting objects above. 
The limit $\mathcal{L}\big(\RP(\bm{P}^\bullet_\infty)\big)$ (resp. the limit $\mathcal{L}\big(\RP^{\bm{B}_{\pm}}(\bm{T}^\bullet_\infty)\big)$) is a measure on $\Sri$.
Since the limiting object for the quenched Benjamini--Schramm convergence is in general a random measure on $\Sri$,
it should be interpreted as a constant random variable, 
equal to the measure $\mathcal{L}\big(\RP(\bm{P}^\bullet_\infty)\big)$ (resp. $\mathcal{L}\big(\RP^{\bm{B}_{\pm}}(\bm{T}^\bullet_\infty)\big)$).
\begin{proof}
	We only need to prove the quenched convergence statements,
	the annealed versions being a simple consequence of the quenched one (see \cite[Proposition 2.35]{Borga2019}).
	Moreover, thanks to \cref{prop:giant_comp_perm},
	it is sufficient to prove the statement for a uniform $\oplus$-indecomposable
	permutation $\bm\nu_n$.

	We first consider the case when $\mathcal{C}$ is a proper substitution-closed class different from the class of separable permutations.
	Consider a uniform random leaf $\bm{\ell}_{n}$ in $\bm{P}_{n}$
	and a uniform random element $\bm i_{n}$ in $\bm{\nu}_n$.
	We have the following equality in distribution
	(recall that $\RP$ denotes the          
	extension of the function $(\Pack\circ\CanTree)^{-1}$ to rooted permutations):
	\begin{equation}
	\big(\bm{\nu}_n,\bm i_n\big) \stackrel{d}{=}\RP(\bm{P}_n,\bm{\ell}_n).
	\label{eq:randomNonPlus_Tree}
	\end{equation}
	
	We analyse the right-hand side conditionally on $\bm{P}_{n}$.
	By \cref{prop:quenched_conv_tree}, we know that
	\begin{equation*}
	\mathcal{L}\big((\bm{P}_n,\bm{\ell}_n)|\bm{P}_n\big)\stackrel{P}{\longrightarrow}\mathcal{L}(\bm{P}^\bullet_\infty).
	\end{equation*}
	Moreover, by \cref{prop:continuity_RP,prop:prob_discontinuity_RP}, $\RP$ is almost surely continuous at $\bm{P}^\bullet_\infty$. 
	Therefore, using a combination of the results stated in \cite[Theorem 4.11, Lemma 4.12]{kallenberg2017random}\footnote{\label{footnote:KAL}The specific result that we need is a generalization of the \emph{mapping theorem} for random measures: Let $(\bm{\mu}_n)_{n\in\NN}$ be a sequence of random measures on a space $E$ that converges in distribution to a random measure $\bm{\mu}$ on $E.$ Let $F$ be a function from $E$ to a second space $H$ such that the set $D_F$ of discontinuity points of $F$ has measure $\bm{\mu}(D_F)=0$ a.s.. Then the sequence of pushforward random measures $(\bm{\mu}_n\circ F^{-1})_{n\in\NN}$ converges in distribution to the pushforward random measure $\bm{\mu}\circ F^{-1}.$},
	\begin{equation*}
	\mathcal{L}\big(\RP(\bm{P}_{n},\bm{\ell}_{n})|\bm{P}_{n}\big)\stackrel{P}{\longrightarrow}\mathcal{L}\big(\RP(\bm{P}^\bullet_\infty)\big).
	\end{equation*}
	Note that the result described in footnote~\ref{footnote:KAL} gives convergence \emph{in distribution}; 
	the limit being a deterministic measure, convergence in probability follows. 
	
	Comparing with \cref{eq:randomNonPlus_Tree}, we have that
	\[\mathcal{L}\big( (\bm{\nu}_n,\bm i_n) | \bm{\nu}_n \big) 
	\stackrel{P}{\longrightarrow}\mathcal{L}\big(\RP(\bm{P}^\bullet_\infty)\big),\]
	which is the quenched convergence in \cref{eq:local_lim_nonsep}.
	\medskip
	
	It remains to prove the theorem for the class of separable permutations.
	In this case, we have the following equality in distribution (recall that $\RP^+$ and $\RP^-$ are the maps defined in \cref{rp_functions})
	\begin{equation}
	\big(\bm\nu_n,\bm i_n\big) \stackrel{d}{=}\RP^{\sgn(\bm \ell)}(\bm{T}_{n},\bm{\ell}_{n}),
	\label{eq:randomNonPlus_Tree_sep_case}
	\end{equation}
	where $\bm{T}_{n}$ is a uniform undecorated canonical tree with $n$ leaves, 
	$\bm{\ell}_{n}$ is a uniform leaf of $\bm{T}_{n}$ and
	$\sgn(\bm \ell)$ is the sign $(-1)^{\height(\bm \ell)}$. 
	
	We analyse the right-hand side conditionally on $\bm{T}_n$.
	By \cref{prop:quenched_conv_tree_Plus_Signs}, we know that
	\begin{equation}
	\mathcal{L}\Big(\big((\bm{T}_n,\bm{\ell}_n),(-1)^{\height(\bm{\ell}_n)}\big)|\bm{T}_n\Big)\stackrel{P}{\longrightarrow}\mathcal{L}\big((\bm{T}^\bullet_\infty, \bm{B}_{\pm})\big).
	\end{equation}
	Moreover, by \cref{prop:cont_bij_sep_case} $\RP^{+}$ and $\RP^{-}$ are almost surely continuous at $(\bm{T}^\bullet_\infty, \bm{B}_{\pm})$. Therefore, using again a combination of the results stated in \cite[Theorem 4.11, Lemma 4.12]{kallenberg2017random} 
	\begin{equation*}
	\mathcal{L}\big(\RP^{\sgn(\bm{\ell}_{n})}(\bm{T}_{n},\bm{\ell}_{n})|\bm{T}_{n}\big)\stackrel{P}{\longrightarrow}\mathcal{L}\big(\RP^{\bm{B}_{\pm}}(\bm{T}^\bullet_\infty)\big).
	\end{equation*}
	Comparing with \cref{eq:randomNonPlus_Tree_sep_case}, we have that
	\[\mathcal{L}\big( (\bm{\nu}_n,\bm i_{n}) | \bm{\nu}_n \big)
	\stackrel{P}{\longrightarrow}\mathcal{L}\big(\RP^{\bm{B}_{\pm}}(\bm{T}^\bullet_\infty)\big),\]
	which is exactly the quenched convergence statement in \cref{eq:local_limit_sep}.
\end{proof}

\begin{proof}
	[Proof of \cref{thm:local_intro}]
	With the assumption of \cref{thm:local_intro}, we just proved (\cref{thm:quenched_BS_cv})
	that a uniform permutation $\bm \nu_n$ in $\mathcal C$
	converges in the quenched Benjamini--Schramm sense to some deterministic measure $\mathcal{L}\big(\bm \nu_\infty)$.
	As recalled in \cref{thm:local_conv_perm_charact} above,
	the quenched Benjamini--Schramm convergence imply the (joint) convergence
	of the random variables $\widetilde{\coc}(\pi,\bm{\nu}_n)$ to some random variables $\bm\Lambda_\pi$.
	Additionally, since the quenched Benjamini--Schramm limit is a deterministic measure,
	the random variable $\bm\Lambda_\pi$ are deterministic as well (see \cite[Corollary 2.38]{Borga2019}),
	\emph{i.e.,} they are numbers $\gamma_{\pi,\mathcal C}$ in $[0,1]$.
	This concludes the proof.
\end{proof}
\begin{remark}
	\label{rk:gammas}
	Concretely $\gamma_{\pi,\mathcal C}$ is the probability 
	that the restriction of the random order $\RP(\bm{P}^\bullet_\infty)$ 
	(or, in the case of separable permutations, $\RP^{\bm{B}_{\pm}}(\bm{T}^\bullet_\infty)$)
	on a fixed integer interval of size $|\pi|$
	(e.g. $[0,|\pi|-1]$) is equal to $\pi$ (after the identification between permutations and total order on intervals
	given in \cref{ssec:reminder_local_limits}).
	Computing this number involves a sum over countably many configurations of $\bm{P}^\bullet_\infty$
	and so it is not immediate, even for simple classes $\mathcal C$ and short patterns $\pi$.
\end{remark}



\begin{thebibliography}{10}
	
	\bibitem{MR3803914}
	R.~Abraham, J.-F. Delmas, and H.~Guo, \emph{Critical multi-type
		{G}alton-{W}atson trees conditioned to be large}, J. Theoret. Probab.
	\textbf{31} (2018), no.~2, 757--788. \MR{3803914}
	
	\bibitem{albert2005simple}
	M.~Albert and M.~Atkinson, \emph{Simple permutations and pattern restricted
		permutations}, Discrete Math. \textbf{300} (2005), no.~1, 1--15.
	
	\bibitem{MR1102319}
	D.~Aldous, \emph{Asymptotic fringe distributions for general families of random
		trees}, Ann. Appl. Probab. \textbf{1} (1991), no.~2, 228--266. \MR{1102319
		(92j:60009)}
	
	\bibitem{MR1085326}
	\bysame, \emph{The continuum random tree. {I}}, Ann. Probab. \textbf{19}
	(1991), no.~1, 1--28. \MR{1085326 (91i:60024)}
	
	\bibitem{MR1166406}
	\bysame, \emph{The continuum random tree. {II}. {A}n overview}, Stochastic
	analysis ({D}urham, 1990), London Math. Soc. Lecture Note Ser., vol. 167,
	Cambridge Univ. Press, Cambridge, 1991, pp.~23--70. \MR{1166406 (93f:60010)}
	
	\bibitem{MR1207226}
	\bysame, \emph{The continuum random tree. {III}}, Ann. Probab. \textbf{21}
	(1993), no.~1, 248--289. \MR{1207226 (94c:60015)}
	
	\bibitem{bassino2017universal}
	F.~Bassino, M.~Bouvel, V.~F{\'e}ray, L.~Gerin, M.~Maazoun, and A.~Pierrot,
	\emph{Universal limits of substitution-closed permutation classes}, Preprint
	arXiv:1706.08333, to appear in {\em J. Eur. Math. Soc.}, 2017.
	
	\bibitem{bassino2019finitelyGenerated}
	\bysame, \emph{Scaling limits of permutation classes with a finite
		specification: a dichotomy}, Preprint arXiv:1903.07522, 2019.
	
	\bibitem{MR3813988}
	F.~Bassino, M.~Bouvel, V.~F\'{e}ray, L.~Gerin, and A.~Pierrot, \emph{The
		{B}rownian limit of separable permutations}, Ann. Probab. \textbf{46} (2018),
	no.~4, 2134--2189.
	
	\bibitem{benjamini2001recurrence}
	I.~Benjamini and O.~Schramm, \emph{Recurrence of distributional limits of
		finite planar graphs}, Electronic Journal of Probability \textbf{6} (2001).
	
	\bibitem{MR3748121}
	G.~H. Berzunza~Ojeda, \emph{On scaling limits of multitype {G}alton-{W}atson
		trees with possibly infinite variance}, ALEA Lat. Am. J. Probab. Math. Stat.
	\textbf{15} (2018), no.~1, 21--48. \MR{3748121}
	
	\bibitem{bevanThesis}
	D.~I. Bevan, \emph{On the growth of permutation classes}, Ph.D. thesis, The
	Open University, \url{https://arxiv.org/abs/1506.06688}, 2015.
	
	\bibitem{Borga2019}
	J.~Borga, \emph{Local convergence for permutations and local limits for uniform
		{$\rho$}-avoiding permutations with {$|\rho|=3$}}, Probab. Theory Related
	Fields \textbf{176} (2020), no.~1-2, 449--531. \MR{4055194}
	
	\bibitem{borga2019almost}
	J.~Borga, E.~Duchi, and E.~Slivken, \emph{Almost square permutations are
		typically square},  (2019), Preprint arXiv:1910.04813.
	
	\bibitem{borga2019square}
	J.~Borga and E.~Slivken, \emph{Square permutations are typically rectangular},
	Preprint arXiv:1904.03080, to appear in {\em Annals of Applied Probability},
	2019.
	
	\bibitem{PerfectSorting}
	M.~Bouvel, C.~Chauve, M.~Mishna, and D.~Rossin, \emph{Average-case analysis of
		perfect sorting by reversals}, Discrete Mathematics, Algorithms and
	Applications \textbf{3} (2011), no.~3, 369--392.
	
	\bibitem{MR0348393}
	J.~Chover, P.~Ney, and S.~Wainger, \emph{Functions of probability measures}, J.
	Analyse Math. \textbf{26} (1973), 255--302. \MR{0348393}
	
	\bibitem{deraphelis2017}
	L.~de~Raph\'{e}lis, \emph{{Scaling limit of multitype {G}alton--{W}atson trees
			with infinitely many types}}, Ann. Inst. H. Poincaré Probab. Statist.
	\textbf{53} (2017), no.~1, 200--225.
	
	\bibitem{MR2829308}
	L.~Devroye and S.~Janson, \emph{Distances between pairs of vertices and
		vertical profile in conditioned {G}alton-{W}atson trees}, Random Structures
	Algorithms \textbf{38} (2011), no.~4, 381--395. \MR{2829308}
	
	\bibitem{MR3238333}
	T.~Dokos and I.~Pak, \emph{The expected shape of random doubly alternating
		{B}axter permutations}, Online J. Anal. Comb. \textbf{9} (2014), 12.
	\MR{3238333}
	
	\bibitem{ehrenborg1994schroder}
	R.~Ehrenborg and M.~M{\'e}ndez, \emph{Schr{\"o}der parenthesizations and
		chordates}, Journal of Combinatorial Theory, Series A \textbf{67} (1994),
	no.~2, 127--139.
	
	\bibitem{MR3097424}
	S.~Foss, D.~Korshunov, and S.~Zachary, \emph{An introduction to heavy-tailed
		and subexponential distributions}, second ed., Springer Series in Operations
	Research and Financial Engineering, Springer, New York, 2013. \MR{3097424}
	
	\bibitem{MR0062975}
	B.~V. Gnedenko and A.~N. Kolmogorov, \emph{Limit distributions for sums of
		independent random variables}, Addison-Wesley Publishing Company, Inc.,
	Cambridge, Mass., 1954, Translated and annotated by K. L. Chung. With an
	Appendix by J. L. Doob. \MR{0062975}
	
	\bibitem{MR3632417}
	C.~Hoffman, D.~Rizzolo, and E.~Slivken, \emph{Pattern-avoiding permutations and
		{B}rownian excursion {P}art {I}: shapes and fluctuations}, Random Structures
	Algorithms \textbf{50} (2017), no.~3, 394--419. \MR{3632417}
	
	\bibitem{MR3704772}
	\bysame, \emph{Pattern-avoiding permutations and {B}rownian excursion, part
		{II}: fixed points}, Probab. Theory Related Fields \textbf{169} (2017),
	no.~1-2, 377--424. \MR{3704772}
	
	\bibitem{MR3894923}
	\bysame, \emph{Fixed points of 321-avoiding permutations}, Proc. Amer. Math.
	Soc. \textbf{147} (2019), no.~2, 861--872. \MR{3894923}
	
	\bibitem{HRSinpreparation}
	\bysame, \emph{Scaling limits of permutations avoiding long decreasing
		sequences}, Preprint arXiv:1911.04982, 2019.
	
	\bibitem{holmgren2017}
	C.~Holmgren and S.~Janson, \emph{Fringe trees, crump–mode–jagers branching
		processes and $m$-ary search trees}, Probab. Surveys \textbf{14} (2017),
	53--154.
	
	\bibitem{MR2995721}
	C.~Hoppen, Y.~Kohayakawa, C.~G. Moreira, B.~R\'{a}th, and R.~Menezes~Sampaio,
	\emph{Limits of permutation sequences}, J. Combin. Theory Ser. B \textbf{103}
	(2013), no.~1, 93--113. \MR{2995721}
	
	\bibitem{MR2245498}
	S.~Janson, \emph{Random cutting and records in deterministic and random trees},
	Random Structures Algorithms \textbf{29} (2006), no.~2, 139--179.
	\MR{2245498}
	
	\bibitem{janson2012simply}
	\bysame, \emph{Simply generated trees, conditioned {G}alton--{W}atson trees,
		random allocations and condensation}, Probability Surveys \textbf{9} (2012),
	103--252.
	
	\bibitem{doi:10.1002/rsa.20806}
	\bysame, \emph{Patterns in random permutations avoiding the pattern 321},
	Random Structures \& Algorithms \textbf{55} (2019), no.~2, 249--270.
	
	\bibitem{Ja_multiple}
	\bysame, \emph{Patterns in random permutations avoiding some sets of multiple
		patterns}, Algorithmica \textbf{82} (2020), no.~3, 616--641.
	
	\bibitem{MR2764126}
	T.~Jonsson and S.~Stef{\'a}nsson, \emph{Condensation in nongeneric trees}, J.
	Stat. Phys. \textbf{142} (2011), no.~2, 277--313. \MR{2764126 (2012g:60312)}
	
	\bibitem{kallenberg2017random}
	O.~Kallenberg, \emph{Random measures, theory and applications}, Springer, 2017.
	
	\bibitem{Kersting}
	G.~Kersting, \emph{On the height profile of a conditioned galton-watson tree},
	arXiv preprint arXiv:1101.3656 (2011).
	
	\bibitem{MR2946438}
	I.~Kortchemski, \emph{Invariance principles for {G}alton-{W}atson trees
		conditioned on the number of leaves}, Stochastic Process. Appl. \textbf{122}
	(2012), no.~9, 3126--3172. \MR{2946438}
	
	\bibitem{MR3335012}
	\bysame, \emph{Limit theorems for conditioned non-generic {G}alton-{W}atson
		trees}, Ann. Inst. Henri Poincar\'e Probab. Stat. \textbf{51} (2015), no.~2,
	489--511. \MR{3335012}
	
	\bibitem{MR642392}
	G.~Labelle, \emph{Une nouvelle d\'emonstration combinatoire des formules
		d'inversion de {L}agrange}, Adv. in Math. \textbf{42} (1981), no.~3,
	217--247. \MR{642392 (83e:05016)}
	
	\bibitem{maazoun17BrownianPermuton}
	M.~Maazoun, \emph{On the brownian separable permuton}, Combinatorics,
	Probability and Computing \textbf{29} (2020), no.~2, 241--266.
	
	\bibitem{MR2063960}
	A.~Marcus and G.~Tardos, \emph{Excluded permutation matrices and the
		{S}tanley-{W}ilf conjecture}, J. Combin. Theory Ser. A \textbf{107} (2004),
	no.~1, 153--160. \MR{2063960}
	
	\bibitem{MR2469338}
	G.~Miermont, \emph{Invariance principles for spatial multitype
		{G}alton-{W}atson trees}, Ann. Inst. Henri Poincar\'{e} Probab. Stat.
	\textbf{44} (2008), no.~6, 1128--1161. \MR{2469338}
	
	\bibitem{MR3176717}
	S.~Miner and I.~Pak, \emph{The shape of random pattern-avoiding permutations},
	Adv. in Appl. Math. \textbf{55} (2014), 86--130. \MR{3176717}
	
	\bibitem{pinsky}
	R.~G. Pinsky, \emph{The infinite limit of random permutations avoiding patterns
		of length three}, Combinatorics, Probability and Computing \textbf{29}
	(2020), no.~1, 137--152.
	
	\bibitem{MR3378819}
	J.~Pitman and D.~Rizzolo, \emph{{S}chr\"{o}der's problems and scaling limits of
		random trees}, Trans. Amer. Math. Soc. \textbf{367} (2015), no.~10,
	6943--6969. \MR{3378819}
	
	\bibitem{MR3335013}
	D.~Rizzolo, \emph{Scaling limits of {M}arkov branching trees and
		{G}alton-{W}atson trees conditioned on the number of vertices with out-degree
		in a given set}, Ann. Inst. Henri Poincar\'e Probab. Stat. \textbf{51}
	(2015), no.~2, 512--532. \MR{3335013}
	
	\bibitem{MR3769811}
	R.~Stephenson, \emph{Local convergence of large critical multi-type
		{G}alton-{W}atson trees and applications to random maps}, J. Theoret. Probab.
	\textbf{31} (2018), no.~1, 159--205. \MR{3769811}
	
	\bibitem{stufler2016limits}
	B.~Stufler, \emph{Limits of random tree-like discrete structures}, Preprint
	arXiv:1612.02580, 2016.
	
	\bibitem{stufler2016gibbs}
	\bysame, \emph{Gibbs partitions: The convergent case}, Random Structures \&
	Algorithms \textbf{53} (2018), no.~3, 537--558.
	
	\bibitem{StEJC2018}
	\bysame, \emph{Random enriched trees with applications to random graphs},
	Electronic Journal of Combinatorics \textbf{25} (2018), no.~3.
	
	\bibitem{stufler2019}
	\bysame, \emph{Local limits of large {G}alton–{W}atson trees rerooted at a
		random vertex}, Ann. Inst. H. Poincaré Probab. Statist. \textbf{55} (2019),
	no.~1, 155--183.
	
	\bibitem{stufler2019offspring}
	\bysame, \emph{On the maximal offspring in a subcritical branching process},
	Preprint arXiv:1901.04603, 2019.
	
\end{thebibliography}


\providecommand{\bysame}{\leavevmode\hbox to3em{\hrulefill}\thinspace}
\providecommand{\MR}{\relax\ifhmode\unskip\space\fi MR }
\providecommand{\MRhref}[2]{%
	\href{http://www.ams.org/mathscinet-getitem?mr=#1}{#2}
}
\providecommand{\href}[2]{#2}


\ACKNO{JB and MB are partially supported by the Swiss National Science Foundation, under grant number 200021-172536.}


\end{document}